\documentclass[reqno,11pt]{amsart}
\calclayout
\usepackage{amsmath,amsthm,amssymb,amsbsy}
\usepackage{amsfonts,amstext,amscd}
\usepackage{babel}
\usepackage{graphicx}
\usepackage{bm}
\usepackage{commath,mathtools,setspace}
\usepackage{paralist}
\usepackage{mdwlist}
\usepackage{color}
\usepackage[T1]{fontenc}
\usepackage{geometry}
\usepackage[colorlinks=true,linkcolor=blue,citecolor=blue]{hyperref}
\usepackage[colorinlistoftodos,prependcaption,textsize=small]{todonotes}
\definecolor{red}{RGB}{255,0,0}
\definecolor{green}{RGB}{0,100,0}
\definecolor{blue}{RGB}{0,0,255}
\newtheorem{theorem}{Theorem}[section]

\newtheorem{lemma}[theorem]{Lemma}

\newtheorem{corollary}[theorem]{Corollary}
\newtheorem{proposition}[theorem]{Proposition}
\newtheorem{notation}[theorem]{Notation}

\newcommand{\sign}[1]{\mathop{\rm sign} \left(#1 \right)}
\theoremstyle{remark}
\newtheorem{remark}[theorem]{Remark}
\newtheorem{definition}[theorem]{Definition}
\newtheorem{example}[theorem]{Example}

\newcommand{\D}{\mathfrak{d}}

\newcommand{\lmesh}{\mathop{\rm lmesh}}
\newcommand*\pQq[6]{%
 {\ }_{#1}{\mathcal \phi}_{#2}{\left(\genfrac..{0pt}{}{#3}{#4};#5,#6\right)}%
}

\newcommand{\dil}[1]{\text{Dil}_{#1}}

\renewcommand{\P}{\mathbb{P}}

\newcommand{\R}{\mathbb{R}}
\newcommand{\C}{\mathbb{C}}
\newcommand{\Z}{\mathbb{Z}}

\newcommand{\N}{\mathbb{N}}

\newcommand{\qraising}[3]{\left(#1;#2\right)_{#3}}

\newcommand{\monicpols}{\mathbb P_n^*}
\newcommand{\monicpolreal}{\mathbb P_n^*(\mathbb R)}

\newcommand{\pols}{\mathbb P_n}

\newcommand{\nn}{\mathbb{N}}

\newcommand{\pp}{\mathbb{P}}

\newcommand{\rr}{\mathbb{R}}

\definecolor{apricot}{rgb}{0.98, 0.81, 0.69}

\definecolor{pastelblue}{rgb}{0.784, 0.902, 0.980}

\definecolor{greenpie}{rgb}{0.69, 0.95, 0.76}

\newcommand{\raising}[2]{\left(#1\right)^{\overline{#2}}}

\newcommand{\coef}[2]{e_{#1}\left(#2\right)}

\newcommand{\qsym}[1]{\mathrm{qSym}\left(#1\right)}

\newmuskip\pFqmuskip

\newcommand*\pFqN[6][8]{%
  \begingroup 
  \pFqmuskip=#1mu\relax
  \mathcode`\,=\string"8000
  \begingroup\lccode`\~=`\,
  \lowercase{\endgroup\let~}\pFqcomma
  {}_{#2}F_{#3}{\left(\genfrac..{0pt}{}{#4}{#5};#6\right)}%
  \endgroup
}
\newcommand{\pFqcomma}{\mskip\pFqmuskip}

\title[$q$-multiplicative and additive convolution]{Multiplicative and Additive  Finite Free Convolution For $q$-Polynomials}

\author[A. Mart\'{\i}nez-Finkelshtein]{Andrei Mart\'{\i}nez-Finkelshtein}

\address[AMF]{Department of Mathematics, Baylor University, TX, USA, and Department of Mathematics, University of Almer\'{\i}a, Spain}

\email{a\_martinez-finkelshtein@baylor.edu}

\author[R.~Morales]{Rafael Morales}

\address[RM]{Department of Mathematics, Baylor University, TX, USA}

\email{rafael\_morales2@baylor.edu}

\author[D.~Perales]{Daniel Perales}

\address[DP]{Department of Mathematics, University of Notre Dame, IN, USA}

\email{dperale2@nd.edu}

\date{\today}

\keywords{$q$-hypergeometric polynomials; Finite free convolution; Free probability; Zeros}

\subjclass[2020]{Primary:  33C45; Secondary: 33C20, 42C05, 46L54}

\begin{document}

\begin{abstract}
We study $q$-analogs of finite free convolutions and their interaction with families of $q$-hypergeometric polynomials. First, we revisit the $q$-multiplicative finite free convolution, previously introduced in the literature, and show that it acts naturally on $q$-hypergeometric polynomials: the convolution of two such polynomials remains within the same class, with para\-meters obtained by concatenation. This observation provides a simple mechanism for constructing large families of $q$-hypergeometric polynomials whose zeros are real and whose logarithmic mesh is controlled. We illustrate it with an example of multiple little $q$-Jacobi polynomials of the first kind. A result of independent interest is also an alternative definition of the $q$-multiplicative convolution in terms of $q$-differential operators.

Motivated by the additive finite free convolution, we introduce a $q$-additive finite free convolution and study its algebraic and analytic properties. Although this convolution does not preserve real-rootedness in general, we show that a natural modification involving a $q$-multiplicative convolution restores the preservation of real roots and interlacing for polynomials with bounded logarithmic mesh.

Finally, we develop a systematic method to translate product identities of $q$-hypergeometric functions into convolution identities for $q$-hypergeometric polynomials. This approach yields several explicit formulas for $q$-additive convolutions and produces new families of real-rooted $q$-hypergeometric polynomials.
\end{abstract}

\maketitle

\tableofcontents

\section{Introduction}

Finite free convolution operations have become an important tool in the study of
real-rooted polynomials and spectral phenomena.
They were introduced in the work of Marcus, Spielman, and Srivastava
\cite{MR4408504} as part of their resolution of the Kadison--Singer problem.
These operations provide algebraic transformations of polynomials that preserve
real-rootedness and interlacing properties, and have since found applications in
spectral graph theory, random matrix theory, and the theory of hyperbolic
polynomials.
Closely related ideas had appeared earlier in the classical work of
Walsh \cite{Wal22} and Szeg\H{o} \cite{polya1914uber, szego1922} (see also \cite{MR2242036}), who studied transformations
preserving the location and interlacing of zeros of real polynomials.

Two fundamental operations in this context are the additive and multiplicative
finite free convolutions.
Both can be defined explicitly in terms of the coefficients of polynomials and
have strong stability properties with respect to real zeros.
In particular, the additive finite free convolution preserves real-rootedness
and interlacing, while the multiplicative convolution preserves real-rootedness
for polynomials whose zeros lie in suitable half-lines.
These results provide a powerful algebraic framework for constructing new
families of real-rooted polynomials from known ones, which was successfully exploited in \cite{MR4789095} in the framework of hypergeometric polynomials. Finite free convolution turned out to be a tool that allows us to derive conclusions about the zeros of these polynomials from the corresponding properties of more elementary ``building blocks.''

The main goal of this paper is to extend this machinery to the $q$-hypergeometric polynomials. The theory of basic hypergeometric series has
long provided a rich source of special functions and orthogonal polynomials
families.
Truncations of basic hypergeometric series lead to the so-called
$q$-hypergeometric polynomials; see, for instance,
\cite{koekoek2010hypergeometric}.
The location and interlacing properties of the zeros of these polynomials have
been investigated in a variety of settings; see, e.g.,
\cite{GJRS16, moak1981q}.
These results reveal a remarkable dependence of the zero distribution on the
parameters of the hypergeometric series and show that many such polynomials
have entirely real zeros under explicit parameter conditions.

However, the finite free convolutions, used as in \cite{MR4789095}, do not preserve the main properties of $q$-hypergeometric polynomials, and have to be adapted to this context. The first modification of this type of binary operations on polynomials, known as the
$q$-multiplicative finite free convolution and denoted by $\boxtimes_{n,q}$, was introduced
by Lamprecht \cite{lamprecht2016suffridge} and can be viewed as a deformation of the standard multiplicative finite convolution appearing in free probability.

We show that this operation interacts remarkably well with
$q$-hypergeometric polynomials. The starting point is the observation that the $q$-multiplicative convolution of two $q$-hypergeometric polynomials is again a $q$-hypergeometric polynomial whose parameters are obtained by concate\-nation of the parameter sets of the factors. This is a $q$-analog of the hypergeometric convolution structure
observed in \cite{MR4789095}. 
It provides a simple and systematic mechanism for constructing large families
of $q$-hypergeometric polynomials with controlled zero distribution.
In particular, by combining this operation with known real-rooted
$q$-hypergeometric families, we obtain broad classes of polynomials whose zeros
are real and whose relative spacing (or logarithmic mesh) is bounded. Moreover, we derive a simple equivalent definition of the $q$-multiplicative finite convolution in terms of some $q$-differential operators, an apparently new result that has an independent interest.

In the second part of the paper, we introduce a $q$-analog of the additive
finite free convolution, that is, a binary operation $\boxplus_{n,q}$ acting on polynomials of degree at most
$n$. It is defined in terms of the coefficients of the polynomial factors, but as in the previous case, it admits a natural representation in terms of $q$-differential
operators and can therefore be connected directly with the analytic structure
of basic hypergeometric series. It allows us to develop a systematic method to rewrite factorization formulas for 
$q$-hypergeometric functions as identities involving $q$-additive convolutions of
$q$-hypergeometric polynomials (again, as a $q$-analog of the similar identities de\-vel\-oped in \cite{MR4789095}). 
As illustrations, we derive several explicit formulas for $q$-additive
convolutions of $q$-hypergeometric polynomials based on the classical product
formulas such as Heine's transformation and related identities for basic
hypergeometric series.

The results of this paper provide new tools for constructing and analyzing
real-rooted families of \(q\)-hypergeometric polynomials. The
\(q\)-multiplicative finite free convolution \(\boxtimes_{n,q}\) fits especially
well with the \(q\)-hypergeometric structure: the convolution of two
\(q\)-hypergeometric polynomials is again \(q\)-hypergeometric, with parameters
obtained by concatenation. Combined with known real-rootedness and interlacing
criteria for basic hypergeometric polynomials, this gives a systematic way to
generate new families with controlled zero location and logarithmic mesh.

The \(q\)-additive convolution \(\boxplus_{n,q}\), introduced here as a
\(q\)-deformation of the additive finite free convolution, exhibits a different
phenomenon. In contrast with the classical operation, \(\boxplus_{n,q}\) does
not preserve real-rootedness in general. Our main result in this direction is
that this obstruction is removed by a universal \(q\)-multiplicative correction.
Namely, convolution with the fixed polynomial
\[
   {}_1\phi_1\!\left(\begin{matrix}q^{-n}\\ 0\end{matrix};q,-x\right),
\]
through \(\boxtimes_{n,q}\), restores real-rootedness for inputs in
\(\P_n^q(\mathbb R)\). In the positive case
\(p,r\in\P_n^q(\mathbb R_{>0})\), the corrected polynomial also has
positive zeros and satisfies the same logarithmic-mesh bound. We further obtain
an analogous preservation result for interlacing. In this sense, the corrected
\(q\)-additive convolution recovers, under natural logarithmic-mesh hypotheses,
the zero-preserving behavior characteristic of the classical additive finite
free convolution.

The paper is organized as follows.
In Section~\ref{sec:prelim}, we review basic notions concerning real-rooted polynomials,
interlacing, logarithmic mesh, and $q$-hypergeometric polynomials.
Section~\ref{sec:3} studies the $q$-multiplicative finite free convolution and its
action on $q$-hypergeometric polynomials, including the parameter
concatenation phenomenon described above.
In Section~\ref{sec:4}, we introduce the $q$-additive finite free convolution,
analyze its algebraic properties, and establish real roots preservation, as explained above. Finally, 
Section~\ref{sec:5} develops a general method for translating product
identities of basic hypergeometric functions into convolution identities
for $q$-hypergeometric polynomials and presents several illustrative
examples. Some technical proofs are deferred to Appendices~\ref{appendixA} and \ref{appendixB}.

\section{Preliminaries} \label{sec:prelim}

\subsection{Zero interlacing and mesh} \
 
Throughout this paper, $\P_n$ stands for the space of algebraic polynomials of degree $\le n$, and $\monicpols \subset \P_n$ is the subset of monic polynomials of degree $n$. For a set $K\subset \C$, we denote by $\P_n(K) $ (resp., $\monicpols(K)$) the subset of polynomials of degree $\le n$ (resp., monic polynomials of degree $n$)  with all their zeros in $K$. In particular, $\monicpolreal$ denotes the family of real-rooted monic polynomials of degree $n$, 
$\monicpols(\R_{\ge 0})$
is\footnote{\, Hereafter, $\R_{\ge 0}$ stands for $[0,+\infty)$, etcetera.} the subset of $\monicpolreal$ of polynomials having only non-negative roots, etcetera. 

We denote the zeros of a polynomial $p\in \P_n$, by $\lambda_1(p),\lambda_2(p), \dots, \lambda_n(p)$, accounting for multiple zeros. Moreover, if $p\in \P_n(\R)$ we assume that
\begin{equation}
\label{eq:zeros}
\lambda_1(p) \leq \dots \leq \lambda_n(p)  .  
\end{equation}

\begin{definition}[Interlacing] 
Let $p\in \P_n(\R)$ and $r\in \P_m(\R)$, with the zeros denoted as in \eqref{eq:zeros}.  
We say that $r$ \textbf{interlaces} $p$ (or, equivalently, that \textbf{zeros of $r$ interlace zeros of $p$}, see, e.g.,~\cite{MR3051099}), and denote it $p \preccurlyeq r$, if 
\begin{equation} \label{interlacing1}
    m=n \quad \text{and} \quad \lambda_1(p) \leq \lambda_1(r) \leq \lambda_2(p) \leq \lambda_2(r) \leq \cdots \leq  \lambda_n(p) \leq \lambda_n(r),
\end{equation}
or if
\begin{equation} \label{interlacing2}
    m=n-1 \quad \text{and} \quad \lambda_1(p) \leq \lambda_1(r) \leq \lambda_2(p) \leq \lambda_2(r) \leq \cdots \leq  \lambda_{n-1}(p) \leq \lambda_{n-1}(r) \le \lambda_n(p).
\end{equation}
Furthermore, we use the notation $p \prec r$ when all inequalities in \eqref{interlacing1} or \eqref{interlacing2} are strict. Notice that $\prec$ or $\preccurlyeq$ are not order relations, since they lack transitivity: from the fact that $p\prec r$ and $r\prec s$ it does not necessarily follow that $p\prec s$.
\end{definition}

\begin{lemma}[\cite{jordaan2009interlacing}]\label{Jordaaninterlacing}
Let $p_n\in\P_n(\R_{\geq 0})\setminus \P_{n-1}(\R_{\geq 0})$ and $r_{n-1}\in\P_{n-1}(\R_{> 0}) \setminus \P_{n-2}(\R_{> 0})$ such that $p_n\prec r_{n-1}$ and assume that a polynomial $g$ of degree $n$ satisfies, for a constant $c\neq 0$,
    $g(x)=p_n(x)+c \, r_{n-1}(x)$  or  $ g(x)=p_n(x)+c x\, r_{n-1}(x)$. 
    Then $g\in \P_n(\R)$ and
    $$
    p_n\prec g \prec r_{n-1} \quad \text{ or } \quad g\prec p_n \prec r_{n-1}.
    $$
\end{lemma}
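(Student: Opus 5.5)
The plan is a sign-counting argument. I evaluate $g$ at the zeros of $p_n$ and of $r_{n-1}$ and locate its zeros with the intermediate value theorem.

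Write the zeros of $p_n$ as $0\le \lambda_1<\mu_1<\lambda_2<\mu_2<\dots<\mu_{n-1}<\lambda_n$. Here the $\mu_j>0$ are the zeros of $r_{n-1}$, and strict interlacing forces all zeros of both polynomials to be simple. Normalize both polynomials to have positive leading coefficients; this only changes the sign of $c$. Then $p_n(\mu_j)$ has sign $(-1)^{n-j}$, and $r_{n-1}(\lambda_k)$ has sign $(-1)^{n-k}$, since exactly $n-k$ zeros of $r_{n-1}$ lie to the right of $\lambda_k$.

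The key steps are as follows.

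First, at each $\lambda_k$ we have $g(\lambda_k)=c\,r_{n-1}(\lambda_k)$ or $c\lambda_k r_{n-1}(\lambda_k)$. For every $k$ with $\lambda_k>0$ this has sign $\operatorname{sign}(c)(-1)^{n-k}$, so these values alternate in sign. The intermediate value theorem then gives a zero of $g$ in each open interval $(\lambda_k,\lambda_{k+1})$.

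Second, to decide which half of each interval contains the zero, I use the values at the $\mu_j$: there $g(\mu_j)=p_n(\mu_j)$, which has sign $(-1)^{n-j}$. On $(\lambda_j,\lambda_{j+1})$, compare the sign of $g$ at $\lambda_j$ with its sign at $\mu_j$; they agree or differ according to the sign of $c$. This places the zero in $(\lambda_j,\mu_j)$ for one sign of $c$ and in $(\mu_j,\lambda_{j+1})$ for the other, uniformly in $j$. That yields $n-1$ zeros of $g$.

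Third, the last zero of $g$ is real, since $g$ has real coefficients and degree $n$ and its other $n-1$ zeros are real. I locate it by comparing $g(\lambda_n)$ or $g(\lambda_1)$ with the behaviour of $g$ at $\pm\infty$, using that $\deg g=n$. In one sign case this puts it in $(\lambda_n,\infty)$, giving $p_n\prec g\prec r_{n-1}$. In the other it lies in $(-\infty,\lambda_1)$, giving $g\prec p_n\prec r_{n-1}$.

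The main obstacle is the boundary case $\lambda_1=0$ in the variant $g=p_n+cx\,r_{n-1}$. There $g(0)=0$, so $g(\lambda_1)$ is not strictly signed and the count above has to be redone. To handle it, I would note that $0$ is then a common zero of $p_n$ and $g$. I would factor out $x$, write $p_n=x\tilde p$, and use $g/x=\tilde p+c\,r_{n-1}$. Since $\tilde p$ still interlaces $r_{n-1}$ strictly, this reduces to the first variant applied to $\tilde p$ and $r_{n-1}$. In that boundary case only the non-strict version of the conclusion at the point $0$ can hold.

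A second point to check is the degree condition on $g$. In the first variant the leading coefficient of $g$ is that of $p_n$. In the second it is $\mathrm{lc}(p_n)+c\,\mathrm{lc}(r_{n-1})$, and the hypothesis $\deg g=n$ guarantees this is nonzero. Its sign relative to $\operatorname{sign}(c)$ determines the sign at infinity used in the third step. I would therefore split that step according to the sign of $1+c\,\mathrm{lc}(r_{n-1})/\mathrm{lc}(p_n)$ and check that each subcase gives one of the two stated configurations.
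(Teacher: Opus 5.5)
The paper does not prove this lemma; it only quotes it from \cite{jordaan2009interlacing}. Your sign count at the zeros of $p_n$ and $r_{n-1}$, combined with the intermediate value theorem and a degree count, is the standard argument for such statements, and it is correct. Your planned split by the sign of $\mathrm{lc}(g)$ in the variant $g=p_n+cx\,r_{n-1}$ works. After normalization, suppose $c<0$ and $\mathrm{lc}(g)<0$. Then $n-1$ zeros of $g$ still lie in the intervals $(\lambda_j,\mu_j)$. The remaining zero is negative, as you see by comparing $g(0)=p_n(0)$ with the sign of $g$ near $-\infty$. So in that subcase one gets $g\prec p_n\prec r_{n-1}$, even though $g\not\prec r_{n-1}$.

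Your remark on the boundary case is correct, and it shows the lemma as printed is false there. If $\lambda_1(p_n)=0$ and $g=p_n+cx\,r_{n-1}$, then $0$ is a common zero of $p_n$ and $g$, which strict interlacing forbids. For example, $p_n=x(x-2)$, $r_{n-1}=x-1$ and $c=1$ give $g=x(2x-3)$. So that case cannot be ``handled''; the conclusion can only be weakened to $\preccurlyeq$.

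Your reduction is also slightly mislabeled. After writing $p_n=x\tilde p$, the polynomials $\tilde p$ and $r_{n-1}$ both have degree $n-1$, with $r_{n-1}\prec\tilde p$. This is not an instance of the first variant. What gives the non-strict conclusion is the equal-degree analogue of your sign count, taken at the $\mu_j$ and at $\pm\infty$.

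None of this affects the paper. The variant with $x\,r_{n-1}$ is only used for $p_n$ with positive zeros (Proposition~\ref{prop:lmeshinterlaing}, Remark~\ref{remark:lmeshinterlaing}). A zero at the origin occurs only in the first variant (Appendix~\ref{appendixA}), where $g(0)=c\,r_{n-1}(0)\neq 0$ and your argument applies verbatim.
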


The relative spacing of real zeros of a polynomial is given by the following notion: 
\begin{definition}\label{def:logarithmicmesh}
For $p\in \P_n(\R_{>0})$, $n\geq 2$, with the zeros denoted as in \eqref{eq:zeros}, 
its \textbf{logarithmic mesh} is 
\begin{equation*}
    \lmesh{p}:=\max_{1\leq j\leq n-1}\frac{\lambda_{j}(p)}{\lambda_{j+1}(p)}\leq 1.
\end{equation*}
For $p\in \P_n(\R_{<0})$ we understand by $\lmesh{p}$ the value $\lmesh{r}$, with $r(x)=p(-x)$.
\end{definition}

In the rest of the paper, the polynomials with an upper bound on the logarithmic mesh will play a prominent role, which motivates the following notation: for $q\in (0,1)$ and $K$ standing for either $\R_{>0}$ or $\R_{<0}$, 
$$\P^q_n(K):=\{r\in \P_n(K):\lmesh{r}< q\} \quad \text{and} \quad \overline{\P^q_n}(K):=\{r\in \P_n(K):\lmesh{r}\leq q\}.
$$
We can characterize $\P_n^q(\R_{> 0})$ as follows, see \cite[Lemma 23]{lamprecht2016suffridge}: 
\begin{align}
    p\in\P_n^q(\R_{> 0})& \Longleftrightarrow p(x)\prec p(qx)\label{lmeshRplus}.
\end{align}
A similar characterization is valid for 
$\overline{\P^q_n}(\R_{>0})$ when replacing $\prec$ by $\preccurlyeq$.
Finally, we will write
$$
p \propto r
$$
to indicate that polynomials $p$ and $r$ coincide up to a nonzero multiplicative constant.

\subsection{\texorpdfstring{$q$-hypergeometric polynomials}{q-hypergeometric polynomials}}\

For $q\in(0,1)$, $a\in\R$ and $k\in \nn$, the \textbf{$q$-factorial} (also known as the \textbf{$q$-Pochhammer symbol}) is defined by 
\begin{equation}\label{eq:def.qfactorial}
    \qraising{a}{q}{0}:=1, \qquad  \qraising{a}{q}{k}:=\prod_{j=0}^{k-1}(1-aq^{j}).
\end{equation}
One can retrieve the conventional Pochhammer symbol by letting $q$ tend to 1,
\begin{equation}\label{qraising3}
   \lim_{q\to 1}\frac{\qraising{q^a}{q}{k}}{(1-q)^k}= \raising{a}{k}:=a(a+1)\cdots(a+k-1).
\end{equation}

To deal with the product of several $q$-factorials, we extend the definition to vectors of parameters. For  $\bm a=(a_1, \dots,a_r)\in\R^r$, we denote
\begin{equation*}
    \qraising{\bm a}{q}{k}:=\prod_{j=1}^r \qraising{a_j}{q}{k}.
\end{equation*}

With this notation, the \textbf{$q$-hypergeometric series} for $\bm a=(a_1,\cdots,a_r)\in\R^r$ and $\bm b=(b_1,\cdots,b_s)\in\R^s$ is (see \cite[\S 1.10]{koekoek2010hypergeometric})\footnote{\, Here and in what follows we understand by $\binom{k}{2}$ the value $k(k-1)/2$. With this convention, $\binom{0}{2}=\binom{1}{2}=0$.}

\begin{equation}\label{def:qhyperserie}
    \pQq{r+1}{s}{a_0,\bm a}{\bm b}{q}{x}:=\sum_{k=0}^{\infty}\frac{\qraising{a_0}{q}{k}  \qraising{\bm a}{q}{k}}{\qraising{\bm b}{q}{k}}(-1)^{(s-r)k}q^{(s-r)\binom{k}{2}}\frac{x^k}{\qraising{q}{q}{k}}.
\end{equation}
If $a_j=q^{-n}$ for some $j$ and $n\in\N$, then the series truncates:
\begin{align}\label{def:qpolynomial1}
     \pQq{r+1}{s}{q^{-n},\bm a}{\bm b}{q}{x}:=\sum_{k=0}^{n}(-1)^{(s-r)k}q^{(s-r)\binom{k}{2}}\frac{\qraising{q^{-n}}{q}{k} }{\qraising{q}{q}{k}}\frac{ \qraising{\bm a}{q}{k}}{\qraising{\bm b}{q}{k}}\, x^k
\end{align}
is a $q$-\textbf{hypergeometric polynomial} of degree $\le n$, under the assumption that
\begin{equation}  \label{constrainB}
    b_1,\dots b_s \in \R \setminus \{ q^{-1},q^{-2},\dots, q^{-n+1} , q^{-n}\}.
\end{equation}

\subsection{\texorpdfstring{Real zeros of $q$-hypergeometric polynomials}{Real zeros of q-hypergeometric polynomials}}\label{propertiesqhyper} 

Location and interlacing of zeros for   
$$
p(x)=\pQq{i+1}{j}{q^{-n},a}{b}{q}{x}\qquad \text{ where }  i,j\in\{0,1\},
$$
has been studied in \cite{gochhayat2016interlacing, moak1981q, martinezfinkelshtein2025zerosorthogonallittleqjacobi}. We've used \cite{martinezfinkelshtein2025zerosorthogonallittleqjacobi} to compile conditions on the parameters $a$ and $b$ that guarantee that $p$ is real-rooted; see Table~\ref{tab:2Q1}. 
\begin{table}[h]
    \centering
\begin{tabular}{|c|c|c|c|c|c|}
\hline \textbf{Row} &  $(i,j)$ & $a$ & $b$ & \textbf{Roots in} & $\lmesh$ \\
\hline \hline 1 & $(1,1)$ & $\left(-\infty, bq^{n-1}\right)$ & $(0,1)$ & $(0,q)$ & $<q$ \\[1mm]
\hline 2 & $(1,1)$ & $\Big\{b,bq,\dots, bq^{n-1} \Big\} $ & $(0,1)$ & $(0,q)$ & $\leq q$ \\
\hline 3 & $(1,1)$ & $\Big(q^{1-n},\infty\Big)$ & $(-\infty,0)$ & $\R_{<0}$ & $<q$ \\
\hline 4 & $(1,1)$ & $\Big(q^{1-n},bq^{n+1}\Big)$ & $\Big(q^{-2n+2},\infty\Big)$ &  $\R_{>0}$  & $<q$ \\
\hline 5 & $(1,1)$ & $(-\infty,0)$ & $\{0\}$ & $(0,q)$ & $<q$\\
\hline 6 & $(1,1)$ &  $\Big(q^{1-n},\infty\Big)$  & $\{0\}$ & $\R_{<0}$  & $<q$ \\
\hline 7 & $(1,0)$ & $\Big(q^{1-n},\infty\Big)$ & $\cdot$ & {$\R_{> 0}$} & $<q$ \\
\hline 8 & $(0,1)$ & $\cdot$ & $(0,1)$ & $\R_{<0}$ & $<q^2$ \\
\hline 9 & $(0,1)$ & $\cdot$ & $0$ & $\R_{<0}$ & $\leq q^2$ \\
\hline 10 & $(0,1)$ & $\cdot$ & $(-\infty,0)$ & $\R_{<0}$ & $<q$ \\
\hline
\end{tabular}
\vspace{2mm}
\caption{Real zeros of some $\pQq{i+1}{j}{q^{-n}, \, a}{  b}{q}{x}$ polynomials, $i,j\in\{0,1\}$.
}
    \label{tab:2Q1}
\end{table}  

By \cite[Section 3.2]{martinezfinkelshtein2025zerosorthogonallittleqjacobi}, for $q^2\leq t_1,t_2<1$, the following interlacings hold, 
\begin{align}
    \pQq{1}{1}{q^{-n}}{t_1b}{q}{x}&\preccurlyeq\pQq{1}{1}{q^{-n}}{b}{q}{x} \quad \text{for } b\geq 0 ,\label{qinterlacing1}\\
     \pQq{1}{1}{q^{-n}}{b}{q}{x}&\preccurlyeq \pQq{1}{1}{q^{-n}}{t_1^{-1}b}{q}{t_1^{-1}x} \quad \text{for } b< 0 \label{qinterlacing2}.
\end{align}
For $0<b<1$,
\begin{align}
    \pQq{2}{1}{q^{-n},t_1a}{b}{q}{x}&\prec\pQq{2}{1}{q^{-n},t_2a}{t_2b}{q}{x} \quad \text{for } 0\leq a<bq^{n-1},\label{qinterlacing3}\\
    \pQq{2}{1}{q^{-n},t_1a}{b}{q}{x}&\preccurlyeq\pQq{2}{1}{q^{-n},t_2a}{t_2b}{q}{x} \quad \text{for } a\in \Big\{b,bq,\dots, bq^{n-1} \Big\} ,\label{qinterlacing6}\\
    \pQq{2}{1}{q^{-n},a}{b}{q}{x}&\prec\pQq{2}{1}{q^{-n},t_1a}{b}{q}{x} \quad \text{for } a<0,\label{qinterlacing4}\\
     \pQq{2}{0}{q^{-n},t_1^{-1}a}{\cdot}{q}{t_1x} &\prec \pQq{2}{0}{q^{-n},a}{\cdot}{q}{x} \quad \text{for } a> q^{1-n}.\label{qinterlacing5}
\end{align}
Equation \eqref{qinterlacing6} does not appear explicitly in \cite{martinezfinkelshtein2025zerosorthogonallittleqjacobi}, but it is a straightforward consequence of the results therein. 

\subsection{\texorpdfstring{$q$-Derivative}{q-Derivative}}

For $q\in (0,1)$, the \textbf{$q$-derivative} operator $\mathfrak{D}_q $ is defined as
\begin{equation*}
    \mathfrak{D}_q f(x) := \begin{cases} 
\dfrac{f(x) - f(qx)}{(1-q)x}, & x \neq 0, \\[8pt] 
f'(0), & x = 0.
\end{cases}
\end{equation*}
In this work, it is more convenient to use a renormalization of the \textbf{$q$-derivative} that will simplify our computations:
\begin{equation}\label{def:qderivative}
    \mathfrak{d}_q f(x) := (1-q)\,\mathfrak{D}_qf(x).
\end{equation}
From the definition we get for $c\in\C$,
\begin{equation}\label{qderivative1}
    \D_q[f(cx)]=c(\D_qf)(cx).
\end{equation}

It is a well known fact that if a polynomial has lmesh less than $q$, then so does its $q$-derivative, and these two polynomials interlace, see for instance \cite[theorem 25]{lamprecht2016suffridge}. In our notation, this is summarized as
$$p \in \P^q_n(\rr_{> 0}) \quad  \Rightarrow \quad \D_q p \in \P^q_{n-1}(\rr_{> 0}) \text{ and }  p\prec \D_q p.$$ 
A converse of this result will be useful to construct polynomials with lmesh less than $q$.
\begin{proposition}\label{prop:lmeshinterlaing}
    Let $p $ be a polynomial of degree $n$ with simple positive roots. Then 
    $$\D_q p\in \P_{n-1}(\rr_{> 0}) \text{ and } p\prec \D_q p \quad  \Rightarrow \quad  \lmesh p <q.$$
\end{proposition}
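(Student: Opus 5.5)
Let the zeros of $p$ be $0<\lambda_1<\dots<\lambda_n$. The plan is to read off the zeros of $\D_q p$ from the sign changes of $p(x)-p(qx)$. Since $\D_q p(x)=\bigl(p(x)-p(qx)\bigr)/x$, on $\R_{>0}$ the zeros of $\D_q p$ are exactly the positive solutions of $p(x)=p(qx)$. By hypothesis $\D_q p$ has degree $n-1$ with positive zeros $\mu_1\le\dots\le\mu_{n-1}$, and $p\prec\D_q p$ gives
\[
\lambda_1<\mu_1<\lambda_2<\mu_2<\cdots<\mu_{n-1}<\lambda_n .
\]
In particular each interval $(\lambda_j,\lambda_{j+1})$ contains exactly one zero of $\D_q p$, and that zero is simple.

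The key step is to show $q\lambda_{j+1}>\lambda_j$ for every $j$. Suppose instead that $q\lambda_{j+1}\le\lambda_j$ for some $j$. I would then evaluate $h(x):=p(x)-p(qx)$ at the endpoints of $(\lambda_j,\lambda_{j+1})$:
\[
h(\lambda_j)=-p(q\lambda_j), \qquad h(\lambda_{j+1})=-p(q\lambda_{j+1}).
\]
Since $q\lambda_{j+1}\le\lambda_j$, the points $q\lambda_j<q\lambda_{j+1}$ both lie in $(0,\lambda_j]$.

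To compare the signs of $p$ at these points, normalize $p$ to be monic. Then $\sign{p(x)}=(-1)^{n-i}$ on $(\lambda_i,\lambda_{i+1})$, and also on $(0,\lambda_1)$ with $i=0$. The function $h$ has exactly one sign change in $(\lambda_j,\lambda_{j+1})$, namely at $\mu_j$, and no other zeros there. Hence $h(\lambda_j)$ and $h(\lambda_{j+1})$ must have opposite signs, or one of them is zero; it is cleaner to say they lie on opposite sides of the single crossing.

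Next I would count zeros of $p$ in the interval $[q\lambda_j,q\lambda_{j+1}]$. Scaling by $q$ gives a bijection between the zeros of $p$ in $[\lambda_j,\lambda_{j+1}]$ and the zeros of $p(q\,\cdot)$ in $[q\lambda_j,q\lambda_{j+1}]$. That does not directly count zeros of $p$ there, so a more global argument is needed.

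The global count goes as follows. On $(0,\infty)$, $h$ has exactly the $n-1$ zeros $\mu_i$, all simple, with one in each gap. Using $h(\lambda_i)=-p(q\lambda_i)$, the sign pattern forces $\sign{p(q\lambda_i)}$ to alternate in $i$ (including endpoint information at $\lambda_1$ and $\lambda_n$). Alternation of $p(q\lambda_i)$ means $p$ has an odd number of zeros in each interval $(q\lambda_i,q\lambda_{i+1})$. There are $n-1$ such intervals, and $p$ has zeros only among the $\lambda$'s. Also $q\lambda_1<\lambda_1$ lies in $(0,\lambda_1)$, where $p$ has sign $(-1)^n$, which pins down the starting sign. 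Together these force exactly one $\lambda$ in each interval $(q\lambda_i,q\lambda_{i+1})$, and the first boundary condition shows that this $\lambda$ must be $\lambda_i$ itself. That is, $q\lambda_i<\lambda_i<q\lambda_{i+1}$ for all $i$, which gives $\lambda_i/\lambda_{i+1}<q$.

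The main obstacle is making the alternation step rigorous. One has to handle the endpoint sign of $h$ near $0$: $h(x)\to 0$ with the sign of $(1-q)x\,p'(0)$, equivalently of $\D_q p(0)$, and one must also exclude the degenerate case $p(q\lambda_i)=0$. I would try first to derive the alternation directly from $\sign{\D_q p(\lambda_i)}=(-1)^{n-i}\cdot$const, which follows from the interlacing $p\prec\D_q p$. Then $p(q\lambda_i)=-\lambda_i\D_q p(\lambda_i)\ne0$ alternates in sign, and the counting argument above concludes.
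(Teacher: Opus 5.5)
Your proof is correct, but it takes a more hands-on route than the paper. The paper writes $p(qx)=p(x)-x\,\D_q p(x)$ and applies Lemma~\ref{Jordaaninterlacing} with $r_{n-1}=\D_q p$ and $c=-1$. This gives either $p\prec p(q\,\cdot)$ or $p(q\,\cdot)\prec p$. Since the zeros $\lambda_i/q$ of $p(q\,\cdot)$ exceed the corresponding $\lambda_i$, the first alternative holds, and the characterization \eqref{lmeshRplus} turns it into $\lmesh p<q$.

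You instead prove the needed interlacing directly:
\begin{itemize}
\item Strict interlacing gives $\sign{\D_q p(\lambda_i)}=(-1)^{n-i}\cdot\mathrm{const}$.
\item Hence $p(q\lambda_i)=-\lambda_i\,\D_q p(\lambda_i)$ is nonzero and alternates in sign.
\item So each of the $n-1$ disjoint intervals $(q\lambda_i,q\lambda_{i+1})$ contains a zero of $p$.
\item Since $\lambda_n>q\lambda_n$ lies outside all of these intervals, each contains exactly one zero. By ordering that zero is $\lambda_i$, whence $\lambda_i<q\lambda_{i+1}$.
\end{itemize}
This is essentially the sign-alternation mechanism behind the cited lemma, specialized to $g(x)=p(qx)$, combined with a direct reading of the logarithmic mesh. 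Your version is therefore self-contained and independent of both citations, while the paper's is a two-line reduction to known results.

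In a write-up, drop the abandoned contradiction attempt and the discussion of the sign of $h$ near $0$. The ``main obstacle'' you flag is already settled by the identity $p(q\lambda_i)=-\lambda_i\,\D_q p(\lambda_i)$, which you state yourself. Also make the closing count explicit: it is the fact that $\lambda_n>q\lambda_n$ lies outside $[q\lambda_1,q\lambda_n]$ that forces exactly one zero per interval. The ``starting sign'' of $p$ on $(0,\lambda_1)$ plays no role.
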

\begin{proof}
By hypothesis $p\prec \D_q p$ and from the definition of $\D_q$ we know that 
$$p(qx)=p(x)-x\D_qp(x).$$
By  Lemma \ref{Jordaaninterlacing}, we get that either $p(x)\prec p(qx)$ or $p(qx)\prec p(x)$. Since $p$ has positive roots then $p(x)\prec p(qx)$, which is equivalent to $p \in \P^q_n(\rr_{> 0})$ by \eqref{lmeshRplus}.
\end{proof}

\begin{remark}\label{remark:lmeshinterlaing}
    Note that Proposition \ref{prop:lmeshinterlaing} still holds if we replace the hypothesis of $p\prec \D_q p$ by $p\prec (\D_qp)(q^{-1}x)$. The proof is completely analogous, but using 
    $$
    p(qx)=p(x)+x(\D_q p)(q^{-1}x).
    $$
\end{remark}


\section{\texorpdfstring{$q$-Multiplicative Convolution}{q-Multiplicative Convolution}} \label{sec:3}

\subsection{Preliminary results}\

In this section, we will introduce definitions and results of $q$-multiplicative convolution, studied in \cite{lamprecht2016suffridge}.

In what follows, we will denote the coefficients of a polynomial $p$ of degree $n$ as follows:
\begin{equation}\label{monicP}
    p(x)=\sum_{j=0}^n x^{n-j}  e_j(p).
\end{equation}
Since we do not require a priori $e_0(p)\neq 0$, the notation $e_j(p)$ implicitly assumes the dependence on $n$. It is convenient to keep it in mind, since we will avoid mentioning it explicitly to simplify the notation.

\begin{definition}\label{def:qmultiplication}
Given two polynomials, $p$ and $r$, of degree at most $n$, the 
\textbf{$n$-th $q$-multiplicative finite free convolution} of $p$ and $r$, denoted by $p\boxtimes_{n,q} r$, is a polynomial of degree at most $n$, which, following the notational convention \eqref{monicP}, is defined as 
$$
[p\boxtimes_{n,q} r](x): =\sum_{k=0}^n  e_k(p\boxtimes_{n,q} r)x^{n-k},
$$
with 
\begin{equation}\label{def:qmulticoeff}
    e_k(p\boxtimes_{n,q} r)=\frac{\qraising{q}{q}{n-k}}{\qraising{q^{-n}}{q}{n-k}}\,  e_k(p) e_k(r).
\end{equation}
\end{definition}
This definition differs from the given in \cite{lamprecht2016suffridge} only by a rescaling of $x$ by $-q^{-n}$. This is done to simplify the notation when applied to $q$-hypergeometric polynomials. 

The $q$-multiplicative finite free convolution is a linear operator  from $\P_n \times \P_n$ to $\P_n$: if $p,r,g\in \P_n$,  and $\alpha\in\rr$, then 
\begin{equation*}
    (\alpha p+g)\boxtimes_{n,q} r= \alpha (p\boxtimes_{n,q} r)+g\boxtimes_{n,q} r.
\end{equation*}

The identity for the $q$-multiplicative convolution is given by the polynomial  
\begin{equation}\label{def:identitymultiplicaitve}
    E_{n,q}(x):=\prod_{k=1}^n \left( 1-q^{-k}x \right)=\pQq{1}{0}{q^{-n}}{\cdot}{q}{x},
\end{equation}
which has roots at $q,q^2,\dots q^n$. Indeed, one can check that $\coef{k}{E_{n,q}}=\frac{\qraising{q^{-n}}{q}{n-k}}{\qraising{q}{q}{n-k}}$ for $k=0,\dots,n$, implying by Definition \ref{def:qmultiplication} that
\begin{align}
    \label{identityMult1}
        p &\boxtimes_{n,q} E_{n,q} = p \qquad \text{ for all }p\in\pols.
\end{align}
It can be easily checked that scaling of the variable in $E_{n,q}$ before its $q$-multiplicative convolution has a simple effect on the result: for a scalar $\alpha$,

\begin{equation}
    \label{identityMult3}
    p(x) \boxtimes_{n,q} E_{n,q}(\alpha x) =  p\left(\alpha x\right).
\end{equation}
As a consequence,
\begin{align}
 \label{identityMult2}
        p(\alpha x)&\boxtimes_{n,q} r(\beta x)=p(x)\boxtimes_{n,q} r(\alpha\beta x).
\end{align}

\begin{remark}[Relation with multiplicative finite free convolution]
  Recall that for polynomials $p,r\in\P_n$ in the form \eqref{monicP}, we define the $n$-th multiplicative finite free convolution $p\boxtimes_n r$ by 
\begin{equation}\label{coeffMultiplicativeConv}
 e_k(p\boxtimes_n r)=  (-1)^k\binom{n}{k}^{-1} e_k(p) e_k(r), \quad k=0, 1, \dots, n. 
\end{equation}
It follows that
    $$
    \lim_{q\to 1}p \boxtimes_{n,q} r= p\boxtimes_n r.
    $$
Indeed, using the limit property \eqref{qraising3}, one has 
    $$
    \lim_{q\to 1}\frac{\qraising{q}{q}{n-k}}{\qraising{q^{-n}}{q}{n-k}}= \lim_{q\to 1}\frac{(1-q)^{n-k}\qraising{q}{q}{n-k}}{\qraising{q^{-n}}{q}{n-k}(1-q)^{n-k}}=(-1)^k\binom{n}{k}^{-1}.
    $$
\end{remark}

It turns out that the $q$-multiplicative convolution can be computed in terms of $q$-differential operators defined in \eqref{def:qderivative}, which can be taken as an alternative definition. This observation, to the best of our knowledge, is new. 

With a polynomial $p\in \P_n$ we can associate its \textbf{$(x\D_q)$-dual} polynomial $\check{p}\in \P_n$, such that
$$
\check{p}(x\D_q) [E_{n,q}(x)]=p(x).
$$
Indeed, $\check{p}$ is the unique polynomial of degree $n$ such that
\begin{equation}
    \check{p}(1-q^k)=\frac{\qraising{q}{q}{k}}{\qraising{q^{-n}}{q}{k}}e_k(p), \qquad \text{for } k=0,1,\dots,n.
\end{equation}

Following the ideas from \cite[Subsection 3.2.4]{mirabelli2021hermitian}, we obtain the following alternative definition of $q$-multiplicative convolution.

\begin{proposition}
 Given polynomials $p, r\in \P_n$, let $\check{p}$ and $\check{r}$ be their $(x\D_q)$-duals. 
Then  
$$
\left( p\boxtimes_{n,q}r\right)(x)=\check{p}(x\D_q)\check{r}(x\D_q)\left[ E_{n,q}(x)\right].
$$
\end{proposition}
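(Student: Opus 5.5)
The plan is to diagonalize the operator $x\D_q$ on monomials and then compare coefficients. By \eqref{def:qderivative}, for $x^m$ we have $\D_q x^m = \frac{x^m-q^m x^m}{x} = (1-q^m)x^{m-1}$. Hence
\[
x\D_q\, x^m=(1-q^m)x^m .
\]
So the monomials $x^m$ are eigenvectors of $x\D_q$ with eigenvalues $1-q^m$. For any polynomial $f$, the operator $f(x\D_q)$ therefore acts diagonally: $f(x\D_q)x^m=f(1-q^m)x^m$. Since $q\in(0,1)$, the eigenvalues $1-q^0,1-q^1,\dots,1-q^n$ are distinct.

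The first step is to justify the defining property of the dual. Write $E_{n,q}(x)=\sum_{k=0}^n e_k(E_{n,q})x^{n-k}$. The paper gives $e_k(E_{n,q})=\frac{\qraising{q^{-n}}{q}{n-k}}{\qraising{q}{q}{n-k}}$. Reindexing by the power $m=n-k$, the coefficient of $x^m$ in $E_{n,q}$ is $\frac{\qraising{q^{-n}}{q}{m}}{\qraising{q}{q}{m}}$, and this is nonzero for $m\le n$. Then
\[
\check p(x\D_q)E_{n,q}(x)=\sum_{m=0}^n \check p(1-q^m)\,\frac{\qraising{q^{-n}}{q}{m}}{\qraising{q}{q}{m}}\,x^m .
\]
This equals $p(x)$ exactly when $\check p(1-q^m)=\frac{\qraising{q}{q}{m}}{\qraising{q^{-n}}{q}{m}}\,[x^m]p$. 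One point needs care here. The displayed condition in the paper is written with $e_k(p)$ at the node $1-q^k$, and under the convention \eqref{monicP} the coefficient of $x^k$ is $e_{n-k}(p)$. I will read the indices consistently as powers of $x$, so that the value at node $1-q^m$ is matched with $[x^m]p=e_{n-m}(p)$. Existence and uniqueness of $\check p$ of degree at most $n$ then follow from Lagrange interpolation at the $n+1$ distinct nodes.

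The second step is the main computation. Applying $\check r(x\D_q)$ first and then $\check p(x\D_q)$ to $E_{n,q}$, each monomial $x^m$ is multiplied by $\check p(1-q^m)\check r(1-q^m)$. The coefficient of $x^m$ in the result is therefore
\[
\frac{\qraising{q}{q}{m}}{\qraising{q^{-n}}{q}{m}}\,[x^m]p\,\cdot\frac{\qraising{q}{q}{m}}{\qraising{q^{-n}}{q}{m}}\,[x^m]r\,\cdot\frac{\qraising{q^{-n}}{q}{m}}{\qraising{q}{q}{m}}
=\frac{\qraising{q}{q}{m}}{\qraising{q^{-n}}{q}{m}}\,[x^m]p\,[x^m]r .
\]
With $m=n-k$, this is precisely \eqref{def:qmulticoeff}. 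Comparing coefficients for every $m$ gives the claimed identity. As a by-product, the argument shows that the two operators commute, since both are diagonal in the monomial basis.

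The only real obstacle is the bookkeeping between the index $k$ in $e_k$ (counted from the top degree) and the eigenvalue index $m$ (counted as the power of $x$). I will fix the convention once, with $m=n-k$, and verify it on the case $p=E_{n,q}$. There $\check p\equiv 1$ should hold, and it does, because $[x^m]E_{n,q}\cdot\frac{\qraising{q}{q}{m}}{\qraising{q^{-n}}{q}{m}}=1$. This check confirms that the identity element is consistent with \eqref{identityMult1}.
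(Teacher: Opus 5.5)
Your proof is correct and is the argument the paper intends. The paper writes out no proof and only points to Mirabelli's treatment of $\boxtimes_n$ via $x\partial_x$. As you show, $x\D_q$ is diagonal on monomials with distinct eigenvalues $1-q^m$, so both sides reduce to a coefficientwise comparison with \eqref{def:qmulticoeff}.

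You are also right that the interpolation condition defining $\check p$ must pair the node $1-q^k$ with the coefficient of $x^k$, namely $e_{n-k}(p)$, and not $e_k(p)$ as the paper prints it. Read literally, the printed condition would make $\check p(x\D_q)[E_{n,q}]$ the reversed polynomial rather than $p$.
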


\medskip

As it was observed by \cite{lamprecht2016suffridge}, the $q$-multiplicative convolution preserves the real-rootedness of the polynomials under some additional assumptions. We reformulate them here, taking into account the rescaling by $-q^{-n}$ in  Definition~\ref{def:qmultiplication} and the identity \eqref{identityMult2}. Hence (see \cite[Theorem 3]{lamprecht2016suffridge}):
\begin{proposition}   \label{prop:realrootedness}
    Let $p, r \in \P_n$. Then
    \begin{enumerate}[(i)]
        \item $p\in \P^q_n(\rr),\ r\in \P^q_n(\rr_{> 0}) \ \Rightarrow \ p\boxtimes_{n,q} r\in \P^q_n(\rr)$.
        \item $p,r\in \P^q_n(\rr_{> 0}) \ \Rightarrow \ p\boxtimes_{n,q} r\in \P^q_n(\rr_{> 0})$.
    \end{enumerate}
\end{proposition}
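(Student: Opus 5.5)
This is a transfer of \cite[Theorem 3]{lamprecht2016suffridge}, so the plan is to identify our $\boxtimes_{n,q}$ with a rescaled version of Lamprecht's convolution and follow how the hypotheses and conclusions behave under the rescaling. Write $\boxtimes^{L}$ for Lamprecht's operation. By the remark after Definition~\ref{def:qmultiplication}, our convolution differs from his only by the substitution $x\mapsto -q^{-n}x$. Concretely, there is a constant $c=-q^{-n}$ (or its inverse, to be fixed by comparing \eqref{def:qmulticoeff} with Lamprecht's coefficient formula) with
\[
(p\boxtimes_{n,q} r)(x)=(p\boxtimes^{L} r)(cx)\qquad\text{or}\qquad (p\boxtimes_{n,q} r)(x)=\bigl(p(cx)\boxtimes^{L} r(cx)\bigr)(x),
\]
possibly up to a nonzero constant factor. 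The first step is to pin down this identity exactly, by comparing coefficients: both operations are diagonal in the $e_k$.

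Once the identity is fixed, identity \eqref{identityMult2} lets us move all the scaling onto one factor. We get $p(\alpha x)\boxtimes_{n,q} r(\beta x)=p(x)\boxtimes_{n,q} r(\alpha\beta x)$, and likewise for Lamprecht's operation, since the argument only uses diagonality. This lets us rewrite $p\boxtimes_{n,q} r$ as a Lamprecht convolution of $p$ with $\tilde r(x):=r(\gamma x)$, followed by a dilation, for a suitable real $\gamma$.

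The key observation is that the logarithmic mesh is invariant under real dilations $x\mapsto \gamma x$ with $\gamma\neq 0$. A positive dilation preserves the ratios $\lambda_j/\lambda_{j+1}$. A negative dilation exchanges $\R_{>0}$ and $\R_{<0}$, and Definition~\ref{def:logarithmicmesh} defines the mesh on $\R_{<0}$ precisely through reflection. Hence the classes $\P^q_n(\R)$, $\P^q_n(\R_{>0})$ and $\P^q_n(\R_{<0})$ are permuted in a controlled way. Real-rootedness and mesh $<q$ on all of $\R$ are interpreted as in Lamprecht, i.e.\ $\P_n^q(\R)$ means real zeros with consecutive ratios of the appropriate sign controlled.

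We then check that Lamprecht's hypotheses translate into ours. For (i) this means $p$ real-rooted with mesh $<q$, and the other factor with zeros on the half-line he requires. For (ii) both factors must have zeros on that half-line. Then we read off his conclusion and undo the dilation.

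The main obstacle is sign bookkeeping. Lamprecht works with a normalization whose natural half-line may be $\R_{<0}$ rather than $\R_{>0}$, and the factor $-q^{-n}$ flips sides. One must verify that the composite dilation sends our positive-root class to his admissible class, and that it sends his output class back to $\P^q_n(\R_{>0})$ in (ii) and to $\P^q_n(\R)$ in (i). I would settle this with the test case $r=E_{n,q}$. By \eqref{identityMult1} its zeros are $q,\dots,q^n$, with mesh exactly $q$ (lying in the closure $\overline{\P_n^q}(\R_{>0})$), and it acts as the identity. This forces the sign convention: whatever dilation appears must map $E_{n,q}$ to Lamprecht's unit element, and that element lies in his positive class. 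The remaining details are degree drops, since $e_0$ may vanish, and these are handled by continuity or by Lamprecht's convention for polynomials of degree $<n$.
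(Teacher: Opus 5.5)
Your plan is essentially the paper's own argument. The paper gives no separate proof: it obtains the proposition by transporting \cite[Theorem 3]{lamprecht2016suffridge} through the rescaling $x\mapsto -q^{-n}x$ that relates the two normalizations, together with \eqref{identityMult2} and the dilation-invariance of the logarithmic mesh. Your use of the unit $E_{n,q}$ is exactly what fixes the sign. Because of the factor $-1$ in the rescaling, it must correspond to Lamprecht's unit, whose zeros are negative. So for (ii) you must reflect $p$ as well as rescale $r$, which your remark that both factors must lie on his half-line already anticipates.
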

Taking into account that $p(-x)=p\boxtimes_{n,q} E_{n,q}(-x)$ (see \eqref{identityMult3} with $\alpha=-1$), a simple consequence of this proposition is that additionally the following ``rule of signs'' applies:
\begin{itemize}
\item $p,r\in \P_{n}^q(\rr_{< 0}) \ \Rightarrow \ p\boxtimes_{n,q} r\in \P^q_n(\rr_{> 0})$,

\item $p\in \P^q_n(\rr_{<0}),\ r\in \P^q_n(\rr_{> 0}) \ \Rightarrow \ p\boxtimes_{n,q} r\in \P^q_n(\rr_{< 0})$.
\end{itemize}
The $q$-multiplicative convolution also preserves zero interlacing:
\begin{proposition}[Preservation of interlacing, {\cite[Theorems 29 and 30]{lamprecht2016suffridge}}]
\label{lem:preservinginterlacingMult}
Let $p, \widetilde p, r\in \pp_n^*(\rr) $. If $r\in \overline{\P^q_n}(\R_{< 0})$,  $p\in \overline{\P_n^q}(\R)$, $\widetilde p\in \P_n(\R)$ and $p \preccurlyeq \widetilde{p}$, then 
    $$ \widetilde p\boxtimes_{n,q} r\preccurlyeq  p\boxtimes_{n,q} r.$$
\end{proposition}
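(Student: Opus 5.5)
The plan is to reduce the statement to Lamprecht's original formulation \cite[Theorems 29 and 30]{lamprecht2016suffridge}, and then to check that the normalization change does not alter the direction of interlacing beyond what the statement already records. Write $\boxtimes^{L}_{n,q}$ for Lamprecht's convolution. By the remark after Definition~\ref{def:qmultiplication}, the two operations differ only by the dilation $x\mapsto -q^{-n}x$. Concretely, there is a constant $\gamma=-q^{-n}$ (or its inverse, depending on which side the rescaling is placed) with
\[
(p\boxtimes_{n,q} r)(x)=(p\boxtimes^{L}_{n,q} r)(\gamma x).
\]
I will verify this coefficientwise by comparing \eqref{def:qmulticoeff} with Lamprecht's coefficient formula. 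By \eqref{identityMult2}, the dilation can be moved onto $r$ alone: $r(x)\mapsto r(\gamma x)$.

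The key step is the sign bookkeeping. Since $\gamma<0$, the dilation sends $r\in\overline{\P^q_n}(\R_{<0})$ to a polynomial with positive zeros, and it does not change the logarithmic mesh, so the new polynomial lies in $\overline{\P^q_n}(\R_{>0})$. This is exactly the hypothesis in Lamprecht's interlacing theorem. Lamprecht's theorem gives that convolution with a fixed polynomial with positive zeros and lmesh $\le q$ preserves $\preccurlyeq$ in a definite direction. The remaining dilation by the negative factor $\gamma$, applied to the output, reverses the order of the zeros, and hence reverses $\preccurlyeq$. Together these give $\widetilde p\boxtimes_{n,q} r\preccurlyeq p\boxtimes_{n,q} r$. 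The hypothesis $p\in\overline{\P^q_n}(\R)$ passes through unchanged, because it is a condition on $p$ alone and $p$ is not rescaled.

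As a sanity check, or as an alternative direct argument, I would also use the Hermite--Kakeya--Obreschkoff criterion. For monic $p,\widetilde p$ of degree $n$, the relation $p\preccurlyeq\widetilde p$ is equivalent to real-rootedness of $\widetilde p+t\,p$ for all $t\in\R$, together with an orientation condition. By linearity of $\boxtimes_{n,q}$,
\[
(\widetilde p+tp)\boxtimes_{n,q} r=\widetilde p\boxtimes_{n,q} r+t\,(p\boxtimes_{n,q} r).
\]
The strict case then follows by approximation, and the non-strict case by a continuity/closure argument as the lmesh bound is relaxed from $<q$ to $\le q$.

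The main obstacle in this direct route is that Proposition~\ref{prop:realrootedness} requires the input $\widetilde p+tp$ to have lmesh $<q$, which is not guaranteed since $\widetilde p$ is only assumed real-rooted. This is precisely why I would rely on Lamprecht's proof, which exploits the mesh condition on $p$ together with the positivity structure of $r$, rather than on real-rootedness preservation alone. In the reduction route the only delicate point is getting the orientation of $\preccurlyeq$ right after the negative dilation, and I would check this on the trivial case $r(x)=E_{n,q}(-x)$. In that case, by \eqref{identityMult3}, the convolution is simply $p(x)\mapsto p(-x)$, which visibly reverses interlacing.
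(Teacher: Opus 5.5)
Your route, pulling Lamprecht's Theorems 29 and 30 back through the negative rescaling that relates his convolution to $\boxtimes_{n,q}$, is the paper's own: the paper gives no argument beyond the citation and the remark after Definition~\ref{def:qmultiplication}. However, the one step with real content, the orientation bookkeeping, is wrong as written, because you use the single dilation $\gamma$ twice. Both operations are coefficientwise products, so $(p\boxtimes^{L}_{n,q} r)(\gamma x)=p\boxtimes^{L}_{n,q} r(\gamma\,\cdot)$, exactly as in \eqref{identityMult2}. Thus ``dilate the output'' and ``dilate $r$'' are one and the same identity. Once you have moved $\gamma$ onto $r$, there is no remaining dilation of the output left to reverse anything.

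This error changes the conclusion. From $p\boxtimes_{n,q}E_{n,q}=p$ and \eqref{identityMult3}, the unit of $\boxtimes^{L}_{n,q}$ is $E_{n,q}(\gamma x)$, whose zeros $q^k/\gamma$ are negative. So in Lamprecht's normalization the negative-zero multipliers act like the unit, and positive-zero multipliers reverse interlacing. Your own test case shows this. For $r=E_{n,q}(-x)$ you get $\tilde r:=r(\gamma\,\cdot)=E_{n,q}(-\gamma x)$. This is his unit evaluated at $-x$; it has positive zeros and acts by $p\mapsto p(-x)$. That is already a reversal, with nothing left to dilate.

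Hence, if you feed $\tilde r\in\overline{\P^q_n}(\R_{>0})$ into his theorem and then reverse once more for the ``output,'' you get $p\boxtimes_{n,q} r\preccurlyeq \widetilde p\boxtimes_{n,q} r$, the opposite of the claim. Your chain reaches the stated direction only if the positive-zero case of his theorem is read as orientation-preserving. That is the orientation in the present paper's normalization (Remark~\ref{preserveinterlacing.negativezeros}), not in his.

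The fix is to commit to one picture:
\begin{itemize}
\item either apply his theorem to $\tilde r$ (positive zeros), which directly gives $\widetilde p\boxtimes^{L}_{n,q}\tilde r\preccurlyeq p\boxtimes^{L}_{n,q}\tilde r$, i.e.\ the claim, and stop;
\item or apply it to $r$ itself (negative zeros, the sign class of his unit), obtain $p\boxtimes^{L}_{n,q} r\preccurlyeq \widetilde p\boxtimes^{L}_{n,q} r$, and then reverse exactly once via the output dilation by $\gamma<0$.
\end{itemize}
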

\begin{remark}\label{preserveinterlacing.negativezeros}
Noticing that $p \preccurlyeq r$ if and only if $r(-x) \preccurlyeq p(-x)$ and that $p(x) \boxtimes_{n,q} r(-x)=[p\boxtimes_{n,q} r] (-x)$, we can easily extend Proposition \ref{lem:preservinginterlacingMult} to include polynomials with positive real roots. Namely,  if $p\in\overline{\P_n^q}(\R)$, $\widetilde p\in\P_n(\R)$  and $p \preccurlyeq \widetilde{p}$, then for a polynomial $r$ of degree $n$, 
$$
r\in\overline{\P_n^q}(\R_{> 0}) \quad  \Rightarrow \quad p \boxtimes_{n,q} r \preccurlyeq \widetilde p \boxtimes_{n,q} r.
$$
\end{remark}
Let us now formulate two simple properties of the $q$-multiplicative convolution.
\begin{corollary}\label{cor:lmesh}
 If polynomials $p$, $r$ satisfy  $p\in\overline{\P_n^{q_1}}(\R_{> 0})$ and $r\in\overline{\P_n^{q_2}}(\R_{> 0})$, with $q_1\leq q_2<1$, then $$p\boxtimes_{n,q_2}r\in\overline{\P_n^{q_1}}(\R_{> 0}).$$  
\end{corollary}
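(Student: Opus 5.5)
Since $q_1\le q_2$, we have $p\in\overline{\P_n^{q_1}}(\R_{>0})\subset\overline{\P_n^{q_2}}(\R_{>0})$. The plan is to apply the closed-mesh version of Proposition~\ref{prop:realrootedness}(ii) with parameter $q_2$ to see that $p\boxtimes_{n,q_2}r$ has positive zeros. The stronger mesh bound $q_1$ should then come from interlacing preservation. By the characterization \eqref{lmeshRplus} in its closed form, $p\in\overline{\P_n^{q_1}}(\R_{>0})$ is equivalent to $p(x)\preccurlyeq p(q_1x)$. So it suffices to show
\[
[p\boxtimes_{n,q_2}r](x)\preccurlyeq [p\boxtimes_{n,q_2}r](q_1x).
\]

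The key steps run as follows.

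\emph{Step 1.} Take $\widetilde p(x)=p(q_1x)$. It is real-rooted, and $p\preccurlyeq\widetilde p$ holds by hypothesis. Moreover $p\in\overline{\P^{q_2}_n}(\R)$, because $\lmesh p\le q_1\le q_2$.

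\emph{Step 2.} Apply Remark~\ref{preserveinterlacing.negativezeros} with $q=q_2$ and $r\in\overline{\P_n^{q_2}}(\R_{>0})$. This gives
\[
p\boxtimes_{n,q_2}r\preccurlyeq \widetilde p\boxtimes_{n,q_2}r.
\]

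\emph{Step 3.} By \eqref{identityMult2}, $\widetilde p\boxtimes_{n,q_2}r=[p\boxtimes_{n,q_2}r](q_1x)$.

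\emph{Step 4.} Conclude $\lmesh(p\boxtimes_{n,q_2}r)\le q_1$ via the closed version of \eqref{lmeshRplus}.

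The monic/degree normalizations required in Proposition~\ref{lem:preservinginterlacingMult} are harmless: they only need rescaling by constants. Since $e_0$ of the convolution is a nonzero multiple of $e_0(p)e_0(r)$, the degree stays exactly $n$.

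The main obstacle is the closed-mesh version of positivity. Proposition~\ref{prop:realrootedness} is stated for strict mesh $<q$, while here $\lmesh r\le q_2$ is allowed. I would handle this by approximation. Perturb $r$ to $r_\varepsilon$ with zeros pushed apart so that $\lmesh r_\varepsilon<q_2$, and similarly perturb $p$. Apply the strict statement, then pass to the limit $\varepsilon\to0$. This works because the convolution is continuous in the coefficients, and limits of real-rooted degree-$n$ polynomials with positive zeros have nonnegative real zeros.

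Positivity must also survive the limit, since $\lmesh$ is defined only on $\R_{>0}$. Here I would note that $e_n(p\boxtimes_{n,q_2}r)$ is a nonzero multiple of $e_n(p)e_n(r)\neq0$. So $0$ is not a root, and the limiting zeros are strictly positive. Interlacing in the weak sense $\preccurlyeq$ is likewise closed under limits, so Step 2 also survives the approximation if it is needed there.
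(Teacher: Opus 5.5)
Your argument is correct and is essentially the paper's proof: start from $p(x)\preccurlyeq p(q_1x)$, push the interlacing through $\boxtimes_{n,q_2} r$, and use \eqref{identityMult2} to recognize the right-hand side as $[p\boxtimes_{n,q_2}r](q_1x)$. Citing Remark~\ref{preserveinterlacing.negativezeros} rather than Proposition~\ref{lem:preservinginterlacingMult} (which is stated for $r$ with negative zeros) is the more accurate reference, and your approximation step for positivity is sound but dispensable: $f\preccurlyeq f(q_1x)$ already forces all zeros of $f=p\boxtimes_{n,q_2}r$ to be nonnegative, and $e_n(f)=e_n(p)e_n(r)\neq 0$ rules out $0$.
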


\begin{proof}
    Let $p\in\overline{\P_n^{q_1}}(\R_{> 0})$ and $r\in\overline{\P_n^{q_2}}(\R_{> 0})$, then by \eqref{lmeshRplus}, we have 
    $$
    p(x)\preccurlyeq p(q_1x)
    $$
    Taking the $q_2$-multiplicative convolution of $p(x)$ and $p(q_1x)$ with $r(x)$ and using Proposition \ref{lem:preservinginterlacingMult}, we get 
     $$
    p\boxtimes_{n,q_2}r(x)\preccurlyeq p(q_1x)\boxtimes_{n,q_2}r(x)=p\boxtimes_{n,q_2}r(q_1x),
    $$
    which is equivalent to 
$$p\boxtimes_{n,q_2}r\in\overline{\P_n^{q_1}}(\R_{> 0}).$$
\end{proof}

Finally, one can check that $q$-differentiation commutes with $q$-multiplicative convolution up to a constant. This can be seen as a $q$-multiplicative analog of \cite[Remark 3.3]{arizmendi2026s}.
\begin{corollary}
The $q$-multiplicative convolution satisfies that for $p, r\in \pp_n $,
$$\D_q (p\boxtimes_{n,q} r) =\left(1-q^{-n}\right) \D_q (p)\boxtimes_{n-1,q} \D_q (r).
$$
\end{corollary}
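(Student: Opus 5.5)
The plan is to prove the identity by comparing coefficients in the convention \eqref{monicP}. Both sides are bilinear in $(p,r)$ and Definition~\ref{def:qmultiplication} acts diagonally on the coefficients, so it suffices to compare, for each $k$, the coefficient $e_k$ of $x^{n-1-k}$ on the two sides, with $k=0,\dots,n-1$.

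\textbf{Step 1: the $q$-derivative on coefficients.} From \eqref{def:qderivative} we have $\D_q x^m=(1-q^m)x^{m-1}$. Hence, regarding $\D_q p$ as an element of $\P_{n-1}$,
\[
e_j(\D_q p)=(1-q^{n-j})\,e_j(p), \qquad j=0,\dots,n-1 .
\]
The same formula holds for $r$ and for $p\boxtimes_{n,q}r$.

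\textbf{Step 2: expand both sides.} Write $c_{n,k}:=\qraising{q}{q}{n-k}/\qraising{q^{-n}}{q}{n-k}$ for the weight in \eqref{def:qmulticoeff}. By Step~1 and \eqref{def:qmulticoeff}:
\begin{itemize}
\item the left-hand side has $k$-th coefficient $(1-q^{n-k})\,c_{n,k}\,e_k(p)e_k(r)$;
\item the right-hand side has $k$-th coefficient $(1-q^{-n})\,c_{n-1,k}\,(1-q^{n-k})^2\,e_k(p)e_k(r)$.
\end{itemize}
So the statement reduces to the scalar identity
\[
c_{n,k}\;\overset{?}{=}\;(1-q^{-n})(1-q^{n-k})\,c_{n-1,k},\qquad 0\le k\le n-1 .
\]

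\textbf{Step 3: check the scalar identity.} I would use the two peeling relations
\[
\qraising{q}{q}{n-k}=(1-q^{n-k})\qraising{q}{q}{n-1-k},\qquad \qraising{q^{-n}}{q}{n-k}=(1-q^{-n})\qraising{q^{1-n}}{q}{n-1-k}.
\]
The first removes the last factor of the numerator and the second removes the first factor of the denominator. Together they give
\[
c_{n,k}=\frac{1-q^{n-k}}{1-q^{-n}}\,c_{n-1,k}.
\]

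\textbf{The main obstacle is the constant, and it does not match.} The factor $(1-q^{n-k})$ cancels correctly. However, the factor $1-q^{-n}$ that comes out of this computation sits in the \emph{denominator}, so coefficient comparison gives
\[
\D_q(p\boxtimes_{n,q}r)=(1-q^{-n})^{-1}\,\D_q(p)\boxtimes_{n-1,q}\D_q(r),
\]
not the factor $(1-q^{-n})$ written in the statement. I would then recheck the conventions: the normalization of $\D_q$ in \eqref{def:qderivative}, and that $\boxtimes_{n-1,q}$ uses $c_{n-1,k}$ with $q^{-(n-1)}$. Using $\mathfrak{D}_q$ in place of $\D_q$ would not fix this either; it only rescales both sides by powers of $(1-q)$.

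For a concrete check I would take $n=1$ and $p=r=x$. Then
\[
c_{1,0}=\frac{1-q}{1-q^{-1}}=-q,\qquad p\boxtimes_{1,q}r=-qx,\qquad \D_q(p\boxtimes_{1,q}r)=-q(1-q).
\]
On the other side $\D_qp=\D_qr=1-q$ and $\boxtimes_{0,q}$ is ordinary multiplication, so $\D_q p\boxtimes_{0,q}\D_q r=(1-q)^2$. Since $(1-q^{-1})^{-1}=-q/(1-q)$, the left-hand side equals
\[
(1-q^{-1})^{-1}(1-q)^2=-q(1-q),
\]
whereas $(1-q^{-1})(1-q)^2=-q^{-1}(1-q)^3$.

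So, as literally worded with the factor $(1-q^{-n})$, the identity fails, and no choice of plan can establish it. The argument above proves the commutation of $\D_q$ with $\boxtimes_{n,q}$ with the constant $(1-q^{-n})^{-1}$; I would present that as the correct version of the statement.
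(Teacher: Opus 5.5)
Your computation is correct and uses the same coefficient comparison as the paper; the stated constant is indeed wrong. The paper records how $\D_q$ acts on coefficients, with a typo ($(1-q^{i})$ in place of $(1-q^{n-i})$), and then asserts that the conclusion follows from the definition of $\boxtimes_{n,q}$ without writing out the constant. Carried out, it gives your relation $c_{n,k}=\frac{1-q^{n-k}}{1-q^{-n}}\,c_{n-1,k}$. So the valid identity is $(1-q^{-n})\,\D_q(p\boxtimes_{n,q}r)=\D_q(p)\boxtimes_{n-1,q}\D_q(r)$, with the factor on the other side.

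The stated version holds only when $(1-q^{-n})^2=1$, i.e.\ $q=2^{-1/n}$; for instance, your $n=1$ check happens to balance at $q=1/2$. A check that avoids the degenerate $\boxtimes_{0,q}$ is $n=2$, $p=r=x^2$: there $\D_q(p\boxtimes_{2,q}r)=q^3(1-q^2)x$ while $\D_q p\boxtimes_{1,q}\D_q r=-q(1-q^2)^2x$, whose ratio is $(1-q^{-2})^{-1}$.
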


\begin{proof}
Notice that for $p\in\P_n$ in the form \eqref{monicP}, using \eqref{derivadaxn} below, we get 
\begin{equation*}
    \D_q p(x)=\sum_{i=0}^{n-1}(1-q^{n-i})e_i(p)x^{n-i-1},
\end{equation*}
so that
\begin{equation*}
    e_i^{(n-1)}(\D_qp)=(1-q^{i})e_{i}^{(n)}(p),
\end{equation*}
where the superscripts $(n-1)$ and $(n)$ are just to emphasize that the dependence of the degree in the coefficient notation \eqref{monicP}. The conclusion now follows from the definition of $\boxtimes_{n,q}$.
\end{proof}

\subsection{\texorpdfstring{Application to $q$-hypergeometric polynomials}{Application to q-hypergeometric polynomials}}\

Our main contribution in Section \ref{sec:3} stems from the observation that the $q$-multiplicative convolution of $q$-hypergeometric polynomials preserves their character, and that the parameters of the resulting $q$-hypergeometric polynomial are obtained by simple concatenation. This is the $q$-analog of the result obtained in \cite{MR4789095}:
\begin{theorem}\label{Theorem:qmultiplicative}
    Let $\bm a_k\in\R^{i_k}$, $\bm b_k\in\R^{j_k}$, $k=1,2$, 
    $$
    p_1(x)=\pQq{i_1+1}{j_1}{q^{-n},\bm a_1}{\bm b_1}{q}{x} \quad \text{ and } \quad p_2(x)=\pQq{i_2+1}{j_2}{q^{-n},\bm a_2}{\bm b_2}{q}{x}.
    $$
   Then
    $$
    p_1(x)\boxtimes_{n,q} p_2(x)=\pQq{i_1+i_2+1}{j_1+j_2+1}{q^{-n},\bm a_1,\bm a_2}{\bm b_1,\bm b_2}{q}{x}
    $$
\end{theorem}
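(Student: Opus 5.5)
The plan is a direct comparison of coefficients: Definition~\ref{def:qmultiplication} acts diagonally on the coefficients, and the $q$-hypergeometric coefficients in \eqref{def:qpolynomial1} factor multiplicatively in the parameters. First I will rewrite each $p_\ell$, $\ell=1,2$, in the notation \eqref{monicP}. Since $e_{n-k}(p)$ is the coefficient of $x^{k}$, \eqref{def:qpolynomial1} gives
\begin{equation*}
e_{n-k}(p_\ell)=(-1)^{(j_\ell-i_\ell)k}\,q^{(j_\ell-i_\ell)\binom{k}{2}}\,\frac{\qraising{q^{-n}}{q}{k}}{\qraising{q}{q}{k}}\,\frac{\qraising{\bm a_\ell}{q}{k}}{\qraising{\bm b_\ell}{q}{k}},\qquad k=0,1,\dots,n .
\end{equation*}
This is well defined because each $\bm b_\ell$ satisfies \eqref{constrainB}.

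Next I will apply \eqref{def:qmulticoeff} with index $n-k$. Its prefactor is $\qraising{q}{q}{k}/\qraising{q^{-n}}{q}{k}$, which is finite and nonzero for $0\le k\le n$ because $q^{-n}q^{j}\neq 1$ for $j<n$. Multiplying the two coefficient expressions by this prefactor, I expect the following simplifications:
\begin{itemize}
\item the squared factor $\bigl(\qraising{q^{-n}}{q}{k}/\qraising{q}{q}{k}\bigr)^2$ collapses to a single copy $\qraising{q^{-n}}{q}{k}/\qraising{q}{q}{k}$;
\item the vector Pochhammer symbols combine into $\qraising{(\bm a_1,\bm a_2)}{q}{k}/\qraising{(\bm b_1,\bm b_2)}{q}{k}$, by the definition of $\qraising{\bm a}{q}{k}$ as a product over the entries;
\item the sign and $q$-power factors combine into $(-1)^{(s-r)k}q^{(s-r)\binom{k}{2}}$ with $s-r=(j_1+j_2)-(i_1+i_2)$, since their exponents are linear in $j_\ell-i_\ell$.
\end{itemize}
The resulting expression is exactly the coefficient of $x^k$ in the $q$-hypergeometric polynomial \eqref{def:qpolynomial1}. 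Its upper parameters are $q^{-n},\bm a_1,\bm a_2$ and its lower parameters are $\bm b_1,\bm b_2$. Since $(\bm b_1,\bm b_2)$ again satisfies \eqref{constrainB}, the right-hand side is a genuine polynomial, and the two polynomials agree coefficient by coefficient.

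The only delicate point is bookkeeping. One must track the index shift between $e_{n-k}$ and the power $x^k$, so that the prefactor in \eqref{def:qmulticoeff} at index $n-k$ is $\qraising{q}{q}{k}/\qraising{q^{-n}}{q}{k}$ and not its reciprocal. One must also check that the normalizing factor $(-1)^{(s-r)k}q^{(s-r)\binom{k}{2}}$ is determined by the number of lower parameters minus the number of non-terminating upper parameters. This difference is additive under concatenation, which is why the parameter vectors simply concatenate. In particular, the lower index in the displayed ${}_{\cdot}\phi_{\cdot}$ should be read as the total number $j_1+j_2$ of lower parameters $(\bm b_1,\bm b_2)$, and the normalization factor is the one attached to that count. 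No analytic input is needed: the identity is purely algebraic and holds for all admissible real parameters.
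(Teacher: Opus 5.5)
Your proposal is correct and is the same coefficient-by-coefficient computation the paper uses; the paper's displayed proof writes $e_k$ while using the formula for the coefficient of $x^k$, a harmless relabeling that your version tracks correctly via $e_{n-k}$ and the prefactor $\qraising{q}{q}{k}/\qraising{q^{-n}}{q}{k}$. You are also right that the lower subscript should be $j_1+j_2$, not $j_1+j_2+1$. This matches the normalization $(-1)^{(j_1+j_2-i_1-i_2)k}q^{(j_1+j_2-i_1-i_2)\binom{k}{2}}$ produced by the paper's own computation and the ${}_2\phi_2$ and ${}_3\phi_2$ applications in Tables~\ref{tab:2Q2} and \ref{tab:3Q2}.
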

\begin{proof}
   This is a straightforward computation: the $k$-th coefficient of $p_1\boxtimes_{n,q}p_2$ is
    \begin{align*}
        e_k(p_1\boxtimes_{n,q} p_2) & =\frac{\qraising{q}{q}{n-k}}{\qraising{q^{-n}}{q}{n-k}} e_k(p_1) e_k(p_2) \\
        &=(-1)^{(j_1+j_2-i_1-i_2)k}q^{(j_1+j_2-i_1-i_2)\binom{k}{2}}\frac{\qraising{q^{-n}}{q}{k} }{\qraising{q}{q}{k}}\frac{ \qraising{\bm a_1}{q}{k}\qraising{\bm a_2}{q}{k}}{\qraising{\bm b_1}{q}{k}\qraising{\bm b_2}{q}{k}}
    \end{align*}
     which coincides with the expression for the $k$-th coefficient of
    $$
    \pQq{i_1+i_2+1}{j_1+j_2+1}{q^{-n},\bm a_1,\bm a_2}{\bm b_1,\bm b_2}{q}{x}.
    $$
\end{proof}

As a direct consequence of Theorem \ref{Theorem:qmultiplicative} and the results of Section \ref{propertiesqhyper}, we can construct $q$-hypergeometric polynomials with real roots and logarithmic mesh  $\le q$ (see Definition \ref{def:logarithmicmesh}). 

For instance, the $q$-multiplicative convolution of $_2\phi_1$ (with parameters from rows $1$--$6$ of Table~\ref{tab:2Q1}) and $_1\phi_1$ (with parameters from rows $8$--$10$ of the same table) yields information about zero location and lmesh of $_2\phi_2$ polynomials, collected in Table \ref{tab:2Q2}. In particular, row 2 of Table \ref{tab:2Q2} extends the results from \cite[Section 2.1]{kar2020quasi}.

\begin{table}[h]
    \centering
\begin{tabular}{|c|c|c|c|c|c|}
\hline \textbf{Row} & $a$ & $b_1$ & $b_2$ & \textbf{Roots in} & $\lmesh$ \\
\hline \hline 1 & $\R_{<b_1q^{n-1}}$ & $(0,1)$ & $\R_{<1}$ & $\R_{<0}$ & $<q$ \\
\hline 2 & $b_1q^{n-k} $ & $(0,1)$ & $\R_{<1}$ & $\R_{<0}$ & $\leq q$ \\
\hline 3 & $\R_{>q^{1-n}}$ & $\R_{<0}$ & $\R_{<1}$ & $\R_{>0}$ & $<q$ \\
\hline 4 & $(q^{1-n},b_1q^{n+1})$ & $\R_{>q^{-2n+2}}$ &$\R_{<1}$ &  $\R_{<0}$  & $<q$ \\
\hline 5 & $\R_{<0}$ & $\{0\}$ &$\R_{<1}$ & $\R_{<0}$ & $<q$\\
\hline 6 &  $\R_{>q^{1-n}}$  & $\{0\}$ &$\R_{<1}$ & $\R_{>0}$  & $<q$ \\
\hline 7 & $\R_{>q^{1-n}}$ & $\R_{<1}$ & $\R_{<1}$ &{$\R_{> 0}$} & $<q$ \\
\hline
\end{tabular}
\vspace{2mm}
\caption{Real zeros and logarithmic mesh of $\pQq{2}{2}{q^{-n},a}{b_1,b_2}{q}{x}$.} 
    \label{tab:2Q2}
\end{table}   

As the next step, we can apply the $q$-multiplicative convolution of $_2\phi_1$ (with parameters from Table \ref{tab:2Q2}) and $_2\phi_0$ polynomials (with parameters from row 7 of Table \ref{tab:2Q1}) to get information about zero location and lmesh of $_3\phi_2$ polynomials, collected in Table \ref{tab:3Q2}. In particular, row $2$ extends the results from \cite[Section 3]{kar2020quasi}.  

\begin{table}[h]
    \centering
\begin{tabular}{|c|c|c|c|c|c|c|}
\hline \textbf{Row} &$a_1$ & $a_2$ &$b_1$ & $b_2$ & \textbf{Roots in} & $\lmesh$ \\
\hline \hline 1 & $\R_{<b_1q^{n-1}}$ & $(q^{1-n},\infty)$ & $(0,1)$ & $\R_{<1}$ & $\R_{<0}$ & $<q$ \\
\hline 2 & $b_1q^{n-k} $ & $(q^{1-n},\infty)$ &$(0,1)$ & $\R_{<1}$ & $\R_{<0}$ & $\leq q$ \\
\hline 3 & $\R_{>q^{1-n}}$ & $(q^{1-n},\infty)$ &$\R_{<0}$ & $\R_{<1}$ & $\R_{>0}$ & $<q$ \\
\hline 4 & $(q^{1-n},b_1q^{n+1})$ & $(q^{1-n},\infty)$ & $\R_{>q^{-2n+2}}$ &$\R_{<1}$ &  $\R_{<0}$  & $<q$ \\
\hline 5 & $\R_{<0}$ &$(q^{1-n},\infty)$ & $\{0\}$ &$\R_{<1}$ & $\R_{<0}$ & $<q$\\
\hline 6 &  $\R_{>q^{1-n}}$  &$(q^{1-n},\infty)$ & $\{0\}$ &$\R_{<1}$ & $\R_{>0}$  & $<q$ \\
\hline 7 & $\R_{>q^{1-n}}$ &$(q^{1-n},\infty)$ & $\R_{<1}$ & $\R_{<1}$ &{$\R_{> 0}$} & $<q$ \\
\hline
\end{tabular}
\vspace{2mm}
\caption{
Real zeros and logarithmic mesh of $\pQq{3}{2}{q^{-n},\ a_1,\ a_2}{b_1,\ b_2}{q}{x}$. In each appearance in the table above, $k\in \{0,1,\dots,n-1\} $. Recall the polynomial is of degree exactly $n$.} 
    \label{tab:3Q2}
\end{table}   

Table \ref{tab:3Q2} is not exhaustive; it serves rather as an illustration of the information that can be obtained using the technique based on Theorem \ref{Theorem:qmultiplicative}.
 
Clearly, we can go beyond the $_2\phi_2$ and $_3\phi_2$ families to obtain results for polynomials with an arbitrary number of parameters. We can do it iteratively, each time adding a parameter above or below (in the notation \eqref{def:qpolynomial1}) to a real-rooted $q$-hypergeometric polynomial, preserving the location and the upper bound for the logarithmic mesh of the roots. 

\begin{corollary}
    Let $\bm{a}\in\R^{i}$ and $\bm{b}\in\R^j$ be such that
    $$
    \pQq{i+1}{j}{q^{-n},\bm a}{\bm b}{q}{x}\in\P^q_n(\R).
    $$
    Then,
    \begin{enumerate}
        \item for $0\leq \gamma<1$,
        \begin{equation}
        \pQq{i+1}{j+1}{q^{-n},\bm a}{\bm b, \gamma}{q}{x}\in\overline{\P^{q^2}_n}(\R);
        \end{equation}
        \item for $\gamma<0$,
        \begin{equation}
        \pQq{i+1}{j+1}{q^{-n},\bm a}{\bm b, \gamma}{q}{x}\in\P^{q}_n(\R).
        \end{equation}
        \end{enumerate}
         Moreover, for $0<\gamma<1$ and $q^2\leq t<1$, 
        \begin{equation*}
        \pQq{i+1}{j}{q^{-n},\bm a}{\bm b}{q}{x}\in\P^q_n(\R_{>0})\quad \Longrightarrow \quad \pQq{i+1}{j+1}{q^{-n},\bm a}{\bm b, t\gamma}{q}{x}\prec \pQq{i+1}{j+1}{q^{-n},\bm a}{\bm b, \gamma }{q}{x}.
    \end{equation*}
\end{corollary}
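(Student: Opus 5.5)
The plan is to realize the new polynomial as a $q$-multiplicative convolution and then invoke the real-rootedness and interlacing results of Lamprecht recalled above. Write $p(x)=\pQq{i+1}{j}{q^{-n},\bm a}{\bm b}{q}{x}$ and $g_\gamma(x)=\pQq{1}{1}{q^{-n}}{\gamma}{q}{x}$. By the concatenation of parameters in Theorem~\ref{Theorem:qmultiplicative}, applied with $(i_2,j_2)=(0,1)$ and $\bm b_2=(\gamma)$, we have
$$
\pQq{i+1}{j+1}{q^{-n},\bm a}{\bm b,\gamma}{q}{x}=p\boxtimes_{n,q} g_\gamma .
$$
Throughout, I read $p\in\P^q_n(\R)$ as saying that all zeros of $p$ have a common sign, since otherwise $\lmesh$ is not defined. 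I then split into the cases $p\in\P^q_n(\R_{>0})$ and $p\in\P^q_n(\R_{<0})$. The second case reduces to the first via $p(-x)=p\boxtimes_{n,q}E_{n,q}(-x)$ and \eqref{identityMult2}.

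\emph{Item (2), $\gamma<0$.} By row 10 of Table~\ref{tab:2Q1}, $g_\gamma\in\P^q_n(\R_{<0})$. Proposition~\ref{prop:realrootedness}, together with the rule of signs stated after it, gives directly $p\boxtimes_{n,q}g_\gamma\in\P^q_n(\R)$. The zeros are positive or negative according to the sign of the zeros of $p$.

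\emph{Item (1), $0\le\gamma<1$.} By rows 8--9 of Table~\ref{tab:2Q1}, $g_\gamma\in\overline{\P^{q^2}_n}(\R_{<0})$. Set $\tilde g(x)=g_\gamma(-x)\in\overline{\P^{q^2}_n}(\R_{>0})$. By \eqref{lmeshRplus} (the closed version), $\tilde g(x)\preccurlyeq\tilde g(q^2x)$. Since $q^2<q$, we also have $\tilde g\in\overline{\P^q_n}(\R)$. We now apply Remark~\ref{preserveinterlacing.negativezeros} if $p$ has positive zeros, or Proposition~\ref{lem:preservinginterlacingMult} if $p$ has negative zeros. In either case the interlacing pair is $\tilde g\preccurlyeq\tilde g(q^2\cdot)$ and the fixed factor is $p\in\overline{\P^q_n}$. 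This yields an interlacing (in one direction or the other, depending on the sign) between $h(x)$ and $h(q^2x)$, where $h=p\boxtimes_{n,q}\tilde g=(p\boxtimes_{n,q}g_\gamma)(-x)$ by \eqref{identityMult2}. Real-rootedness of $h$ with zeros of one sign comes from Proposition~\ref{prop:realrootedness}. The characterization \eqref{lmeshRplus}, with $q$ replaced by $q^2$ and in its closed form, then gives $\lmesh h\le q^2$. Since reflecting $x\mapsto -x$ does not change $\lmesh$, the same bound holds for $p\boxtimes_{n,q}g_\gamma$. This is exactly the argument of Corollary~\ref{cor:lmesh}, with the roles of the two factors arranged so that the $q^2$-mesh factor is the one whose interlacing is transported.

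\emph{Interlacing statement.} For $0<\gamma<1$ and $q^2\le t<1$, \eqref{qinterlacing1} gives $g_{t\gamma}\preccurlyeq g_\gamma$. Moreover, $g_{t\gamma}\in\overline{\P^{q^2}_n}(\R_{<0})\subset\overline{\P^q_n}(\R)$. Convolving both polynomials with $p\in\P^q_n(\R_{>0})$ and using Remark~\ref{preserveinterlacing.negativezeros} gives
$$
p\boxtimes_{n,q}g_{t\gamma}\preccurlyeq p\boxtimes_{n,q}g_\gamma ,
$$
which, by the concatenation identity, is the desired relation in its weak form.

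The main obstacle is upgrading $\preccurlyeq$ to the strict $\prec$. I would first check whether \eqref{qinterlacing1} is strict for $\gamma>0$: the two polynomials have no common zero, since their coefficient ratios differ. I would then use the strict version of Lamprecht's Theorems 29--30, in which strict input interlacing is preserved when the fixed factor has $\lmesh<q$ strictly, as is the case here for $p$. If that strict version is unavailable, the fallback is to rule out common zeros directly. The outputs have simple zeros, because $\lmesh<q$ by item (1) together with Proposition~\ref{prop:realrootedness}. A common zero would then have to persist under small perturbations of $t$, and this would contradict the strict monotonic dependence of the zeros on $t$ that follows from the weak interlacing.
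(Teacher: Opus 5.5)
Your reduction is the route the paper intends. The paper gives no separate proof and presents the corollary as a consequence of Theorem~\ref{Theorem:qmultiplicative}, Table~\ref{tab:2Q1} and the preservation results. Your steps are: concatenation to get $p\boxtimes_{n,q}g_\gamma$; rows 8--10 of the table; Proposition~\ref{prop:realrootedness} with the rule of signs for (2); the argument of Corollary~\ref{cor:lmesh}, with $\tilde g$ as the factor whose interlacing is transported, for (1); and \eqref{qinterlacing1} with Remark~\ref{preserveinterlacing.negativezeros} for the last claim. All of this is correct, but it only proves the last claim with $\preccurlyeq$.

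\textbf{The gap is the upgrade to $\prec$.} Neither of your two routes works as written.
\begin{itemize}
\item First route. Non-proportional polynomials can share a zero, so ``their coefficient ratios differ'' proves nothing. Also, no strict form of Lamprecht's theorem is among the available tools: Proposition~\ref{lem:preservinginterlacingMult}, like \eqref{qinterlacing1}, is stated only with $\preccurlyeq$.
\item Fallback. Put $F_s:=p\boxtimes_{n,q}g_{s\gamma}$. The weak interlacings $F_s\preccurlyeq F_{s'}$ for $q^2s'\le s<s'$ give only \emph{weak} monotonicity of each zero in $s$, not the strict monotonicity you claim. You also do not explain why a common zero would have to ``persist'' under perturbation of $t$.
\end{itemize}

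A repair for most of the range goes as follows.
\begin{itemize}
\item The zeros of $F_s$ are simple by (1), hence real-analytic in $s$. So each zero is either strictly increasing in $s$ or constant.
\item Constancy at some $z$ is impossible. The coefficient of $x^m$ in $F_s$ is $C_m/(s\gamma;q)_m$, with $C_m$ independent of $s$ and $C_0=1$. Consider $\sum_m C_m z^m/(u;q)_m$ as a rational function of $u=s\gamma$. If $C_mz^m\neq0$ for some $m\ge1$, the largest such $m$ gives a pole at $u=q^{1-m}$ that no other term has; otherwise the sum equals $1$. Either way it cannot vanish identically.
\item Strict monotonicity then gives $\lambda_k(F_t)<\lambda_k(F_1)$.
\item An equality $\lambda_k(F_1)=\lambda_{k+1}(F_t)$ contradicts $F_{t'}\preccurlyeq F_1$ for any $t'\in[q^2,t)$, since $\lambda_{k+1}(F_{t'})<\lambda_{k+1}(F_t)$.
\end{itemize}

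This yields $\prec$ only for $q^2<t<1$. At $t=q^2$ there is no admissible $t'<t$, because \eqref{qinterlacing1} needs $t_1\ge q^2$, and your argument gives nothing there. The endpoint needs new input, for example strictness of \eqref{qinterlacing1} at $t_1=q^2$ together with a strict version of Proposition~\ref{lem:preservinginterlacingMult}. Note that the paper itself asserts $\prec$ while every interlacing ingredient it cites is non-strict, so it does not supply this step either.
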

On the other hand, convolution with $_2\phi_1$ (see row 1 of Table~\ref{tab:2Q1}) and the interlacing properties \eqref{qinterlacing3}, \eqref{qinterlacing4} lead to the following result:
\begin{corollary}\label{theorem:general}
    Let $\bm{a}\in\R^{i}$, $\bm{b}\in\R^j$, $0<\beta<1$ and $\alpha<\beta q^{n-1}$. Then 
    \begin{equation*}
        \pQq{i+1}{j}{q^{-n},\bm a}{\bm b}{q}{x}\in\P^q_n(\R)\quad \Longrightarrow \quad \pQq{i+1}{j+1}{q^{-n},\bm a,\alpha}{\bm b, \beta}{q}{x}\in\P^{q}_n(\R).
    \end{equation*}
     Moreover, for  $q^2\leq t_1,t_2<1$, if $\alpha>0$,
    \begin{equation*}
        \pQq{i+1}{j}{q^{-n},\bm a}{\bm b}{q}{x}\in\P^q_n(\R_{>0})\quad \Longrightarrow \quad \pQq{i+1}{j+1}{q^{-n},\bm a, t_1\alpha}{\bm b, \beta}{q}{x}\prec \pQq{i+1}{j+1}{q^{-n},\bm a, t_2\alpha}{\bm b, t_2\beta }{q}{x}.
    \end{equation*}
    If $\alpha<0$,
        \begin{equation*}
        \pQq{i+1}{j}{q^{-n},\bm a}{\bm b}{q}{x}\in\P^q_n(\R_{>0})\quad \Longrightarrow \quad \pQq{i+1}{j+1}{q^{-n},\bm a, \alpha}{\bm b, \beta}{q}{x}\prec \pQq{i+1}{j+1}{q^{-n},\bm a, t_2\alpha}{\bm b, \beta }{q}{x}.
    \end{equation*}
\end{corollary}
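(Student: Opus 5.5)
Write $P(x)=\pQq{i+1}{j}{q^{-n},\bm a}{\bm b}{q}{x}$ and $R_{\alpha,\beta}(x)=\pQq{2}{1}{q^{-n},\alpha}{\beta}{q}{x}$. The plan is to obtain every claim by convolving $P$ with $R_{\alpha,\beta}$. Theorem~\ref{Theorem:qmultiplicative} gives, with $i_2=1$, $j_2=1$,
$$
P\boxtimes_{n,q}R_{\alpha,\beta}=\pQq{i+2}{j+1}{q^{-n},\bm a,\alpha}{\bm b,\beta}{q}{x},
$$
after checking that the exponent of the sign and of $q^{\binom{k}{2}}$ in the resulting coefficients is $(j+1)-(i+1)$. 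This matches the ${}_{i+2}\phi_{j+1}$ in the statement; the lower index $i+1$ written there is a typo.

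\textbf{Real-rootedness.} By row~1 of Table~\ref{tab:2Q1}, for $0<\beta<1$ and $\alpha<\beta q^{n-1}$ we have $R_{\alpha,\beta}\in\P^q_n(\R_{>0})$, since its roots lie in $(0,q)$ and its lmesh is $<q$. Proposition~\ref{prop:realrootedness}(i), with $p=P\in\P^q_n(\R)$ and $r=R_{\alpha,\beta}$, then gives the first implication directly.

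\textbf{Interlacing.} Here $P\in\P^q_n(\R_{>0})$, and we apply the preservation of interlacing in the form of Remark~\ref{preserveinterlacing.negativezeros}, with $P$ playing the role of $r\in\overline{\P^q_n}(\R_{>0})$. If $p\preccurlyeq\widetilde p$ with $p\in\overline{\P^q_n}(\R)$, then $p\boxtimes_{n,q}P\preccurlyeq\widetilde p\boxtimes_{n,q}P$, and commutativity of $\boxtimes_{n,q}$ (evident from \eqref{def:qmulticoeff}) lets us put $P$ on either side.
\begin{itemize}
\item For $\alpha>0$, take $p=R_{t_1\alpha,\beta}$ and $\widetilde p=R_{t_2\alpha,t_2\beta}$. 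These interlace by \eqref{qinterlacing3}, applied with $a=\alpha$, $b=\beta$, which satisfy $0\le\alpha<\beta q^{n-1}$.
\item For $\alpha<0$, use \eqref{qinterlacing4} with $p=R_{\alpha,\beta}$ and $\widetilde p=R_{t_2\alpha,\beta}$.
\end{itemize}
In each case we must check that $p$ lies in $\overline{\P^q_n}(\R)$. Since $t_1\alpha<\beta q^{n-1}$, row~1 of Table~\ref{tab:2Q1} gives this for $R_{t_1\alpha,\beta}$ and for $R_{\alpha,\beta}$.

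\textbf{Main obstacle.} The cited Proposition~\ref{lem:preservinginterlacingMult} and Remark~\ref{preserveinterlacing.negativezeros} are stated for $\preccurlyeq$ and for monic polynomials, while the claim is strict $\prec$. Normalization is harmless: $\boxtimes_{n,q}$ is bilinear, and all polynomials involved have degree exactly $n$ with nonzero leading coefficient, so the constants can be divided out. For strictness, I would first try to argue that strict interlacing of the inputs, together with simple roots of $P$ (lmesh $<q$ forces the roots to be simple), propagates to strict interlacing of the outputs.

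A concrete route is Lemma~\ref{Jordaaninterlacing}-type reasoning: write $\widetilde p$ as a combination of $p$ and a polynomial interlacing it, then convolve. Alternatively, one can use the Hermite–Kakeya–Obreschkoff characterization. Interlacing holds for every real combination $\lambda p+\mu\widetilde p$ being real-rooted, and lies in $\P^q_n$ along the pencil. Strictness then follows because a common root of the two outputs would force a double root in some member of the pencil, contradicting lmesh $<q$ from Proposition~\ref{prop:realrootedness}(i). Verifying that the pencil members stay in $\P^q_n(\R)$ is the delicate point. If it fails, the fallback is to settle for $\preccurlyeq$ and recover strictness by a perturbation and continuity argument in $t_1$.
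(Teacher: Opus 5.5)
Your route is the one the paper indicates; the paper gives no proof beyond the sentence preceding the corollary. You convolve with the row-1 ${}_2\phi_1$ via Theorem~\ref{Theorem:qmultiplicative}, get real-rootedness from Proposition~\ref{prop:realrootedness}(i), and push \eqref{qinterlacing3}, \eqref{qinterlacing4} through Remark~\ref{preserveinterlacing.negativezeros}. Your parameter checks and the correction to ${}_{i+2}\phi_{j+1}$ are right.

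The only soft spot is the one you flagged. Proposition~\ref{lem:preservinginterlacingMult} gives only $\preccurlyeq$, and the paper does not address strictness either. Neither of your remedies works as stated. A continuity argument in $t_1$ can only turn strict interlacings into weak ones in the limit, never the reverse. Moving $t_1$ would also require monotone root motion of ${}_2\phi_1(q^{-n},t\alpha;\beta)$ in $t$, which is not among the available facts. The pencil argument needs a lmesh bound on $\lambda p+\mu\widetilde p$ that you do not have: for $\lambda\mu<0$, one root of the pencil member passes through $0$, so it is not even one-signed.

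A perturbation does work if you dilate instead of moving $t_1$.
\begin{enumerate}
\item Let $p\prec\widetilde p$ be the relevant ${}_2\phi_1$ pair. The interlacing is strict by \eqref{qinterlacing3} or \eqref{qinterlacing4}, and both polynomials have roots in $(0,q)$ by row~1.
\item Set $F=p\boxtimes_{n,q}P$ and $G=\widetilde p\boxtimes_{n,q}P$. By Proposition~\ref{prop:realrootedness}(ii), $G$ has positive roots.
\item Because $p\prec\widetilde p$ strictly, there is $\epsilon>0$ with $p\preccurlyeq\widetilde p(cx)$ both for $c=1+\epsilon$ and for $c=(1+\epsilon)^{-1}$.
\item Since $e_k(f(cx))=c^{n-k}e_k(f)$, \eqref{def:qmulticoeff} gives $\widetilde p(cx)\boxtimes_{n,q}P=G(cx)$.
\item Remark~\ref{preserveinterlacing.negativezeros} then yields $F\preccurlyeq G(cx)$ for both values of $c$. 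By positivity of the roots of $G$, this means
\[
\lambda_j(F)\le \frac{\lambda_j(G)}{1+\epsilon}<\lambda_j(G) \quad\text{and}\quad \lambda_j(G)<(1+\epsilon)\,\lambda_j(G)\le \lambda_{j+1}(F),
\]
which is exactly $F\prec G$.
\end{enumerate}
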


Another example of results we can obtain by combining the information about the zeros of $_1\phi_1$ and $_2\phi_0$ polynomials from Table \ref{tab:2Q1} is the following:
\begin{corollary}
Let $i, j\geq 0$, and 
$$
0<b_1,\dots,b_j<1, \quad  a_1,\dots,a_{i}>q^{-n+1}.
$$ 
Then, with $\bm a = \left( a_1,\dots,a_{i} \right)$, $\bm b = \left( b_1,\dots,b_{j} \right)$,
$$
\pQq{i+1}{j}{q^{-n},\bm a}{\bm b}{q}{(-1)^j x}\in \P_n(\R_{>0}). 
$$
If, additionally, $q^{2}\leq t < 1$ and $j$ is even, then 
    $$\pQq{i+1}{j}{q^{-n},\ \bm a}{ b_1,\ \dots,\ b_j}{q}{x}\prec\pQq{i+1}{j}{q^{-n},\ \bm a}{ tb_1,\ b_2,\ \dots,\ b_j}{q}{x}, $$
For $j$ odd, the reversed interlacing holds.
\end{corollary}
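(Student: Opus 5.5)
The plan is to write the polynomial as an iterated $q$-multiplicative convolution of the elementary building blocks in Table~\ref{tab:2Q1}, and then read off zero location and interlacing from Proposition~\ref{prop:realrootedness}, the rule of signs, and Proposition~\ref{lem:preservinginterlacingMult}. Set
$$
A_m(x)=\pQq{2}{0}{q^{-n},a_m}{\cdot}{q}{x}, \qquad B_l(x)=\pQq{1}{1}{q^{-n}}{b_l}{q}{x}.
$$
By Theorem~\ref{Theorem:qmultiplicative}, applied repeatedly and using the associativity and commutativity that are evident from \eqref{def:qmulticoeff}, the concatenation of parameters gives
$$
\pQq{i+1}{j}{q^{-n},\bm a}{\bm b}{q}{x}=A_1\boxtimes_{n,q}\cdots\boxtimes_{n,q}A_i\boxtimes_{n,q}B_1\boxtimes_{n,q}\cdots\boxtimes_{n,q}B_j .
$$
The case $i=j=0$ is just $E_{n,q}$, whose zeros $q,\dots,q^n$ are positive. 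For this count of parameters I take the theorem with the $\,{}_{r}\phi_s$ indices adding as in \eqref{def:qpolynomial1}, so that each $A_m$ contributes an upper parameter and each $B_l$ a lower one.

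For the first claim, Table~\ref{tab:2Q1} gives the zero data of the factors. By row 7, $a_m>q^{1-n}$ gives $A_m\in\P^q_n(\R_{>0})$. By row 8, $0<b_l<1$ gives $B_l\in\P^{q^2}_n(\R_{<0})\subset\P^q_n(\R_{<0})$. I then induct on the number of factors. Proposition~\ref{prop:realrootedness}(ii), together with the two rules of signs stated after it, shows that each new convolution keeps the logarithmic mesh below $q$. It also multiplies the ``sign'' of the root half-line by $+1$ for an $A_m$ and by $-1$ for a $B_l$. Hence the full product lies in $\P^q_n((-1)^j\R_{>0})$. Since the defining sum is a polynomial, the substitution $x\mapsto(-1)^jx$ then gives roots in $\R_{>0}$.

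For the interlacing claim, take $j\ge1$. Write the polynomial as $B_1\boxtimes_{n,q} r$, where $r$ is the convolution of the remaining $i$ factors $A_m$ and $j-1$ factors $B_l$ (with $r=E_{n,q}$ if there are none). By the first part, $r\in\overline{\P^q_n}((-1)^{j-1}\R_{>0})$; the closure is needed because $E_{n,q}$ has mesh exactly $q$. Since $tb_1\in(0,1)$, interlacing \eqref{qinterlacing1} gives
$$
p:=\pQq{1}{1}{q^{-n}}{tb_1}{q}{x}\preccurlyeq \widetilde p:=\pQq{1}{1}{q^{-n}}{b_1}{q}{x},
$$
and $p\in\P^{q^2}_n(\R_{<0})\subset\overline{\P^q_n}(\R)$ by row 8.

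If $j$ is even, $r$ has negative zeros, and Proposition~\ref{lem:preservinginterlacingMult} gives $\widetilde p\boxtimes_{n,q}r\preccurlyeq p\boxtimes_{n,q}r$. This is exactly the asserted interlacing with parameters $(b_1,\dots)$ on the left and $(tb_1,\dots)$ on the right. If $j$ is odd, $r$ has positive zeros, and Remark~\ref{preserveinterlacing.negativezeros} gives the reversed relation. In applying these results I need the monic/degree-$n$ normalization. This holds because all leading coefficients are nonzero: the $b_l$ avoid $q^{-k}$ and the $a_m$ avoid $q^{k}$, so the degree is exactly $n$.

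The main obstacle is upgrading $\preccurlyeq$ to the strict $\prec$ claimed. Both polynomials have simple zeros, since their mesh is $<q<1$. So it suffices to exclude a common zero. I would try to show that a common zero of $\widetilde p\boxtimes r$ and $p\boxtimes r$ forces a common zero of the family $\widetilde p+c\,p$, giving a degenerate pencil. I would attempt this via linearity of $\boxtimes_{n,q}$ and Lemma~\ref{Jordaaninterlacing}, or, failing that, by a perturbation argument in $t$ using the strict version of \eqref{qinterlacing1} for $b_1>0$.
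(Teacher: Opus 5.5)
Your route is the one the paper intends. The paper writes no proof beyond pointing to rows 7 and 8 of Table~\ref{tab:2Q1}. The route is:
\begin{itemize}
\item factor via Theorem~\ref{Theorem:qmultiplicative} into ${}_2\phi_0$ and ${}_1\phi_1$ blocks (your reading of the lower index as $j_1+j_2$ is what the coefficient computation there actually gives);
\item propagate zero location and lmesh with Proposition~\ref{prop:realrootedness} and the rule of signs;
\item push \eqref{qinterlacing1} through Proposition~\ref{lem:preservinginterlacingMult} or Remark~\ref{preserveinterlacing.negativezeros}, according to the parity of $j$.
\end{itemize}
This correctly proves the first claim, and the interlacing claim with $\preccurlyeq$ in place of $\prec$. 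Your use of $\overline{\P^q_n}$ for $r=E_{n,q}$ and the degree check are also right. One small slip: the $a_m$ must avoid $q^{-k}$ for $0\le k\le n-1$ (not $q^{k}$), which $a_m>q^{1-n}$ guarantees.

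The upgrade to $\prec$ is a genuine gap, and neither repair you list closes it.
\begin{itemize}
\item The pencil idea has no mechanism. $\boxtimes_{n,q}$ acts on coefficients, not on values, so a common zero of $\widetilde p\boxtimes_{n,q}r$ and $p\boxtimes_{n,q}r$ says nothing about $\widetilde p+c\,p$. Besides, \eqref{qinterlacing1} is only non-strict, so you do not even know that $p$ and $\widetilde p$ have no common zero.
\item A perturbation in $t$ by itself only produces limits of interlacing, which are again non-strict. The paper states no strict form of \eqref{qinterlacing1} or of Proposition~\ref{lem:preservinginterlacingMult}.
\end{itemize}
Note that the paper's own argument rests on exactly these two non-strict inputs, so it too only yields $\preccurlyeq$.

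For $q^2<t<1$ you can in fact get strictness from the $\preccurlyeq$ statement by a rigidity argument.
\begin{itemize}
\item Apply the $\preccurlyeq$ statement with every ratio in $[q^2,1)$. This makes each zero $\lambda_k$ a monotone function of $b_1\in(0,1)$, and it is real-analytic because the zeros are simple.
\item Suppose $\lambda_k(b_1)=\lambda_k(tb_1)$, or $\lambda_k$ at one parameter coincides with $\lambda_{k+1}$ at the other (in that case compare with some $t'\in[q^2,t)$). Then some zero is constant on an interval, hence on all of $(0,1)$.
\item That is impossible: the value of the polynomial at a fixed $x_0\neq 0$ is a rational function of $b_1$ that tends to the constant term $1$ as $b_1\to\infty$, so it cannot vanish for all $b_1\in(0,1)$.
\end{itemize}
The endpoint $t=q^2$ still needs a strict input from the cited source.
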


\begin{example}
Multiple orthogonal polynomials (of type II) are polynomials satisfying orthogonality conditions with respect to $m \geq 1$ positive measures. In particular, given $\beta>-1$ and multi-indices $\bm{n}=(n_1,\cdots,n_m)\in \N^m$ and $\bm \alpha=(\alpha_1,\cdots,\alpha_r)$ such that $\alpha_i>-1$, $\alpha_i-\alpha_j\not\in\Z$, multiple little $q$-Jacobi polynomials of the first kind, denoted by $p_{\bm n}^{(\bm \alpha,\beta)}(x;q)$, are polynomials of degree $N=n_1+\cdots+n_m$ satisfying the orthogonality relations
$$
\int_0^1 p_{\bm n}^{(\bm \alpha,\beta)}(x;q) x^{k+\alpha_j} \frac{(q x ; q)_{\infty}}{\left(q^{\beta+1} x ; q\right)_{\infty}}\, d_q x=0, \quad k=0,1, \ldots, n_j-1, \quad j=1,2, \ldots, m,
$$
where 
$$
\int_0^1 f(x) d_q x=(1-q) \sum_{k=0}^{\infty} q^k f\left(q^k\right).
$$

From \cite[Equation 2.7]{postelmans2005multiple}, we get the following identity, 
   \begin{equation}\label{eq:multiplelittle}
       \pQq{1}{0}{q^{-\beta}}{-}{q}{q^{\beta+1}x}p_{\bm n}^{(\bm \alpha,\beta)}(x;q)\propto\pQq{r+1}{r}{q^{-\beta-N},q^{\bm \alpha +\bm n +1}}{q^{\bm \alpha +1}}{q}{q^{\beta +1}x}.
   \end{equation}
   By \cite{postelmans2005multiple}, it is known that $p_{\bm n}^{(\bm \alpha,\beta)}(x;q)\in \P_N((0,q))$. For $\beta\in\N\cup \{0\}$, the right-hand side of \eqref{eq:multiplelittle} is a $q$-hypergeometric polynomial, which can be decomposed as
   \begin{equation}\label{deco:little}
      \pQq{1}{0}{q^{-\beta}}{-}{q}{q^{\beta+1}x}p_{\bm n}^{(\bm \alpha,\beta)}(x;q)=(r_{\alpha_1}\boxtimes_{n,q}\cdots\boxtimes_{n,q}r_{\alpha_m}) (x),
   \end{equation}
   where

   \begin{equation*}
       r_{\alpha_j}(x):=\pQq{2}{1}{q^{-N},q^{\alpha_j+ n_j +1}}{q^{ \alpha_j +1}}{q}{x}\in\overline{\P^q_N}((0,q)), \qquad \text{for } j=1,\dots,m,
   \end{equation*}
   by row 2 of Table \ref{tab:2Q1}. Proposition \ref{prop:realrootedness} yields now the information on the lmesh:
 \begin{equation}
     p_{\bm n}^{(\bm \alpha,\beta)}(x;q)\in \overline{\P_N^q}((0,q)).
 \end{equation}
 Moreover, by \eqref{qinterlacing6} we have that for $0\leq t<2$,
   \begin{equation}\label{interlacing:r}
       r_{\alpha_j}\preccurlyeq r_{\alpha_j+t}.
   \end{equation}
   Gathering \eqref{deco:little}, \eqref{interlacing:r} and  Proposition \ref{lem:preservinginterlacingMult}, we conclude that
   \begin{equation}
       p_{\bm n}^{(\bm \alpha,\beta)}(x;q)\preccurlyeq p_{\bm n}^{(\bm \alpha+t\bm \theta,\beta)}(x;q),
   \end{equation}
   where $\bm \theta_j$ is the $j$-th row of the $m\times m$ identity matrix. 

\end{example}
 
\section{\texorpdfstring{$q$-Additive Convolution}{q-Additive Convolution}} \label{sec:4}

In this section, we introduce a $q$-analog of the finite free additive convolution.

\subsection{Definition and properties}

\begin{definition}\label{def:qadditive}
Given two polynomials, $p$ and $r$, of degree at most $n$, the 
\textbf{$n$-th $q$-additive finite free convolution} of $p$ and $r$, denoted as $p\boxplus_{n,q} r$, is a polynomial of degree at most $n$, which, following the notational convention \eqref{monicP}, is defined as
$$
[p\boxplus_{n,q} r](x): =\sum_{k=0}^n  e_k(p\boxplus_{n,q} r)\, x^{n-k},
$$
with 
\begin{equation}\label{def:qaddicoeff}
    e_k( p\boxplus_{n,q}r)=(-1)^k\sum_{i+j=k}\frac{\qraising{q}{q}{n-i}\qraising{q}{q}{n-j}}{\qraising{q}{q}{n}\qraising{q}{q}{n-k}}\, e_i(p)e_j(r).
\end{equation}
\end{definition}

The $q$-additive finite free convolution exhibits several convenient properties. It is a commutative and associative linear operator  from $\P_n \times \P_n$ to $\P_n$: if $p,r,g\in \P_n$,  and $\alpha\in\rr$, then 
\begin{equation*}
    (\alpha p+g)\boxplus_{n,q} r= \alpha (p\boxplus_{n,q} r)+g\boxplus_{n,q} r.
\end{equation*}
Moreover, it is compatible with the dilation: for $\alpha\in\rr$,
\begin{equation*}
    p(\alpha x)\boxplus_{n,q} r(\alpha x)=[p\boxplus_{n,q} r](\alpha x).
\end{equation*}
Furthermore, $x^n$ is the identity element for $\boxplus_{n,q}$:
\begin{align}
    \label{identity}
        p \boxplus_{n,q} x^n &= p
        \qquad \text{ for all }p\in\pols.
\end{align}
\begin{remark}[Relation with additive finite free convolution]
Recall that for polynomials $p,r\in\P_n$ in the form \eqref{monicP}, its $n$-th \emph{additive finite free convolution} is 
\begin{equation}\label{coeffAdditiveConv}
    [p\boxplus_n r](x) := \sum_{k=0}^n x^{n-k}(-1)^k \sum_{i+j=k} \frac{(n-i) !(n-j) !}{n !(n-k) !} \, e_i(p) e_j(r).
\end{equation}
Notice that this is the limit of the $q$-additive convolution when $q$ tends to 1:
$$
\lim_{q\to 1}p \boxplus_{n,q} r= p\boxplus_n r.
$$

Indeed, the limit can be checked coefficient-wise using property \eqref{qraising3},
    $$
    \lim_{q\to 1}\frac{\qraising{q}{q}{n-i}\qraising{q}{q}{n-j}}{\qraising{q}{q}{n}\qraising{q}{q}{n-k}}=\frac{(n-i)!(n-j)!}{n!(n-k)!}.
    $$
\end{remark}

It turns out that this convolution also can be computed in terms of $q$-differential operators defined in \eqref{def:qderivative}, which can be taken as an alternative definition.
With a polynomial $p\in \P_n$ we can associate its unique \textbf{$(\D_q)$-dual} polynomial $\widehat{p}\in \P_n$ such that
$$
\widehat{p}(\D_q) [x^n]=p(x).
$$
This correspondence can be made explicit: for 
$$
p(x)=\sum_{j=0}^n  e_j(p)x^{n-j} ,
$$
we have 
\begin{equation}\label{def:qdifferentialoperator}
    \widehat{p}(x)=\sum_{k=0}^n(-1)^k\frac{q^{\binom{k}{2}-nk}}{\qraising{q^{-n}}{q}{k}}e_k(p)x^{k}.
\end{equation}

\begin{proposition}\label{prop:qadditive}
Given polynomials $p, r\in \P_n$, let $\widehat{p}$ and $\widehat{r}$  be their $(\D_q)$-duals.  
Then,
    \begin{align}
       [p\boxplus_{n,q} r](x)&=\widehat{p}(\D_q)\widehat{r}(\D_q)[x^n]\label{prop:qadditive1}.
    \end{align}
\end{proposition}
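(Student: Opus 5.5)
The proof is a direct coefficient computation, which rests on the action of powers of $\D_q$ on monomials. From the definition \eqref{def:qderivative}, $\D_q x^m = (1-q^m)x^{m-1}$, and iterating gives
\begin{equation*}
\D_q^k x^n = (1-q^n)(1-q^{n-1})\cdots(1-q^{n-k+1})\,x^{n-k} = \frac{\qraising{q}{q}{n}}{\qraising{q}{q}{n-k}}\,x^{n-k}.
\end{equation*}
I will also use the elementary identity $\qraising{q^{-n}}{q}{k} = (-1)^k q^{\binom{k}{2}-nk}\,\qraising{q}{q}{n}/\qraising{q}{q}{n-k}$. With it, formula \eqref{def:qdifferentialoperator} simplifies to
\begin{equation*}
\widehat{p}(x)=\sum_{k=0}^n \frac{\qraising{q}{q}{n-k}}{\qraising{q}{q}{n}}\,e_k(p)\,x^k .
\end{equation*}
This also verifies that $\widehat p(\D_q)[x^n]=p$, since $\D_q^k x^n$ exactly cancels the prefactor. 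I will record this check first, because it confirms the normalization of the dual.

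Next I expand $\widehat{p}(\D_q)\widehat{r}(\D_q)$. It is a polynomial in the single operator $\D_q$, so it equals $(\widehat p\,\widehat r)(\D_q)$ and the two factors commute. The coefficient of $\D_q^k$ in this product is
\begin{equation*}
\sum_{i+j=k}\frac{\qraising{q}{q}{n-i}\qraising{q}{q}{n-j}}{\qraising{q}{q}{n}^2}\,e_i(p)e_j(r).
\end{equation*}
Applying this operator to $x^n$ multiplies the $k$-th term by $\qraising{q}{q}{n}/\qraising{q}{q}{n-k}$ and produces $x^{n-k}$. The resulting coefficient of $x^{n-k}$ is
\begin{equation*}
\sum_{i+j=k}\frac{\qraising{q}{q}{n-i}\qraising{q}{q}{n-j}}{\qraising{q}{q}{n}\qraising{q}{q}{n-k}}\,e_i(p)e_j(r),
\end{equation*}
which has the same $q$-factorial weight as \eqref{def:qaddicoeff}. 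Terms with $i$ or $j$ greater than $n$ do not occur, because $\widehat p$ and $\widehat r$ both have degree at most $n$.

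The only real obstacle is bookkeeping of signs and $q$-powers. The computation above has no factor $(-1)^k$, while \eqref{def:qaddicoeff} carries one. To resolve this, I would carefully recheck the conversion of $\qraising{q^{-n}}{q}{k}$ in \eqref{def:qdifferentialoperator}, and also the paper's convention for $e_k$ in Definition~\ref{def:qadditive}. If the sign survives this check, it has to be absorbed either by reading the coefficients of $p$ in the form \eqref{monicP} with alternating signs, or by a sign in the definition of $\widehat p$.

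Once the conventions are aligned, comparing coefficients of $x^{n-k}$ for each $k=0,\dots,n$ proves \eqref{prop:qadditive1}. As a consistency check, taking $r=x^n$ gives $\widehat r=1$, and the formula then returns $p$, in agreement with the identity \eqref{identity}.
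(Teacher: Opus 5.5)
Your computation is correct and is essentially the paper's own proof: the paper applies $\widehat r(\D_q)$ to $x^n$ first to recover $r$, then lets $\widehat p(\D_q)$ act monomial by monomial via $\D_q^{i}x^{n-j}=\frac{\qraising{q}{q}{n-j}}{\qraising{q}{q}{n-j-i}}\,x^{n-j-i}$. It arrives at exactly your coefficient, with no factor $(-1)^k$, and simply identifies it with $e_k(p\boxplus_{n,q}r)$.

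The sign you flagged is a typo in \eqref{def:qaddicoeff}, not a flaw in your argument, so you should resolve it rather than leave it open. With the printed sign one would get $p\boxplus_{n,q}x^n=(-1)^n p(-x)$, contradicting \eqref{identity} — this is exactly your own $r=x^n$ check. The paper's numerical example $x^2-1.55x+0.61$ is also computed with the unsigned formula. The right fix is therefore to delete the $(-1)^k$. Moving a sign into $\widehat p$ is not an option, since $\widehat p$ is pinned down by $\widehat p(\D_q)[x^n]=p$.
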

\begin{proof}
The conclusion follows from a straightforward computation and formula \eqref{def:qdifferentialoperator}:
\begin{align*}
    \widehat{p}(\D_q)\widehat{r}(\D_q)[x^n] &=\widehat{p}(\D_q)r(x)    =\sum_{j=0}^n\sum_{i=0}^n\frac{\qraising{q}{q}{n-i}}{\qraising{q}{q}{n}}e_j(r)e_i(p)\D_q^{i}x^{n-j}\\
    &=\sum_{j=0}^n\sum_{i=0}^n\frac{\qraising{q}{q}{n-i}\qraising{q}{q}{n-j}}{\qraising{q}{q}{n}\qraising{q}{q}{n-j-i}}e_j(r)e_i(p)x^{n-j-i}
    =p\boxplus_{n,q}r(x).
\end{align*} 
\end{proof}

\subsection{Preservation of real roots}\

One of the convenient features of $\boxplus_n$ is that it preserves real roots. Unfortunately, $\boxplus_{n,q}$ does not share this property, in general.
\begin{example}
Let
$$
p(x)=x^2 - 0.80 x + 0.12\in \P^{0.5}_2(\R_{>0}),\text{ and } r(x)=x^2-0.75x+0.09\in \P^{0.5}_2(\R_{>0}).
$$
Nevertheless,
$$
(p\boxplus_{2,0.5}r)(x)=x^2-1.55x+0.61,
$$
which has complex roots. 
\end{example}

However, a simple modification of $\boxplus_{n,q}$ allows us to restore the real roots preservation for polynomials with lmesh  $<q$. Specifically, it turns out that  
$$ p,r\in\P^q_n(\R) \quad \Rightarrow \quad \pQq{1}{1}{q^{-n}}{0}{q}{-x}\boxtimes_{n,q}\left(p\boxplus_{n,q}r\right)\in\P_n(\R).
$$
We prove this fact in two steps. First, we establish a useful identity combining all four finite free convolutions ($\boxtimes_{n}$, $\boxtimes_{n,q}$, $\boxplus_{n}$, and $\boxplus_{n,q}$), which allows us to reduce the claimed result to the known real-roots preservation properties of 
$\boxplus_n$, $\boxtimes_n$, and $\boxtimes_{n,q}$. This is a purely algebraic result of independent interest that does not make any assumption on the location of the roots:
\begin{proposition}\label{prop:qadditive.mod}
For any polynomials $p,  r\in \P_n$,
    \begin{align}
         \pQq{1}{1}{q^{-n}}{0}{q}{-x}\boxtimes_{n,q}\left(p\boxplus_{n,q}r\right)&=[(U_n\boxtimes_{n,q}p)\boxplus_n(U_n\boxtimes_{n,q}r)]\boxtimes_{n}V_n, \label{prop:qadditive2}    
    \end{align}
    where 
    \begin{equation}\label{def:axuliar1}
    U_n(x):=\sum_{k=0}^n\frac{\qraising{q^{-n}}{q}{k}}{k!}x^k,\quad \text{ and }\quad V_n(x):=\sum_{k=0}^n\frac{\raising{-n}{k}}{\qraising{q}{q}{k}}q^{\binom{k}{2}}x^k.
\end{equation}
\end{proposition}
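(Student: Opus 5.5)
The plan is to verify \eqref{prop:qadditive2} coefficient by coefficient in the convention \eqref{monicP}. Every operation involved ($\boxtimes_{n,q}$, $\boxtimes_n$, $\boxplus_{n,q}$, $\boxplus_n$) is given by an explicit formula for $e_k$: see \eqref{def:qmulticoeff}, \eqref{coeffMultiplicativeConv}, \eqref{def:qaddicoeff} and \eqref{coeffAdditiveConv}. So it suffices to compute $e_k$ of both sides as bilinear expressions in $e_i(p)e_j(r)$, $i+j=k$, and compare them. No assumption on the zeros is needed.

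\emph{Left-hand side.} First read off the coefficients of $\pQq{1}{1}{q^{-n}}{0}{q}{-x}$ from \eqref{def:qpolynomial1} with $r=0$, $s=1$. The coefficient of $x^m$ is $q^{\binom m2}\qraising{q^{-n}}{q}{m}/\qraising{q}{q}{m}$: the sign $(-1)^m$ from the series cancels the one coming from $-x$. Hence its $e_k$ is this quantity with $m=n-k$. Multiplying by the factor $\qraising{q}{q}{n-k}/\qraising{q^{-n}}{q}{n-k}$ from \eqref{def:qmulticoeff}, everything cancels except $q^{\binom{n-k}2}$. So
\[
e_k(\mathrm{LHS})=(-1)^k q^{\binom{n-k}{2}}\sum_{i+j=k}\frac{\qraising{q}{q}{n-i}\qraising{q}{q}{n-j}}{\qraising{q}{q}{n}\qraising{q}{q}{n-k}}\,e_i(p)e_j(r).
\]

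\emph{Right-hand side.}
\begin{enumerate}[(i)]
\item By \eqref{def:axuliar1}, $e_i(U_n)=\qraising{q^{-n}}{q}{n-i}/(n-i)!$. Hence $e_i(U_n\boxtimes_{n,q}p)=\frac{\qraising{q}{q}{n-i}}{(n-i)!}e_i(p)$, and similarly for $r$.
\item Inserting these into \eqref{coeffAdditiveConv}, the factorials $(n-i)!(n-j)!$ cancel. This gives
\[
(-1)^k\sum_{i+j=k}\frac{\qraising{q}{q}{n-i}\qraising{q}{q}{n-j}}{n!\,(n-k)!}\,e_i(p)e_j(r).
\]
\item We have $e_k(V_n)=\raising{-n}{n-k}q^{\binom{n-k}2}/\qraising{q}{q}{n-k}$, and $\raising{-n}{n-k}=(-1)^{n-k}n!/k!$. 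Applying \eqref{coeffMultiplicativeConv}, the factor $(-1)^k\binom nk^{-1}e_k(V_n)$ simplifies to $(-1)^n(n-k)!\,q^{\binom{n-k}2}/(n!\,\qraising{q}{q}{n-k})$.
\end{enumerate}
Multiplying the results of (ii) and (iii) produces exactly the structure of the left-hand side: the factor $q^{\binom{n-k}2}$, the denominator $\qraising{q}{q}{n-k}$, and the sum $\sum\qraising{q}{q}{n-i}\qraising{q}{q}{n-j}e_ie_j$ with sign $(-1)^k$.

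The main obstacle is the bookkeeping of the $k$-independent normalizing constants: signs $(-1)^n$ and the ratio of $n!$ to $\qraising{q}{q}{n}$. In my first pass these do not cancel automatically; a global factor of the form $(-1)^n\qraising{q}{q}{n}/n!$ appears to be left over. I will recheck this against the precise normalizations of $U_n$, $V_n$ and of $\boxtimes_n$ (the binomial in \eqref{coeffMultiplicativeConv}). If a genuine $k$-independent constant survives, it is harmless for all intended applications: it does not affect zeros, and it can be absorbed by rescaling $V_n$. In that case I would state the identity as an equality up to $\propto$, or adjust the constant in $V_n$ accordingly. Once the constants are matched, the equality of all coefficients $e_k$, $k=0,\dots,n$, proves \eqref{prop:qadditive2}.
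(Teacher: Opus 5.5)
Your approach (comparing $e_k$ of both sides through the explicit coefficient formulas) is the paper's approach. Your computations of the left-hand side and of steps (i)–(ii) are correct. You carry the factor $(-1)^k$ of \eqref{def:qaddicoeff} and \eqref{coeffAdditiveConv} on both sides, so it does not affect the comparison, even though it is really a typo: with it, $x^n$ would not be the identity for $\boxplus_{n,q}$.

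There is a small slip in (iii). Since $\binom nk^{-1}\cdot n!/k!=(n-k)!$, the factor is $(-1)^n(n-k)!\,q^{\binom{n-k}2}/\qraising{q}{q}{n-k}$, with no $n!$ in the denominator. Your formula as written would leave $\qraising{q}{q}{n}/(n!)^2$. The constant you actually report, $(-1)^n\qraising{q}{q}{n}/n!$, is the right one.

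That constant is genuine, so resolve your hedge. For every $k$ one gets $e_k(\mathrm{RHS})=(-1)^n\frac{\qraising{q}{q}{n}}{n!}\,e_k(\mathrm{LHS})$, that is,
\[
[(U_n\boxtimes_{n,q}p)\boxplus_n(U_n\boxtimes_{n,q}r)]\boxtimes_n V_n=(-1)^n\frac{\qraising{q}{q}{n}}{n!}\,\pQq{1}{1}{q^{-n}}{0}{q}{-x}\boxtimes_{n,q}\left(p\boxplus_{n,q}r\right).
\]
A quick check: for $p=r=x^n$, the left-hand side of \eqref{prop:qadditive2} is $q^{\binom n2}x^n$. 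The right-hand side is $(-1)^n\frac{\qraising{q}{q}{n}}{n!}\,q^{\binom n2}x^n$. For $n=1$ the ratio is $-(1-q)\neq 1$.

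So the proposition holds only up to a nonzero constant: with $\propto$ in place of $=$, or with $V_n$ rescaled by $(-1)^n n!/\qraising{q}{q}{n}$. This is all that Theorems \ref{thm:qadditivereal} and \ref{thm:qadditiveinterlacing} use, since zeros and interlacing are unaffected.

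The paper's own proof has the same defect. Its last display ends with $\frac{\qraising{q}{q}{n}}{n!}\,e_k(p\boxplus_{n,q}r)$, yet it asserts equality. It also silently drops the sign $(-1)^n$ and the factors $q^{\binom{n-k}2}$ (these cancel between the two sides).
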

\begin{remark}
It is easy to check that \eqref{prop:qadditive2} is equivalent to the identity
$$
p\boxplus_{n,q}r=[(U_n\boxtimes_{n,q}p)\boxplus_n(U_n\boxtimes_{n,q}r)]\boxtimes_{n}\widehat{V}_n,
$$
where 
$$\widehat{V}_n(x):=\sum_{k=0}^n\frac{\raising{-n}{k}}{\qraising{q}{q}{k}}x^k.$$
\end{remark}
\begin{proof}
We show that the corresponding coefficients of each side of \eqref{prop:qadditive2} match. As agreed, we use the notation for the coefficients as in \eqref{monicP}. 

By the definition of $q$-multiplicative convolution \eqref{def:qmulticoeff},  
\begin{equation*}
    e_j(U_n\boxtimes_{n,q}p)=\frac{\qraising{q}{q}{n-j}}{(n-j)!}e_j(p)\quad \text{and } \quad e_j(U_n\boxtimes_{n,q}r)=\frac{\qraising{q}{q}{n-j}}{(n-j)!}e_j(r).
\end{equation*}
Therefore, by the definition \eqref{coeffAdditiveConv}, for $k\in \{0, 1, \dots,n \}$,
\begin{equation*}
e_k((U_n\boxtimes_{n,q}p)\boxplus_n(U_n\boxtimes_{n,q}r))=\sum_{i+j=k}\frac{\qraising{q}{q}{n-i}\qraising{q}{q}{n-j}}{n!(n-k)!}\, e_j(p)e_i(r).
\end{equation*}
The conclusion follows from the formula for the multiplicative convolution \eqref{coeffMultiplicativeConv}
\begin{align*}
e_k([(U_n\boxtimes_{n,q}p)\boxplus_n(U_n\boxtimes_{n,q}r)]\boxtimes_{n}V_n)&=\sum_{i+j=k}\frac{\qraising{q}{q}{n-i}\qraising{q}{q}{n-j}}{n!\qraising{q}{q}{n-k}}\, e_j(p)e_i(r)\\
    &=\frac{\qraising{q}{q}{n}}{n!}e_k(p\boxplus_{n,q}r).
\end{align*}
\end{proof}

For our second step, we need the preservation properties of real roots for the remaining convolutions appearing in the identity \eqref{prop:qadditive2}. We collected and summarized next the classical results of Szeg\H{o} \cite{szego1922} and Walsh \cite{Walsh1922}, see also \cite{MR4789095}: for $p, g, r \in \P_n$,
\begin{align}
p\prec g,\ r\in \P_n(\rr)&\ \Rightarrow \ p\boxplus_n r\prec g\boxplus_n r \label{eq.add.inter} \\
 p,r\in \P_n(\rr) &\ \Rightarrow \ p\boxplus_n r\in \P_n(\rr), \label{eq.add.real} \\
p,r\in \P_n(\rr_{> 0}) &\ \Rightarrow \ p\boxplus_n r\in \P_n(\rr_{> 0}),\\
p\in \P_n(\rr),\ r\in \P_n(\rr_{> 0}) &\ \Rightarrow \ p\boxtimes_n r\in \P_n(\rr), \label{eq.prod.real} \\ 
p,r\in \P_n(\rr_{> 0}) &\ \Rightarrow \ p\boxtimes_n r\in \P_n(\rr_{> 0}), \\
p\prec g,\ r\in \P_n(\rr_{> 0})&\ \Rightarrow \ p\boxtimes_n r\prec g\boxtimes_n r. \label{eq.prod.inter}
\end{align}
Additionally, we will need the result from \cite{MR2755692} that multiplicative convolution preserves the smallest lmesh: given $q<1$ and polynomials $p$, $r$ such that $p\in\overline{\P_n^{q}}(\R_{> 0})$ and $r\in\overline{\P_n}(\R_{> 0})$, it holds that 
$$
p\boxtimes_{n}r\in\overline{\P_n^{q}}(\R_{> 0}).
$$  

The last ingredient is the following lemma, whose proof is deferred to Appendix~\ref{appendixA}.
\begin{lemma}\label{lem:p.r.real}
For every $n\in\N$, both polynomials \eqref{def:axuliar1} 
are in $ \P^q_n(\R_{>0})$.
\end{lemma}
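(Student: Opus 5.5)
The plan is to realize each of $U_n$ and $V_n$ as a classical multiplicative convolution $\boxtimes_n$ of a polynomial whose logarithmic mesh is already controlled by Table~\ref{tab:2Q1} with an auxiliary polynomial having only positive zeros. The result of \cite{MR2755692} (that $\boxtimes_n$ preserves the smallest lmesh) then transfers the mesh bound. Concretely, in the basis $x^k$, if $p=\sum a_k x^k$ and $r=\sum c_k x^k$, then by \eqref{coeffMultiplicativeConv} the coefficient of $x^k$ in $p\boxtimes_n r$ is $(-1)^{n-k}\binom{n}{k}^{-1}a_kc_k$. So to write a given polynomial as $p\boxtimes_n r$ with $p$ prescribed, we simply divide coefficients.

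\emph{The polynomial $V_n$.} Using $\raising{-n}{k}=(-1)^k n!/(n-k)!$, I would compare $V_n$ with
$$
\pQq{1}{1}{q^{-n}}{0}{q}{x}=\sum_{k=0}^n (-1)^k q^{\binom k2}\frac{\qraising{q^{-n}}{q}{k}}{\qraising{q}{q}{k}}x^k ,
$$
which by row~9 of Table~\ref{tab:2Q1} lies in $\overline{\P_n^{q^2}}(\R_{<0})$. The $q$-factors $q^{\binom k2}/\qraising{q}{q}{k}$ cancel in the quotient of coefficients. Hence $V_n=\pQq{1}{1}{q^{-n}}{0}{q}{x}\boxtimes_n W_n$ with $W_n$ an explicit polynomial whose $x^k$-coefficient is, up to the sign $(-1)^{n-k}$, equal to
$$
\binom nk\frac{n!/(n-k)!}{\qraising{q^{-n}}{q}{k}}
=\binom nk\prod_{j=0}^{k-1}\frac{n-j}{1-q^{j-n}} .
$$
For $U_n$ I would do the same with $E_{n,q}=\pQq{1}{0}{q^{-n}}{\cdot}{q}{x}$, whose zeros $q,\dots,q^n$ give lmesh exactly $q$. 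This yields an auxiliary polynomial with coefficients $\pm\binom nk \qraising{q}{q}{k}/k!=\pm\binom nk\prod_{j\le k}(1-q^j)/j$.

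\emph{Zeros of the auxiliary polynomials.} In both cases the auxiliary polynomial is $\sum_k\binom nk\gamma_k(-x)^k$ up to sign, with $\gamma_k$ a product of $k$ positive factors. This is exactly the setting of Jensen polynomials and Pólya--Schur multiplier sequences. I would show that $\gamma_k$ is a multiplier sequence, i.e.\ that $\sum\gamma_k x^k/k!$ is in the Laguerre--Pólya class. For the $U_n$ case, $(1-q^j)/j=\int_q^1 t^{j-1}\,dt$, which suggests writing $\gamma_k$ as a moment-type product, or factoring the operation into $n$ successive applications of $\boxtimes_n$ with elementary real-rooted polynomials. The sign pattern then gives all zeros in $\R_{>0}$. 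This step, establishing real-rootedness of the auxiliary polynomials, is the one I expect to be the main obstacle.

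\emph{Fallback and conclusion.} If the factorization approach fails, I would instead apply Proposition~\ref{prop:lmeshinterlaing} (or Remark~\ref{remark:lmeshinterlaing}) directly. This requires computing $\D_q U_n$ (resp.\ $\D_q V_n$) via $\D_q x^k=(1-q^k)x^{k-1}$, proving positivity and simplicity of the zeros, and proving the interlacing $p\prec\D_q p$. I would obtain the interlacing from a three-term/contiguity relation together with Lemma~\ref{Jordaaninterlacing}, arguing by induction on $n$.

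Once the factorization is in place, the conclusion follows from \cite{MR2755692} and the sign rules. For $V_n$, the mesh bound is $\le q^2<q$, so the strict inequality is automatic. For $U_n$, the result of \cite{MR2755692} gives only lmesh $\le q$. I would upgrade this to strict inequality by noting that equality would force a geometric pair of zeros. Simplicity of the zeros, together with strict interlacing preservation \eqref{eq.prod.inter} applied to $E_{n,q}(x)\preccurlyeq E_{n,q}(qx)$ perturbed by the nondegenerate factor, should exclude this.
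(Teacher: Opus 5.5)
\textbf{The main route has a genuine gap, and for $V_n$ it fails outright.} Dividing coefficients as you describe, for $n=2$ the auxiliary polynomial is
\[
W_2(x)=1+\frac{4q^2}{1-q^2}\,x+\frac{2q^3}{(1-q)(1-q^2)}\,x^2 .
\]
Its discriminant is $8q^3(q-1)/(1-q^2)^2<0$, so $W_2$ has non-real zeros for every $q\in(0,1)$. Note also that all coefficients of $W_n$ have the same sign, so $W_n$ is not of the form $\sum_k\binom nk\gamma_k(-x)^k$ with $\gamma_k>0$. The bound you expected is in fact false: $\lmesh V_2=(1-\sqrt{1-q^2})/q$, which for $q=1/2$ is about $0.268>q^2$. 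So no factorization of your type, through a factor with $\lmesh\le q^2$, can work.

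For $U_n$, the identity $U_n=E_{n,q}\boxtimes_n A_n$ with $A_n(x)=(-1)^n\sum_k\binom nk\frac{(q;q)_k}{k!}(-x)^k$ is correct. However, real-rootedness of $A_n$ for all $n$ is exactly what needs proof, and you give none. It is not a routine multiplier-sequence fact. Indeed, $\{(q;q)_k\}$ is not itself a multiplier sequence: $1+2(1-q)x+(1-q)(1-q^2)x^2$ has discriminant $-4q(1-q)^2<0$. So you cannot simply split off the Laguerre factor $1/k!$. Even granting real-rootedness, your upgrade from $\le q$ to $<q$ has a further problem. It applies \eqref{eq.prod.inter}, which requires strict interlacing, to $E_{n,q}(x)\preccurlyeq E_{n,q}(qx)$, and these two polynomials share the $n-1$ zeros $q,\dots,q^{n-1}$.

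\textbf{Your fallback is the right idea, and it is the paper's route, but as written it only names the tools.} The induction for $U_n$ runs on two identities:
\[
U_n'=(1-q^{-n})U_{n-1}, \qquad (1-q^n)U_n=x\D_qU_n+(1-q^n)U_{n-1}.
\]
The steps are as follows.
\begin{enumerate}
\item By the first identity, the zeros $\alpha_1<\dots<\alpha_{n-1}$ of $U_{n-1}$ are the critical points of $U_n$.
\item The inductive hypothesis $\lmesh U_{n-1}<q$ places $q\alpha_j$ in $(\alpha_{j-1},\alpha_j)$.
\item Monotonicity of $U_n$ between consecutive critical points then forces $\D_qU_n(\alpha_j)=(U_n(\alpha_j)-U_n(q\alpha_j))/\alpha_j$ to alternate in sign.
\item A sign count at $-\infty$ then gives $U_{n-1}\prec\D_qU_n$.
\item Lemma~\ref{Jordaaninterlacing}, applied to the second identity, yields $U_n\prec\D_qU_n$ with positive zeros, and Proposition~\ref{prop:lmeshinterlaing} concludes.
\end{enumerate}
For $V_n$ the analogous identities are $\D_qV_n(q^{-1}x)=-nV_{n-1}(x)$ and $nV_n=nV_{n-1}+xV_n'$. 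They give $V_n(\beta_j)=V_n(q^{-1}\beta_j)$ at the zeros $\beta_j$ of $V_{n-1}$. Rolle's theorem then puts a zero of $V_n'$ in each of the disjoint intervals $(\beta_j,q^{-1}\beta_j)$. Lemma~\ref{Jordaaninterlacing} together with Remark~\ref{remark:lmeshinterlaing} finishes the argument, starting from an explicit check at $n=2$.

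The zero-localization step (steps 2--4 for $U_n$, and Rolle on $(\beta_j,q^{-1}\beta_j)$ for $V_n$) is exactly where the hypothesis $\lmesh<q$ enters. Without it, the fallback is an outline rather than a proof.
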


We are now ready to state and prove the main result of this section, partially announced before:
\begin{theorem}[Preservation of real roots]\label{thm:qadditivereal}
    Let $p,r\in\P_n$. Then 
    \begin{enumerate}
        \item[(i)]$p,r\in\P^q_n(\R) \ \Rightarrow \ \pQq{1}{1}{q^{-n}}{0}{q}{-x}\boxtimes_{n,q}\left(p\boxplus_{n,q}r\right)\in\P_n(\R)$.
    \item[(ii)]$p,r\in\P^q_n(\R_{>0}) \ \Rightarrow \ \pQq{1}{1}{q^{-n}}{0}{q}{-x}\boxtimes_{n,q}\left(p\boxplus_{n,q}r\right)\in\P^q_n(\R_{>0})$.
    \end{enumerate}
\end{theorem}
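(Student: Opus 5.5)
The plan is to use the algebraic identity of Proposition~\ref{prop:qadditive.mod},
\[
\pQq{1}{1}{q^{-n}}{0}{q}{-x}\boxtimes_{n,q}\left(p\boxplus_{n,q}r\right)=[(U_n\boxtimes_{n,q}p)\boxplus_n(U_n\boxtimes_{n,q}r)]\boxtimes_{n}V_n ,
\]
and to propagate the zero information through the right-hand side one operation at a time. Each step uses a preservation result that has already been recorded.

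For (i), assume $p,r\in\P^q_n(\R)$. By Lemma~\ref{lem:p.r.real}, $U_n\in\P^q_n(\R_{>0})$. Proposition~\ref{prop:realrootedness}(i) then gives $U_n\boxtimes_{n,q}p\in\P^q_n(\R)$ and likewise $U_n\boxtimes_{n,q}r\in\P^q_n(\R)$; in particular both are real-rooted. By \eqref{eq.add.real}, their classical additive convolution $s:=(U_n\boxtimes_{n,q}p)\boxplus_n(U_n\boxtimes_{n,q}r)$ lies in $\P_n(\R)$. Lemma~\ref{lem:p.r.real} also gives $V_n\in\P^q_n(\R_{>0})\subset\P_n(\R_{>0})$, so \eqref{eq.prod.real} yields $s\boxtimes_n V_n\in\P_n(\R)$, which proves (i).

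For (ii), assume $p,r\in\P^q_n(\R_{>0})$. By Proposition~\ref{prop:realrootedness}(ii), both $U_n\boxtimes_{n,q}p$ and $U_n\boxtimes_{n,q}r$ lie in $\P^q_n(\R_{>0})$. The additive step only preserves positivity, so $s\in\P_n(\R_{>0})$, and the mesh information may be lost there. We recover it in the last step. Since $V_n\in\P^q_n(\R_{>0})$ has lmesh strictly less than $q$, the cited result of \cite{MR2755692} on $\boxtimes_n$ preserving the smaller lmesh, applied with the roles arranged so that $V_n$ is the factor with controlled mesh and $s$ the arbitrary positive-rooted one, gives $s\boxtimes_n V_n\in\overline{\P^{q'}_n}(\R_{>0})$ with $q'=\lmesh V_n<q$. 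Hence $s\boxtimes_n V_n\in\P^q_n(\R_{>0})$.

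The main obstacle is the bookkeeping around degrees and strictness. The cited results are stated for polynomials of exact degree $n$ with positive roots, so one has to check that the leading coefficients of $U_n\boxtimes_{n,q}p$, of $s$ and of $s\boxtimes_n V_n$ do not vanish. These are $e_0$-coefficients, equal to nonzero multiples of $e_0(p)e_0(r)$. If $p$ or $r$ has degree less than $n$, I would argue by continuity, perturbing to degree $n$ and using closedness of real-rootedness. I would also check that positivity survives, which requires $e_n(V_n)\neq0$. Strict inequality for the lmesh comes from $\lmesh V_n<q$, which Lemma~\ref{lem:p.r.real} provides.
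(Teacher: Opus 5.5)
Your proposal is correct and is essentially the paper's proof. Both arguments push the zeros through the factorization of Proposition~\ref{prop:qadditive.mod} using Lemma~\ref{lem:p.r.real}, Proposition~\ref{prop:realrootedness}, \eqref{eq.add.real} and \eqref{eq.prod.real}; for (ii), your step of recovering the mesh bound from $\lmesh V_n<q$ via the cited result of \cite{MR2755692} is exactly the step the paper leaves as ``similar'' and states that result for. You are also right to claim only membership in $\P_n(\R)$ in (i), and \eqref{prop:qadditive2} in fact holds only up to the nonzero factor $(-1)^n\qraising{q}{q}{n}/n!$, which does not affect zeros.
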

Notice that in \textit{(i)} we can only claim that the resulting polynomials are real-rooted; no bound on the lmesh is established.
\begin{proof}
We will only show \textit{(i)}, being the proof of the other statement similar.
By assumption, $p,r \in \P^q_n(\R) $, while  Lemma \ref{lem:p.r.real} asserts that $U_n,V_n \in \P^q_n(\R_{>0})$. From Proposition \ref{prop:realrootedness} we get that
$$
U_n\boxtimes_{n,q}p \in \P^q_n(\R) \qquad \text{and} \qquad U_n\boxtimes_{n,q}r\in \P^q_n(\R).
$$
Property \eqref{eq.add.real} then yields that
$$[(U_n\boxtimes_{n,q}p)\boxplus_n(U_n\boxtimes_{n,q}r)]\in \P_n(\R).$$ 
Finally, since $V_n \in \P^q_n(\R_{>0})$,  \eqref{eq.prod.real} implies that
 $$
[(U_n\boxtimes_{n,q}p)\boxplus_n(U_n\boxtimes_{n,q}r)]\boxtimes_{n}V_n \in  \P^q_n(\R).
 $$
 The conclusion follows from the identity \eqref{prop:qadditive2}.
\end{proof}

The same ideas yield an interlacing-preservation result for the corrected $q$-additive convolution.
\begin{theorem}[Preservation of interlacing]\label{thm:qadditiveinterlacing}
 Let $g\in\P_n(\R)$ and  $p,r\in\P_n^q(\R)$  such that $p\prec g$. Then
 \begin{equation}
      \pQq{1}{1}{q^{-n}}{0}{q}{-x}\boxtimes_{n,q}\left(p\boxplus_{n,q}r\right)\prec  \pQq{1}{1}{q^{-n}}{0}{q}{-x}\boxtimes_{n,q}\left(g\boxplus_{n,q}r\right)
 \end{equation}
\end{theorem}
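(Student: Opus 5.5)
The plan is to mimic the proof of Theorem~\ref{thm:qadditivereal}, using the identity \eqref{prop:qadditive2} of Proposition~\ref{prop:qadditive.mod} to rewrite both sides as
\[
[(U_n\boxtimes_{n,q}p)\boxplus_n(U_n\boxtimes_{n,q}r)]\boxtimes_{n}V_n
\quad\text{and}\quad
[(U_n\boxtimes_{n,q}g)\boxplus_n(U_n\boxtimes_{n,q}r)]\boxtimes_{n}V_n .
\]
We then push the interlacing $p\prec g$ through each of the three operations in turn. The interlacing-preservation results we need are already available for each operation.

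\textbf{Step 1.} By Lemma~\ref{lem:p.r.real}, $U_n\in\P^q_n(\R_{>0})\subset\overline{\P^q_n}(\R_{>0})$. Since $p\in\P^q_n(\R)\subset\overline{\P^q_n}(\R)$, $g\in\P_n(\R)$ and $p\preccurlyeq g$, Remark~\ref{preserveinterlacing.negativezeros} gives
\[
U_n\boxtimes_{n,q}p\preccurlyeq U_n\boxtimes_{n,q}g .
\]
The same argument with Proposition~\ref{prop:realrootedness} shows $U_n\boxtimes_{n,q}r\in\P^q_n(\R)$, so this polynomial is real-rooted.

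\textbf{Step 2.} Apply \eqref{eq.add.inter} with the fixed real-rooted polynomial $U_n\boxtimes_{n,q}r$ to get
\[
(U_n\boxtimes_{n,q}p)\boxplus_n(U_n\boxtimes_{n,q}r)\ \prec\ (U_n\boxtimes_{n,q}g)\boxplus_n(U_n\boxtimes_{n,q}r).
\]

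\textbf{Step 3.} Since $V_n\in\P_n(\R_{>0})$ by Lemma~\ref{lem:p.r.real}, \eqref{eq.prod.inter} transfers the interlacing through $\boxtimes_n V_n$. Identity \eqref{prop:qadditive2} then gives the claim.

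The main obstacle is the gap between strict and weak interlacing. Step~1 only yields $\preccurlyeq$, whereas \eqref{eq.add.inter} and \eqref{eq.prod.inter} are stated for $\prec$, and the conclusion is strict. I would try two things, in this order.

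First, I would upgrade Step~1 to strict interlacing. All roots of $U_n$ are simple with lmesh $<q$, and $p$ has simple roots because $\lmesh p<q$. Lamprecht's Theorems~29--30 in the strict setting should give $U_n\boxtimes_{n,q}p\prec U_n\boxtimes_{n,q}g$. Failing that, I would argue by perturbation: replace $g$ by $g_\varepsilon=g+\varepsilon p$-type deformations that keep $p\prec g_\varepsilon$, and use the fact that strictly interlacing pairs are characterized by the linear combinations $\lambda f_1+\mu f_2$ being real-rooted with simple roots (Obreschkoff--Hermite--Kakeya). Linearity of all the convolutions in the first argument then reduces strict preservation to real-rootedness with simple roots of
\[
\pQq{1}{1}{q^{-n}}{0}{q}{-x}\boxtimes_{n,q}\bigl((\lambda p+\mu g)\boxplus_{n,q}r\bigr).
\]

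Second, if that route fails, I would use the weak versions of \eqref{eq.add.inter} and \eqref{eq.prod.inter}. These hold by the same Hermite--Kakeya argument or by continuity, and would give the weak interlacing $\preccurlyeq$. Strictness would then be recovered by showing that the final polynomials have no common root: a common root would force $p$ and $g$ to share a root, contradicting $p\prec g$.

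Degenerate degree cases, where $\deg g=n-1$, are handled the same way, since every identity used is coefficient-wise in $\P_n$.
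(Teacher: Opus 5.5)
Your argument is the paper's proof step for step: rewrite both sides with \eqref{prop:qadditive2}, push $p\prec g$ through $U_n\boxtimes_{n,q}\cdot$, then through $\cdot\boxplus_n(U_n\boxtimes_{n,q}r)$ by \eqref{eq.add.inter}, then through $\cdot\boxtimes_n V_n$ by \eqref{eq.prod.inter}. The paper simply asserts $U_n\boxtimes_{n,q}p\prec U_n\boxtimes_{n,q}g$ ``by Proposition~\ref{lem:preservinginterlacingMult}''. You have correctly spotted the one point it passes over: Proposition~\ref{lem:preservinginterlacingMult} and Remark~\ref{preserveinterlacing.negativezeros} only give $\preccurlyeq$. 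However, none of your patches closes this gap as written.
\begin{itemize}
\item A strict version of Lamprecht's theorems is an unverified appeal.
\item Write $\Phi(h):=\pQq{1}{1}{q^{-n}}{0}{q}{-x}\boxtimes_{n,q}(h\boxplus_{n,q}r)$. The Hermite--Kakeya reformulation only moves the problem. Real-rootedness of $\Phi(\lambda p+\mu g)$ does follow from your weak chain. It does not follow from Theorem~\ref{thm:qadditivereal}, whose hypothesis $\lambda p+\mu g\in\P^q_n(\R)$ can fail: e.g.\ with $q=\tfrac12$, $p=(x-1)(x-4)$, $g=(x-3.9)(x-100)$, the positive combination $p+tg$ vanishing at $3$ has its other zero near $4.22$. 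In any case, simplicity of the zeros of $\Phi(\lambda p+\mu g)$ is precisely the strictness you are missing.
\item The claim that ``a common zero of $\Phi(p)$ and $\Phi(g)$ would force a common zero of $p$ and $g$'' is not an argument. The map $h\mapsto\Phi(h)(x_0)$ is merely a linear functional, not a point evaluation. For pairs with $p\preccurlyeq g$, the claim is equivalent to the strict preservation you want to prove.
\end{itemize}

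Your perturbation instinct is the right one, but use it to contradict an equality. The map $Th:=U_n\boxtimes_{n,q}h$ multiplies $e_j(h)$ by $\qraising{q}{q}{n-j}/(n-j)!\neq 0$, so it is a degree-preserving linear bijection of $\P_n$. Put $A=Tp$ and $B=Tg$, so that $A\preccurlyeq B$. Suppose an equality occurred, say $\lambda_j(A)=\lambda_j(B)$ (resp.\ $\lambda_j(B)=\lambda_{j+1}(A)$). Let $B_\delta(x):=B(x+\delta)$ (resp.\ $B(x-\delta)$) and $g_\delta:=T^{-1}B_\delta$. As $\delta\to0^+$ we have $g_\delta\to g$ with $\deg g_\delta=\deg g$. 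Strict interlacing is an open condition, so $p\prec g_\delta$ (in particular $g_\delta\in\P_n(\R)$) for small $\delta$. Remark~\ref{preserveinterlacing.negativezeros} then yields $A\preccurlyeq B_\delta$. This is false, because $\lambda_j(B_\delta)=\lambda_j(A)-\delta$ (resp.\ $\lambda_j(B_\delta)=\lambda_{j+1}(A)+\delta$). Hence $A\prec B$, and the strict statements \eqref{eq.add.inter} and \eqref{eq.prod.inter} finish the proof exactly as you wrote. The same device applied to $\Phi$ itself, which is a degree-preserving bijection of $\P_n$ when $\deg r=n$, would also rescue your ``weak versions first'' plan.
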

\begin{proof}
    Since $U_n$ belongs to $\P^q_n(\R_{>0})$ (see Lemma \ref{lem:p.r.real}), we get the following facts 
    \begin{align*}
        U_n\boxtimes_{n,q} p \prec U_n\boxtimes_{n,q} g & \quad \text{ (by Proposition \ref{lem:preservinginterlacingMult})},\\
        U_n\boxtimes_{n,q} r\in \P_n^q(\R) & \quad \text{(by Proposition \ref{prop:realrootedness})}.
    \end{align*}
    Using those facts and \eqref{eq.add.inter}, we get 
    $$
    (U_n\boxtimes_{n,q} p)\boxplus_n (U_n\boxtimes_{n,q} r)\prec (U_n\boxtimes_{n,q} g)\boxplus_n (U_n\boxtimes_{n,q} r).
    $$
Finally, since $V_n$ has positive roots, equation \eqref{eq.prod.inter} yields
    $$
    [(U_n\boxtimes_{n,q} p)\boxplus_n (U_n\boxtimes_{n,q} r)]\boxtimes_nV_n\prec [ (U_n\boxplus_{n,q} g)\boxplus_n (U_n\boxtimes_{n,q} r)]\boxtimes_nV_n.
    $$
 The conclusion follows from the identity \eqref{prop:qadditive2}, Proposition \ref{lem:preservinginterlacingMult}, and line 9 of Table \ref{tab:2Q1}.
\end{proof}

To finish this section, let us 
mention another interesting case in which $\boxplus_{n,q}$ preserves the real roots.  The result follows from inductively using the fact that for $\alpha>0$ the operator $\D_q-\alpha$ preserves the $q$-lmesh. Namely, if $p\in\P^q_n(\R)$, then $(\D_q-\alpha)p\in\P^q_n(\R)$.

\begin{theorem}
If $r(x)= \widehat{r}(\D_q) [x^n]$, where $\widehat r\in\P_n(\R_{>0})$, and $p\in\P^q_n(\R_{>0})$ then 
$p\boxplus_{n,q}r\in\P_n^q(\R_{>0})$.
\end{theorem}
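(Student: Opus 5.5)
By Proposition~\ref{prop:qadditive}, $p\boxplus_{n,q} r = \widehat{r}(\D_q)\widehat{p}(\D_q)[x^n] = \widehat{r}(\D_q)\,p$. So the plan is to factor $\widehat r$ into linear factors and apply them to $p$ one at a time.

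Since $\widehat r\in\P_n(\R_{>0})$, write $\widehat r(x)=c\prod_{i=1}^m(x-\alpha_i)$ with $m\le n$, all $\alpha_i>0$, and $c\neq 0$. The operators $\D_q-\alpha_i$ commute, because they are polynomials in the single operator $\D_q$. Hence
$$
p\boxplus_{n,q} r = c\,(\D_q-\alpha_1)\cdots(\D_q-\alpha_m)\,p .
$$
The theorem then follows by induction on $m$, provided we have the following one-step claim: if $s\in\P^q_n(\R_{>0})$ has degree exactly $n$ and $\alpha>0$, then $(\D_q-\alpha)s\in\P^q_n(\R_{>0})$ and again has degree exactly $n$. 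The degree statement is immediate: $\D_q$ lowers the degree, so the leading coefficient of $(\D_q-\alpha)s$ is $-\alpha$ times that of $s$, which is nonzero. If $p$ has lower degree, I would either run the same argument in the appropriate degree or use a continuity argument; this is routine bookkeeping. The constant $c$ does not affect the zeros.

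To prove the one-step claim, put $g=\D_q s$. By \cite[Theorem 25]{lamprecht2016suffridge}, $g\in\P^q_{n-1}(\R_{>0})$ and $s\prec g$, so the zeros of $g$ strictly interlace those of $s$. Now $h:=(\D_q-\alpha)s = -\alpha\,(s - \alpha^{-1} g)$. Lemma~\ref{Jordaaninterlacing}, applied with $p_n=s$, $r_{n-1}=g$ and $c=-\alpha^{-1}$, shows that $h$ is real-rooted and that either $s\prec h\prec g$ or $h\prec s\prec g$. To decide which case holds, use signs. Assume $s$ is monic of degree $n$, so $s(0)$ has sign $(-1)^n$. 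We have $g(0)=(1-q)s'(0)$, and $s'(0)$ has sign $(-1)^{n-1}$. Therefore $h(0)=g(0)-\alpha s(0)$ has sign $(-1)^{n-1}$, while $h$ has leading coefficient $-\alpha<0$. These two facts are consistent with all $n$ zeros of $h$ being positive. To pin down positivity and the ordering, count sign changes of $h$ at $0$ and at the zeros of $s$: at each zero of $s$ we have $h=g$, and $g$ alternates in sign there. This gives one zero of $h$ in each of the $n-1$ gaps between consecutive zeros of $s$, plus one more zero to the left of $\lambda_1(s)$ but still in $(0,\infty)$. The upshot is $h\prec s\prec g$ and $h\in\P_n(\R_{>0})$ with simple zeros.

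The main obstacle is the lmesh bound $\lmesh h<q$. Here I would use Proposition~\ref{prop:lmeshinterlaing}: it suffices to show $\D_q h\in\P_{n-1}(\R_{>0})$ and $h\prec \D_q h$. But $\D_q h=(\D_q-\alpha)g$ with $g\in\P^q_{n-1}(\R_{>0})$. For $n\ge 2$, applying the sign-counting argument from the previous paragraph to $g$ in degree $n-1$ gives $\D_q h\prec g\prec\D_q g$ and $\D_q h\in\P_{n-1}(\R_{>0})$. It remains to show $h\prec\D_q h$. Both polynomials have one zero left of the smallest zero of $s$ (respectively of $g$), and I would compare them via the relation $h(x)-h(qx)=x\,\D_q h(x)$. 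Evaluate this at the zeros of $h$, using that $s(x)\prec s(qx)$. This shows that $\D_q h$ alternates in sign at consecutive zeros of $h$ and has the correct sign beyond the largest zero of $h$, so each gap contains exactly one zero of $\D_q h$. That is precisely $h\prec\D_q h$. If the sign check fails in this form, the fallback is Remark~\ref{remark:lmeshinterlaing}, using $(\D_q h)(q^{-1}x)$ in place of $\D_q h$. Once the one-step claim is established, the induction over the factors $\D_q-\alpha_i$ completes the proof.
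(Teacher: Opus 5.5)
Your reduction is the paper's own: $p\boxplus_{n,q}r=\widehat r(\D_q)\,p$, factor $\widehat r$, and induct on the one-step claim that $\D_q-\alpha$ with $\alpha>0$ preserves $\P^q_n(\R_{>0})$. The paper simply cites Fisk for that claim. The problem is that \emph{the claim is false}, so neither your sketch nor any other argument can finish, and the statement as written fails.

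Take $n=2$, $q=\tfrac12$ and $p(x)=(x-1)(x-3)$, so $\lmesh p=\tfrac13<q$. Take $\widehat r(x)=x-\tfrac12\in\P_2(\R_{>0})$, i.e.\ $r(x)=\tfrac34x-\tfrac12x^2$. Then $\D_q p=\tfrac34x-2$ and
\[
p\boxplus_{2,q}r=\bigl(\D_q-\tfrac12\bigr)p=-\tfrac12x^2+\tfrac{11}{4}x-\tfrac72=-\tfrac12(x-2)\bigl(x-\tfrac72\bigr).
\]
This polynomial has logarithmic mesh $\tfrac47>\tfrac12$. You can confirm it from the coefficient computation in the proof of Proposition~\ref{prop:qadditive}.

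Your sketch also goes wrong before the lmesh step. For $h=g-\alpha s$ with $g=\D_q s$:
\begin{itemize}
\item $h(0)$ and $h(\lambda_1(s))=g(\lambda_1(s))$ both have sign $(-1)^{n-1}$, so no zero is forced in $(0,\lambda_1(s))$.
\item $h(\lambda_n(s))=g(\lambda_n(s))>0$ while $h\to-\infty$, so the extra zero lies to the \emph{right} of $\lambda_n(s)$.
\end{itemize}
Hence $s\prec h\prec g$, not $h\prec s\prec g$. In the example the zeros are $2\in(1,\tfrac83)$ and $\tfrac72>3$. Real-rootedness and positivity of $h$ are fine.

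The decisive step, $h\prec\D_q h$, is exactly where things break, and your argument for it is circular. At a zero $\mu_k$ of $h$ you have $\D_q h(\mu_k)=-h(q\mu_k)/\mu_k$. Alternating signs therefore require $q\mu_k\in(\mu_{k-1},\mu_k)$, which is precisely $\lmesh h<q$. The interlacing $s(x)\prec s(qx)$ does not decide this, because $\mu_{k-1}$ and $\mu_k$ lie in different gaps of $s$ at positions that depend on $\alpha$.

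In the example, $\D_q h=-\tfrac38x+\tfrac{11}{8}$ vanishes at $\tfrac{11}{3}>\tfrac72$, so $h\not\prec\D_q h$. The fallback via Remark~\ref{remark:lmeshinterlaing} cannot help either, since its conclusion $\lmesh h<q$ is false here. What is missing is therefore not a lemma but a correct statement: the hypotheses, or the operator, must be changed before any proof along these lines can work.
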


\begin{proof}
This is just a reformulation of \cite[Corollary 8.69 part 6]{fisk2008polynomials} after using the interpretation of $\boxplus_{n,q}$ in terms of $\D_q$ given in Proposition \ref{prop:qadditive}.
\end{proof}

\section{\texorpdfstring{$q$-additive convolution of $q$-hypergeometric polynomials}{q-additive convolution of q-hypergeometric polynomials}} \label{sec:5}

In this section, we illustrate the application of the $q$-additive convolution to $q$-hypergeometric polynomials. The main tool will be the expression for $\boxplus_{n,q}$ in terms of $q$-differential operators obtained in Proposition \ref{prop:qadditive}. We provide a general method to translate product identities of $q$-hypergeometric functions into convolutional identities for $q$-hypergeometric polynomials. This is a $q$-analog of the results from \cite{MR4789095} in the hypergeometric setting.

Our approach relies on the notion of duality, discussed above, i.e., formulas of the form 
$$f(\D_q)[x^n]=p(x),$$
where $\D_q$ was defined in \eqref{def:qderivative}, $f$ is a $q$-hypergeometric function, and $p$ is a $q$-hypergeometric polynomial. A basic example of such a formula is
\begin{equation}\label{eq:intuition.phi}
  \pQq{r}{s}{\bm a}{\bm b}{q}{\D_q}[x^n]\propto \pQq{r'}{s'}{q^{-n},\bm b'}{\bm a'}{q}{x}.  
\end{equation}
We will explain how $\bm a$, $\bm b$ are explicitly related to $\bm a'$, $\bm b'$. We further provide variations of this formula in which the right-hand side is evaluated at $cx$ or $x^2$ (instead of $x$), or in which we use $\D_q^2$ or $\D_{q^2}$ on the left-hand side. 

Once we have explicitly computed such formulas, the procedure is as follows: starting with a product identity of $q$-hypergeometric functions,
$$f_1(x)f_2(x)=f_3(x),
$$
we formally evaluate it at $\D_q$ and use Proposition \ref{prop:qadditive}, to arrive at identities of the form
$$
p_1\boxplus_{n,q}p_2=p_3,
$$
where $p_i(x):=f_i(\D_q)[x^n]$ for $i=1,2,3$.

We will study three relevant cases and provide examples of how to use this method for specific polynomials. 

\subsection{\texorpdfstring{From $q$-hypergeometric differential operators to $q$-hypergeometric polynomials}{From q-hypergeometric differential operators to q-hypergeometric polynomials}}\

We begin by providing precise formulas of the form \eqref{eq:intuition.phi} connecting $q$-hypergeometric polynomials and $q$-hypergeometric functions in $\D_q$. The proof of these formulas is rather technical and is presented in Appendix \ref{appendixB}.
\begin{lemma}
\label{lem:qdiff.to.hypergeometric}
Let $\bm a=(a_1,\dots,a_r)\in \R^r$ and $\bm b =(b_1,\dots,b_s)\in \R^s$ be tuples of non-zero numbers satisfying restrictions~\eqref{constrainB}. For $c\neq 0$, define the corresponding constant
    \begin{equation}\label{eq:constants}
         c':= \frac{ b_1b_2\dots b_s}{ca_1a_2\dots a_r}.
    \end{equation} 
Then the following identities hold:
\begin{align}
\pQq{r}{s}{\bm a}{\bm b}{q}{c\D_q}[x^n] 
&\propto \pQq{r+s+1}{r+s}{q^{-n},\bm b^{-1} q^{1-n},\bm 0_{r}}{\bm a^{-1} q^{1-n}, \bm 0_{s}}{q}{ c'qx},
\label{eq:qdifferentialoperator1} \\
\pQq{r}{s}{\bm a }{\bm b}{q}{c\D_{q^2}}[x^n] 
& \propto\pQq{r+s+2}{r+s+1}{q^{-n},\bm b^{-1} q^{1-n},\bm 0_{r+1}}{\bm a^{-1} q^{1-n},-q, \bm 0_{s}}{q}{ c'q x}, \qquad \text{and} 
\label{eq:qdifferentialoperator2} \\
\pQq{r}{s}{\bm a }{\bm b}{q^2}{c\D_{q}^2}[x^{2n}]
&\propto\pQq{r+s+2}{r+s+1}{q^{-2n},\bm b^{-1} q^{2-2n},\bm 0_{r+1}}{\bm a^{-1} q^{2-2n},q, \bm 0_{s}}{q^2}{ c'q^2x^2},
\label{eq:qdifferentialoperator3}
\end{align}
where $\bm 0_r$ and $\bm 0_s$ stand for tuples of $r$ and $s$ zeros, respectively. 
\end{lemma}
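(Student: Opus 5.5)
The plan is a direct coefficient comparison for each of the three identities. Since $\D_q$ acts diagonally on monomials, the whole computation reduces to matching ratios of $q$-Pochhammer symbols.

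\emph{Action of the operators on monomials.} From the definition \eqref{def:qderivative}, $\D_q x^m=(1-q^m)x^{m-1}$, so
$$\D_q^k x^n=\frac{\qraising{q}{q}{n}}{\qraising{q}{q}{n-k}}x^{n-k},\qquad k\le n.$$
Similarly, $\D_{q^2}x^m=(1-q^{2m})x^{m-1}$, which gives $\D_{q^2}^k x^n=\frac{\qraising{q^2}{q^2}{n}}{\qraising{q^2}{q^2}{n-k}}x^{n-k}$. Finally, $\D_q^{2k}x^{2n}=\frac{\qraising{q}{q}{2n}}{\qraising{q}{q}{2n-2k}}x^{2n-2k}$.

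Applying the series \eqref{def:qhyperserie} term by term (only $k\le n$ survive), the left-hand side of \eqref{eq:qdifferentialoperator1} becomes
$$\sum_{k=0}^n \frac{\qraising{\bm a}{q}{k}}{\qraising{\bm b}{q}{k}\qraising{q}{q}{k}}(-1)^{(1+s-r)k}q^{(1+s-r)\binom k2}c^k\frac{\qraising{q}{q}{n}}{\qraising{q}{q}{n-k}}x^{n-k}.$$
Reindex with $j=n-k$ so that it is written in powers $x^j$. The right-hand side of \eqref{eq:qdifferentialoperator1} has its coefficient of $x^j$ given explicitly by \eqref{def:qpolynomial1}. Here $r+s$ parameters (zeros included) sit on top besides $q^{-n}$ and $r+s$ sit below, so the sign/power factor is trivial. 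The zeros contribute $\qraising{0}{q}{j}=1$, which is exactly why they appear: they balance the parameter count.

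\emph{Reversal identities.} The key tool is the reversal identity
$$\qraising{a}{q}{n-j}=\frac{\qraising{a}{q}{n}}{\qraising{a^{-1}q^{1-n}}{q}{j}}(-a^{-1})^{j}q^{\binom{j}{2}+j-nj},$$
applied to each $a_i$, each $b_i$, and to $\qraising{q}{q}{n-j}$. The last one yields $\qraising{q^{-n}}{q}{j}$ up to $(-1)^jq^{\binom j2-nj}$ factors; together with $\qraising{q}{q}{n}/\qraising{q}{q}{j}$ coming from $\D_q^{n-j}$ this produces the factor $\qraising{q^{-n}}{q}{j}/\qraising{q}{q}{j}$. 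The $b$'s produce $\qraising{\bm b^{-1}q^{1-n}}{q}{j}$ in the numerator and the $a$'s produce $\qraising{\bm a^{-1}q^{1-n}}{q}{j}$ in the denominator. The factors $(-a_i^{-1})^j$, $(-b_i^{-1})^j$ and $c^{n-j}$ combine into $(c')^j$ times a $j$-independent constant. Also, $(1+s-r)\binom{n-j}{2}$ expands to $(1+s-r)\bigl(\binom j2 - (n-1)j\bigr)$ plus a constant.

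\emph{The main obstacle.} The hardest step is bookkeeping all the quadratic exponents $q^{\binom j2}$ and the linear $q^{j}$ terms, and checking that they cancel, leaving exactly $q^j$, i.e.\ the argument $c'qx$. I will do this by collecting the exponent of $q$ as $A\binom j2+Bj+C$ and verifying $A=0$ and $B=1$. Here $A=0$ comes from $r$ copies $+$, $s$ copies $-$, the $q^{-n}$ reversal, and the $(1+s-r)$ term.

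\emph{The other two identities.} For \eqref{eq:qdifferentialoperator2}, rewrite $\qraising{q^2}{q^2}{m}=\qraising{q}{q}{m}\qraising{-q}{q}{m}$. The extra $\qraising{-q}{q}{n}/\qraising{-q}{q}{n-j}$, after reversal, gives the new lower parameter $-q$ (since $(-q)^{-1}q^{1-n}$ must be matched; I expect the reversal of $\qraising{-q}{q}{n-j}$ to give $\qraising{-q^{-n}}{q}{j}$ in the denominator, to be compared with the upper/lower placement in \eqref{eq:qdifferentialoperator2}). The extra zero on top absorbs the change in the sign/power factor. For \eqref{eq:qdifferentialoperator3}, work in base $q^2$ with $\qraising{q}{q}{2m}=\qraising{q}{q^2}{m}\qraising{q^2}{q^2}{m}$. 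The $\qraising{q}{q^2}{\cdot}$ piece gives the lower parameter $q$ after reversal with $2n$ in place of $n$. Since only even powers $x^{2j}$ occur, the result is the stated polynomial in $x^2$.
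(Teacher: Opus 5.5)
Your treatment of \eqref{eq:qdifferentialoperator1} is the paper's argument. You expand with $\D_q^k x^n=\frac{(q;q)_n}{(q;q)_{n-k}}x^{n-k}$, set $j=n-k$, and reverse every Pochhammer symbol; the paper's \eqref{qraising2.bis} is your reversal identity. This includes $(q;q)_{n-j}$, which supplies $(q^{-n};q)_j$. Your exponent bookkeeping closes:
\begin{itemize}
\item the $\binom{j}{2}$-exponent is $r-s-1+(1+s-r)=0$;
\item the linear exponent is $r(1-n)+s(n-1)+n-(1+s-r)(n-1)=1$;
\item the signs $(-1)^{(r+s+1)j}(-1)^{(1+s-r)j}$ also cancel, which you should record explicitly.
\end{itemize}

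For \eqref{eq:qdifferentialoperator2} and \eqref{eq:qdifferentialoperator3}, your sketch has an indexing slip that, followed literally, gives the wrong polynomial. In your convention $j$ is the exponent of $x$, so $n-j$ derivatives are applied and $\D_{q^2}^{n-j}x^n=\frac{(q^2;q^2)_n}{(q^2;q^2)_j}x^j$. The extra factor relative to \eqref{eq:qdifferentialoperator1} is therefore $\frac{(-q;q)_n}{(-q;q)_j}$, not $\frac{(-q;q)_n}{(-q;q)_{n-j}}$. That factor is already a constant times $1/(-q;q)_j$, i.e.\ the lower parameter $-q$. It needs no reversal and introduces no new $q^{\binom j2}$ or geometric term, so $c'$ is unchanged, and the extra upper $0$ only keeps the right-hand side balanced.

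If you reversed the factor you wrote, you would get $\frac{(-q;q)_n}{(-q;q)_{n-j}}=q^{nj-\binom j2}(-q^{-n};q)_j$. That is an upper parameter $-q^{-n}$ plus an unbalanced $q^{-\binom j2}$, which is not the stated polynomial. The same applies to \eqref{eq:qdifferentialoperator3}. There $\frac{(q;q)_{2n}}{(q;q)_{2j}}=\frac{(q^2;q^2)_n}{(q^2;q^2)_j}\cdot\frac{(q;q^2)_n}{(q;q^2)_j}$. The first factor makes the rest exactly the base-$q^2$ instance of \eqref{eq:qdifferentialoperator1} in $t=x^2$. The second gives the lower parameter $q$ directly, whereas reversing $(q;q^2)_{n-j}$ would produce an upper $q^{1-2n}$.

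With that correction your route is valid but differs from the paper's. The paper keeps $k$ as the number of derivatives and uses \eqref{eq:qdiff.2} and \eqref{eq:qdiff.3}, e.g.\ $\D_{q^2}^k x^n=q^{nk-\binom k2}(-q^{-n};q)_k\,\D_q^k x^n$. This rewrites the operator series as an ${}_{r+1}\phi_s$ with an extra upper parameter $-q^{-n}$ (resp.\ $q^{1-2n}$) and argument $-q^nc\,\D_q$ (resp.\ $cq^{2n-1}\D_{q^2}$ acting on $t^n$). It then applies \eqref{eq:qdifferentialoperator1} as a black box. The lower parameters $-q$ and $q$ arise as the reversed parameters $(-q^{-n})^{-1}q^{1-n}$ and $(q^{1-2n})^{-1}q^{2-2n}$, and $c'$ survives because the extra parameter cancels the rescaling of $c$. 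The paper's reduction reuses all the bookkeeping of \eqref{eq:qdifferentialoperator1}. Your direct version avoids the second reversal and shows plainly why $c'$ does not change: the extra factor has no geometric dependence on $j$.
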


\begin{remark}
The formulas above may simplify due to cancellations of the zero tuples in the upper and lower sets of parameters. For instance, in \eqref{eq:qdifferentialoperator1}, if $r=s$ then both zero tuples cancel each other. Otherwise, a single zero tuple of length $|r-s|$ will appear in the upper (if $r>s$) or lower (if $r<s$) set of parameters. A similar analysis is valid for \eqref{eq:qdifferentialoperator2} and \eqref{eq:qdifferentialoperator3}.
 
Furthermore, the restriction that $a_1,\dots, a_r,b_1,\dots, b_s\neq 0$ can be weakened. A more general version of the statement appears as Lemma \ref{lem:qdiff.to.hypergeometric.gen}. It can be derived as a limit case of Lemma \ref{lem:qdiff.to.hypergeometric}, by letting a parameter tend to zero. For instance, if $a_i\to 0$, then $a_i^{-1} q^{1-n}\to\infty$, and the corresponding parameter on the right-hand side of \eqref{eq:qdifferentialoperator1} disappears at the cost of scaling the polynomial by $q^{1-n}$. A similar reasoning works for \eqref{eq:qdifferentialoperator2} and \eqref{eq:qdifferentialoperator3}.
\end{remark}

In the rest of the section, we derive $q$-additive convolution formulas from the identities  \eqref{eq:qdifferentialoperator1}--\eqref{eq:qdifferentialoperator3}. Instead of being exhaustive, we focus on the three most common types of product formulas.

\subsection{\texorpdfstring{Formulas with $\boxplus_{n,q}$}{Formulas with boxplus n,q}}

We begin with the simplest product formula involving only standard $q$-hypergeometric polynomials.

\begin{theorem}
\label{thm:additive.1}
Let $\bm a_k\in\R^{r_k}$, $\bm b_k\in\R^{s_k}$, $k=1,2,3$, with non-zero entries satisfying restrictions~\eqref{constrainB}. Given nonzero constants $c_1,c_2,c_3$, define the corresponding constants $c'_1,c'_2,c'_3$ in terms of the parameters as in \eqref{eq:constants}.

If
     \begin{equation}\label{product1}
        \pQq{r_1}{s_1}{\bm a_1 }{\bm b_1}{q}{c_1x}\pQq{r_2}{s_2}{\bm a_2 }{\bm b_2}{q}{c_2x}=\pQq{r_3}{s_3}{\bm a_3 }{\bm b_3}{q}{c_3x},
    \end{equation}
then the $q$-additive convolution of
 $$
    p_1(x):=\pQq{r_1+s_1+1}{r_1+s_1}{q^{-n}\, ,\,\bm b_1^{-1} q^{1-n}\, ,\,\bm 0_{r_1}}{\bm a_1^{-1} q^{1-n}\, ,\, \bm 0_{s_1}}{q}{ c_1'qx} 
    $$
and
    $$
     p_2(x):= \pQq{r_2+s_2+1}{r_2+s_2}{q^{-n}\, ,\, \bm b_2^{-1} q^{1-n}\, ,\, \bm 0_{r_2}}{\bm a_2^{-1} q^{1-n}\, ,\, \bm 0_{s_2}}{q}{ c_2'qx}
    $$
satisfies
$$
p_1\boxplus_{n,q}p_2\propto \pQq{r_3+s_3+1}{r_3+s_3}{q^{-n}\, ,\,\bm b_3^{-1} q^{1-n}\, ,\,\bm 0_{r_3}}{\bm a_3^{-1} q^{1-n}\, ,\, \bm 0_{s_3}}{q}{c_3'qx}.
$$
\end{theorem}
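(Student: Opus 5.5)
The plan is to evaluate the product identity \eqref{product1} formally at the operator $\D_q$, apply the result to $x^n$, and combine Proposition~\ref{prop:qadditive} with Lemma~\ref{lem:qdiff.to.hypergeometric}. Set $f_k(x):=\pQq{r_k}{s_k}{\bm a_k}{\bm b_k}{q}{c_kx}$ for $k=1,2,3$. The first observation is that only finitely many terms of each $f_k$ matter. Indeed, $\D_q^m[x^n]=0$ for $m>n$, since by \eqref{def:qderivative} $\D_q x^j=(1-q^j)x^{j-1}$ and so $\D_q$ lowers degree. Hence $f_k(\D_q)[x^n]=T_n f_k(\D_q)[x^n]$, where $T_n$ denotes truncation of the power series to degree $n$. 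Because \eqref{constrainB} holds, no denominator $(\bm b_k;q)_m$ vanishes for $m\le n$, so these truncations are well defined.

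Next I would reduce the problem to the $\D_q$-duals. Identity \eqref{product1} is an identity of formal power series in $x$. Truncating both sides modulo $x^{n+1}$ gives
$$T_n f_1\cdot T_n f_2\equiv T_n f_3 \pmod{x^{n+1}}.$$
Now define $\widehat{p}_k:=T_nf_k\in\P_n$. The polynomials $\widehat p_1(\D_q)\widehat p_2(\D_q)[x^n]$ and $\widehat p_3(\D_q)[x^n]$ then coincide, because the operators differ only by terms $\D_q^m$ with $m>n$, which annihilate $x^n$. Next, by \eqref{eq:qdifferentialoperator1} in Lemma~\ref{lem:qdiff.to.hypergeometric}, $\widehat p_k(\D_q)[x^n]=f_k(\D_q)[x^n]=\kappa_k p_k(x)$, where $\kappa_k\neq 0$ and $p_3$ denotes the right-hand side in the theorem. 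Consequently $\widehat{p}_k/\kappa_k$ is exactly the $(\D_q)$-dual of $p_k$, using the uniqueness of the dual stated before \eqref{def:qdifferentialoperator}.

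Then I would apply Proposition~\ref{prop:qadditive}:
$$p_1\boxplus_{n,q}p_2=\frac{1}{\kappa_1\kappa_2}\widehat p_1(\D_q)\widehat p_2(\D_q)[x^n]=\frac{1}{\kappa_1\kappa_2}\widehat p_3(\D_q)[x^n]=\frac{\kappa_3}{\kappa_1\kappa_2}p_3,$$
which is the claimed proportionality.

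The main point requiring care is that the constants $\kappa_k$ are nonzero. This is needed twice: to identify the duals, and because the conclusion is stated only up to $\propto$. The constant comes from the proof of Lemma~\ref{lem:qdiff.to.hypergeometric} and is the top coefficient ratio. I expect it to be nonzero because the parameters are nonzero and satisfy \eqref{constrainB}, but if it does vanish the argument must be modified. The other point is that the truncation step really needs only the formal power series identity \eqref{product1}, not any convergence, and this should be stated explicitly.
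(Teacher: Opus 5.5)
Your proposal is correct and follows the paper's proof: substitute $\D_q$ into \eqref{product1}, apply to $x^n$, and combine Proposition~\ref{prop:qadditive} with \eqref{eq:qdifferentialoperator1}. Your truncation argument and your identification of the duals make explicit what the paper treats as a formal substitution. The constant $\kappa_k$ you flag is, by the computation behind \eqref{eq:qdifferentialoperator1}, a nonzero multiple of $(\bm a_k;q)_n/(\bm b_k;q)_n$. Moreover, $(\bm a_k;q)_n\neq 0$ is equivalent to the denominators $(\bm a_k^{-1}q^{1-n};q)_m$, $m\le n$, of $p_k$ being nonzero. So $\kappa_k\neq 0$ holds automatically whenever $p_k$ is defined; this follows from that equivalence rather than from \eqref{constrainB}, which only restricts $\bm b$.
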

\begin{proof}
 We formally replace the variable $x$ by $\D_q$ in \eqref{product1} and apply the resulting $q$-differential operator to $x^n$: 
   $$
   \pQq{r_1}{s_1}{\bm a_1}{\bm b_1}{q}{c_1\D_q}\pQq{r_2}{s_2}{\bm a_2 }{\bm b_2}{q}{c_2\D_q}[x^n]=\pQq{r_3}{s_3}{\bm a_3}{\bm b_3}{q}{c_3\D_q}[x^n].
   $$
The conclusion is a straightforward consequence of \eqref{prop:qadditive1} and \eqref{eq:qdifferentialoperator1}.
\end{proof}

We now compile some known product formulas of the form \eqref{product1} and use them to obtain examples of $q$-additive convolutions.
\begin{example}
The $q$-binomial theorem (see for instance \cite[Equation (1.11.1)]{koekoek2010hypergeometric}) can be written as the product
\begin{equation*}
    \pQq{0}{0}{\cdot\,}{\cdot\,}{q}{ax}=\pQq{0}{0}{\cdot\,}{\cdot\,}{q}{x}\pQq{1}{0}{a}{\cdot\,}{q}{x}.
\end{equation*}
By Theorem \ref{thm:additive.1}, we get
\begin{equation}
    \pQq{1}{0}{q^{-n}}{\cdot}{q}{a^{-1}qx}\propto\pQq{1}{0}{q^{-n}}{\cdot}{q}{qx}\boxplus_{n,q}\pQq{2}{1}{q^{-n},0}{q^{1-n}a^{-1}}{q}{a^{-1}qx}.
\end{equation}
Further simplification can be achieved by changing of variables $a\mapsto  a^{-1}$ and $qx \mapsto x$:
\begin{equation}
    \pQq{1}{0}{q^{-n}}{\cdot}{q}{ax}\propto\pQq{1}{0}{q^{-n}}{\cdot}{q}{x}\boxplus_{n,q}\pQq{2}{1}{q^{-n},0}{aq^{1-n}}{q}{ax}.
\end{equation}
\end{example}

\begin{example}
Heine's transformation formula, which is the $q$-analog of Euler’s transformation formula, see \cite[Equation (1.13.3)]{koekoek2010hypergeometric}, can be also written as the product
\begin{equation*}
    \pQq{2}{1}{a_1,a_2}{b}{q}{x}=\pQq{1}{0}{a_1a_2 b^{-1}}{\cdot}{q}{x}\pQq{2}{1}{a_1^{-1}b,a_2^{-1}b}{b}{q}{a_1a_2b^{-1}x}.
\end{equation*}
Theorem \ref{thm:additive.1} then asserts that
\begin{align*}
 &\pQq{2}{1}{q^{-n},0}{a_1^{-1}a_2^{-1}bq^{1-n}}{q}{a_1^{-1}a_2^{-1}bq x}\boxplus_{n,q}\pQq{3}{2}{q^{-n},b^{-1}q^{1-n},0}{a_1b^{-1}q^{1-n},a_2b^{-1}q^{1-n}}{q}{qx} \\
  & \propto\pQq{3}{2}{q^{-n},b^{-1}q^{1-n},0}{a_1^{-1}q^{1-n},a_2^{-1}q^{1-n}}{q}{a_1^{-1}a_2^{-1}bqx} .
\end{align*}
Relabeling as $x:=a_1^{-1}a_2^{-1}bq^2x$, $b_1:=a_1^{-1}q^{1-n}$, $b_2:=a_2^{-1}q^{1-n}$ and $a:=b^{-1}q^{1-n}$, we 
get
\begin{equation*}
 \pQq{2}{1}{q^{-n},0}{a^{-1}b_1b_2}{q}{x}\boxplus_{n,q}\pQq{3}{2}{q^{-n},a,0}{ab_1^{-1}q^{1-n},ab_2^{-1}q^{1-n}}{q}{ab_1^{-1}b_2^{-1}q^{1-n}x}  \propto   \pQq{3}{2}{q^{-n},a,0}{b_1,b_2}{q}{x}.
\end{equation*}

One can derive simpler formulas by taking limits. For instance, if we let $b_2\to 0$ and denote $b:=b_1$ to ease notation, we obtain
\begin{equation}\label{eq:Heine.limit}
\pQq{2}{1}{q^{-n}}{\cdot}{q}{x}\boxplus_{n,q}\pQq{3}{2}{q^{-n},a,0}{ab^{-1}q^{1-n}}{q}{b^{-1}x} 
\propto \pQq{3}{2}{q^{-n},a}{b}{q}{x}.
\end{equation}
Notice that one can alternatively obtain \eqref{eq:Heine.limit} by first taking the limit in Heine’s transformation formula, see \cite[Equation (1.13.7)]{koekoek2010hypergeometric}, and then using a generalized version of Theorem \ref{thm:additive.1} that allows parameters to be equal to 0. Such generalization follows from using Lemma \ref{lem:qdiff.to.hypergeometric.gen}.

In \eqref{eq:Heine.limit}, we can further take the limit $b\to 0$ to obtain
\begin{equation*}
\pQq{2}{1}{q^{-n}}{\cdot}{q}{x}\boxplus_{n,q}\pQq{3}{2}{q^{-n},a,0}{\cdot}{q}{a^{-1}q^{n-1}x}  \propto   \pQq{3}{2}{q^{-n},a}{0}{q}{x}.
\end{equation*}
Again, one can obtain this formula directly from  \cite[Equation (1.13.9)]{koekoek2010hypergeometric} using Lemma \ref{lem:qdiff.to.hypergeometric.gen}.
\end{example}

\subsection{\texorpdfstring{Formulas with $\boxplus_{2n,q}$}{Formulas with boxplus 2n,q}}\

Another common type of product formulas has standard $q$-hypergeometric functions as factors, but the product is a $q^2$-hyper\-geo\-met\-ric function evaluated at $x^2$. They also lead to convolution identities of $q$-hypergeometric polynomials:
\begin{theorem}
\label{thm:additive.2}
Let $\bm a_k\in\R^{r_k}$, $\bm b_k\in\R^{s_k}$, $k=1,2,3$, with non-zero entries satisfying restrictions~\eqref{constrainB}. Given nonzero constants $c_1,c_2,c_3$, define the corresponding constants $c'_1,c'_2,c'_3$ in terms of the parameters as in \eqref{eq:constants}.

If
     \begin{equation}\label{product2}
        \pQq{r_1}{s_1}{\bm a_1}{\bm b_1}{q}{c_1x}\pQq{r_2}{s_2}{\bm a_2}{\bm b_2}{q}{c_2x}=\pQq{r_3}{s_3}{\bm a_3}{\bm b_3}{q^2}{c_3x^2},
    \end{equation}
then the $q$-additive convolution of 
 $$
    p_1(x):=\pQq{r_1+s_1+1}{r_1+s_1}{q^{-2n},\bm b_1^{-1} q^{1-2n},\bm 0_{r_1}}{\bm a_1^{-1} q^{1-2n}, \bm 0_{s_1}}{q}{ c'_1qx} 
    $$
and
    $$
     p_2(x):= \pQq{r_2+s_2+1}{r_2+s_2}{q^{-2n},\bm b_2^{-1} q^{1-2n},\bm 0_{r_2}}{\bm a_2^{-1} q^{1-2n}, \bm 0_{s_2}}{q}{ c'_2qx}
    $$
satisfies
  $$
p_1\boxplus_{2n,q}p_2 \propto \pQq{r_3+s_3+2}{r_3+s_3+1}{q^{-2n},\bm b_3^{-1} q^{2-2n},\bm 0_{r_3+1}}{\bm a_3^{-1} q^{2-2n},q, \bm 0_{s_3}}{q^2}{ c'_3 q^2 x^2}.
$$
\end{theorem}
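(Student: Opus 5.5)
The argument mirrors the proof of Theorem~\ref{thm:additive.1}, with degree $2n$ in place of $n$. We formally substitute $x\mapsto \D_q$ in the product identity \eqref{product2} and apply both sides to $x^{2n}$:
\[
\pQq{r_1}{s_1}{\bm a_1}{\bm b_1}{q}{c_1\D_q}\pQq{r_2}{s_2}{\bm a_2}{\bm b_2}{q}{c_2\D_q}[x^{2n}]=\pQq{r_3}{s_3}{\bm a_3}{\bm b_3}{q^2}{c_3\D_q^2}[x^{2n}].
\]
This substitution is legitimate as an identity of operators on $\P_{2n}$. Both sides are formal power series in a single variable, and \eqref{product2} is an identity of such series, coefficient by coefficient. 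Since $\D_q^{m}[x^{2n}]=0$ for $m>2n$, only finitely many terms survive, so each side is an honest polynomial in $\D_q$ acting on $x^{2n}$. Consequently the truncations of the two sides to degree $\le 2n$ agree as polynomials in $\D_q$.

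For the left-hand side, we apply Lemma~\ref{lem:qdiff.to.hypergeometric}, identity \eqref{eq:qdifferentialoperator1}, with $n$ replaced by $2n$. This gives
\[
\pQq{r_k}{s_k}{\bm a_k}{\bm b_k}{q}{c_k\D_q}[x^{2n}]\propto p_k(x),\qquad k=1,2,
\]
with the parameters $q^{1-2n}$ exactly as in the statement. The key point is that the truncations to degree $\le 2n$ of the functions $\pQq{r_k}{s_k}{\bm a_k}{\bm b_k}{q}{c_k y}$ are, up to nonzero constants, the $(\D_q)$-duals $\widehat p_k$ of $p_k$ in $\P_{2n}$. Proposition~\ref{prop:qadditive}, applied with $2n$ in place of $n$, then identifies the left-hand side with $p_1\boxplus_{2n,q}p_2$ up to a nonzero constant. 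Here we use that the truncation of a product agrees with the truncation of the product of the truncations modulo terms that annihilate $x^{2n}$.

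For the right-hand side, we apply \eqref{eq:qdifferentialoperator3} directly with $(\bm a,\bm b,c)=(\bm a_3,\bm b_3,c_3)$. This produces
\[
\pQq{r_3+s_3+2}{r_3+s_3+1}{q^{-2n},\bm b_3^{-1}q^{2-2n},\bm 0_{r_3+1}}{\bm a_3^{-1}q^{2-2n},q,\bm 0_{s_3}}{q^2}{c_3'q^2x^2}.
\]
Combining the two sides yields the claimed proportionality.

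The only genuine obstacle is keeping track of the proportionality constants. We must check that each constant in \eqref{eq:qdifferentialoperator1} and \eqref{eq:qdifferentialoperator3} is nonzero; this is exactly where restriction \eqref{constrainB} and the nonvanishing of the parameters are used. Since the conclusion is stated only up to $\propto$, no explicit constants are needed. Everything else is a direct application of the earlier results, once it is accepted that \eqref{eq:qdifferentialoperator1} is valid for an arbitrary degree $N$, used here with $N=2n$.
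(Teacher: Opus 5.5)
Your proposal is correct and follows the paper's proof. You formally substitute $\D_q$ into \eqref{product2}, apply both sides to $x^{2n}$, identify the left side via \eqref{eq:qdifferentialoperator1} (with $2n$ in place of $n$) and Proposition~\ref{prop:qadditive}, and identify the right side via \eqref{eq:qdifferentialoperator3}. Your extra remarks on truncating the formal series and on the truncations being the $(\D_q)$-duals up to nonzero constants make explicit what the paper leaves implicit.
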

\begin{proof}
 We evaluate \eqref{product2} at $\D_q$ and apply the resulting $q$-differential operator to $x^{2n}$ to obtain
   $$
   \pQq{r_1}{s_1}{\bm a_1}{\bm b_1}{q}{c_1\D_q}\pQq{r_2}{s_2}{\bm a_2}{\bm b_2}{q}{c_2\D_q}[x^{2n}]=\pQq{r_3}{s_3}{\bm a_3}{\bm b_3}{q^2}{c_3\D_q^2}[x^{2n}].
   $$
The conclusion follows from \eqref{prop:qadditive1} and Lemma \ref{lem:qdiff.to.hypergeometric}. Specifically, we use \eqref{eq:qdifferentialoperator1} on the left-hand side, and \eqref{eq:qdifferentialoperator3} on the right-hand side of the equality above.
\end{proof}

We now apply this result to the case where $c_1=1$ and $c_2=-1$. Notice that these examples can be seen as a $ q$-analog of symmetrization of polynomials as studied in \cite[Sections 3.2 and 4.2]{campbell2025even}. This notion can be defined for any polynomial:
\begin{notation}
\label{not:qsym}
The \emph{$q$-symmetrization} of $p \in \P_n$ is the polynomial
    \begin{equation}\label{symmdefinition}
        \qsym{p}:=p\boxplus_{n,q} \left(\dil{-1} p \right) \text{,}
    \end{equation}
    where $(\dil{-1} p)(x):=(-1)^np(-x)$.
\end{notation}

From \eqref{def:qaddicoeff} it is readily seen that the polynomials $\qsym{p}$ are even (their odd coefficients vanish). Further properties of symmetrization are straightforward (see also \cite[Sections 3.2]{campbell2025even}): for $p,r \in \P_n$,
   \begin{enumerate}
        \item $\qsym{p} = \qsym{\dil{-1} p}$;
        \item If $p$ is an even polynomial, then $\qsym{p} = p \boxplus_{n,q} p$;
        \item $\qsym{p \boxplus_{n,q} r} = \qsym{p} \boxplus_{n,q} \qsym{r}$;
        \item $\qsym{\dil{\alpha} p} = \dil{\alpha} \qsym{p}$.
    \end{enumerate}

Next, we use Theorem \ref{thm:additive.2} to compute $\qsym{p}$ for some $q$-hypergeometric polynomials $p$; these results can be seen as $q$-analogs of results in \cite[Table 7]{campbell2025even}.
\begin{example}
The $q$-extension of Bailey's formula reads as \cite[Equation (4.9)]{srivastava1986q}  
    \begin{equation*}
        \pQq{4}{3}{ab,-ab,abq,-abq}{a^2q,b^2q,a^2b^2}{q^2}{x^2}=\pQq{2}{1}{a,-a}{a^2}{q}{x}\pQq{2}{1}{b,-b}{b^2}{q}{-x}.
    \end{equation*}
Applying Theorem \ref{thm:additive.2}, we get a formula for the $q$-symmetrization of a ${}_3\phi_2$ polynomial:
\begin{multline*}
 \pQq{3}{2}{q^{-2n},a^{-2}q^{1-2n},0}{a^{-1}q^{1-2n},-a^{-1}q^{1-2n}}{q}{qx}\boxplus_{2n,q} \pQq{3}{2}{q^{-2n},b^{-2}q^{1-2n},0}{b^{-1}q^{1-2n},-b^{-1}q^{1-2n}}{q}{-qx} \\
   \propto \pQq{5}{4}
    {q^{-2n},a^{-2}q^{1-2n},b^{-2}q^{1-2n},a^{-2}b^{-2}q^{2-2n},0,0}{a^{-1}b^{-1}q^{2-2n},-a^{-1}b^{-1}q^{2-2n},a^{-1}b^{-1}q^{1-2n},-a^{-1}b^{-1}q^{1-2n},q}
    {q^2}{q^2x^2}.
\end{multline*}
\end{example}

\begin{example}
Formula \cite[Equation (3.13)]{srivastava1986some},
    \begin{equation*}
        \pQq{4}{3}{a^2,b^2,ab,abq}{a^2b^2,-ab,-abq}{q^2}{x^2}=\pQq{2}{1}{a,b}{-ab}{q}{x}\pQq{2}{1}{a,b}{-ab}{q}{-x}
    \end{equation*}
and Theorem \ref{thm:additive.2} lead to
\begin{multline*}
\pQq{3}{2}{q^{-2n},-a^{-1}b^{-1}q^{1-2n},0}{a^{-1}q^{1-2n},b^{-1}q^{1-2n}}{q}{qx}\boxplus_{2n,q} \pQq{3}{2}{q^{-2n},-a^{-1}b^{-1}q^{1-2n},0}{a^{-1}q^{1-2n},b^{-1}q^{1-2n}}{q}{-qx}\\ 
  \propto  \pQq{5}{4}
    {q^{-2n},a^{-2}b^{-2}q^{2-2n},-a^{-1}b^{-1}q^{2-2n},-a^{-1}b^{-1}q^{1-2n},0,0}{a^{-2}q^{2-2n},b^{-2}q^{2-2n},a^{-1}b^{-1}q^{2-2n},a^{-1}b^{-1}q^{1-2n},q}
    {q^2}{q^2x^2}.
\end{multline*}

\end{example}

\begin{example}
An interesting $q$-extension of Ramanujan's formula, found in \cite[Equation (3.3)]{srivastava1986some}, is  
    \begin{equation*}
      \pQq{4}{3}{a^2,-b,-bq,a^{-2}b^2}{b^2,b,bq}{q^2}{x^2}  =\pQq{2}{1}{a,-a^{-1}b}{b}{q}{x}\pQq{2}{1}{a,-a^{-1}b}{b}{q}{-x}.
    \end{equation*}
 Applying Theorem \ref{thm:additive.2}, we get 
    \begin{multline}\label{eq:exm.Ramanujan} 
    \pQq{3}{2}{q^{-2n},b^{-1}q^{1-2n},0}{a^{-1}q^{1-2n},-ab^{-1}q^{1-2n}}{q}{qx}
        \boxplus_{2n,q}
        \pQq{3}{2}{q^{-2n},b^{-1}q^{1-2n},0}{a^{-1}q^{1-2n},-ab^{-1}q^{1-2n}}{q}{-qx} \\
      \propto  \pQq{5}{4}{q^{-2n},b^{-2}q^{2-2n},b^{-1}q^{2-2n},b^{-1}q^{1-2n},0,0}{a^{-2}q^{2-2n},-b^{-1}q^{2-2n},-b^{-1}q^{1-2n},a^2b^{-2}q^{2-2n},q}{q^2}{q^2x} .
    \end{multline}

We can simplify these formulas by taking limits on the parameters. With $b\to \infty$ and  $a:= a^{-1}q^{1-2n}$, $x:=qx$, we obtain
 \begin{equation*}
        \pQq{2}{1}{q^{-2n},0}{a}{q}{x}
        \boxplus_{2n,q}
        \pQq{2}{1}{q^{-2n},0}{a}{q}{-x} 
\propto  \pQq{3}{2}{q^{-2n},0,0}{a^{2}q^{2n},q}{q^2}{x^2}.
    \end{equation*}
Notice that one can alternatively obtain this formula by considering \cite[Equation (3.11)]{srivastava1986some}, and using Lemma \ref{lem:qdiff.to.hypergeometric.gen} to obtain a strengthened version of Theorem \ref{thm:additive.2}, allowing parameters to be equal to 0.

 If instead, in \eqref{eq:exm.Ramanujan} we let $a\to 0$, then 
    \begin{multline*}
        \pQq{5}{4}{q^{-2n},b^{-2}q^{2-2n},b^{-1}q^{2-2n},b^{-1}q^{1-2n},0}{-b^{-1}q^{2-2n},-b^{-1}q^{1-2n},q}{q^2}{q^{2n}x}\propto\\
        \pQq{3}{2}{q^{-2n},b^{-1}q^{1-2n},0}{\cdot}{q}{q^{2n}x}
        \boxplus_{2n,q}
        \pQq{3}{2}{q^{-2n},b^{-1}q^{1-2n}}{\cdot}{q}{-q^{2n}x}
    \end{multline*}
Alternatively, this formula follows from \cite[Equation (3.12)]{srivastava1986some}, and Lemma \ref{lem:qdiff.to.hypergeometric.gen}.
\end{example}

\subsection{\texorpdfstring{Formulas with $\boxplus_{n,q^2}$}{Formulas with boxplus n, q squared}}\

Finally, we consider the case where the factors in the formula are $q^2$-hypergeometric functions. Now, the proof requires evaluating the product formula in $\D_{q^2}$ and using the identities with $q^2$ instead of $q$.
\begin{theorem}
\label{thm:additive.3}
Let $\bm a_k\in\R^{r_k}$, $\bm b_k\in\R^{s_k}$, $k=1,2,3$, with non-zero entries satisfying restrictions~\eqref{constrainB}. Given nonzero constants $c_1,c_2,c_3$, define the corresponding constants $c'_1,c'_2,c'_3$ in terms of the parameters as in \eqref{eq:constants}.

If
     \begin{equation} \label{product3}
        \pQq{r_1}{s_1}{\bm a_1 }{\bm b_1}{q^2}{c_1x}\pQq{r_2}{s_2}{\bm a_2 }{\bm b_2}{q^2}{c_2x}=\pQq{r_3}{s_3}{\bm a_3 }{\bm b_3}{q}{c_3x},
    \end{equation}
then the $q$-additive convolution of 
 $$
    p_1(x):=\pQq{r_1+s_1+1}{r_1+s_1}{q^{-2n},\bm b_1^{-1} q^{2-2n},\bm 0_{r_1}}{\bm a_1^{-1} q^{2-2n}, \bm 0_{s_1}}{q^2}{c'_1q^2x}
    $$
and
    $$
     p_2(x):= \pQq{r_2+s_2+1}{r_2+s_2}{q^{-2n},\bm b_2^{-1} q^{2-2n},\bm 0_{r_2}}{\bm a_2^{-1} q^{2-2n}, \bm 0_{s_2}}{q^2}{c'_2q^2x}
    $$
satisfies
  $$
p\boxplus_{n,q^2}r \propto \pQq{r_3+s_3+2}{r_3+s_3+1}{q^{-n},\bm b_3^{-1} q^{1-n},\bm 0_{r_3+1}}{\bm a_3^{-1} q^{1-n},-q, \bm 0_{s_3}}{q}{c'_3qx}.
$$
\end{theorem}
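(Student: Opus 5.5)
Following the proofs of Theorems \ref{thm:additive.1} and \ref{thm:additive.2}, we evaluate the product identity \eqref{product3} at the operator $\D_{q^2}$. We then apply both sides to the monomial $x^n$ and use the $(\D_{q^2})$-dual description of $\boxplus_{n,q^2}$. Concretely, Proposition \ref{prop:qadditive} holds verbatim with $q$ replaced by $q^2$, since its proof is purely formal in the base. It gives, for $p_k(x):=\widehat{p}_k(\D_{q^2})[x^n]$,
\[
[p_1\boxplus_{n,q^2}p_2](x)=\widehat{p}_1(\D_{q^2})\,\widehat{p}_2(\D_{q^2})[x^n].
\]
All the series involved truncate when acting on $x^n$, because $\D_{q^2}^{k}x^n=0$ for $k>n$. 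Hence formally substituting $x\mapsto\D_{q^2}$ in \eqref{product3} is legitimate: it is an identity of formal power series in $x$, and therefore of operators on $\P_n$. This yields
\[
\pQq{r_1}{s_1}{\bm a_1}{\bm b_1}{q^2}{c_1\D_{q^2}}\pQq{r_2}{s_2}{\bm a_2}{\bm b_2}{q^2}{c_2\D_{q^2}}[x^n]=\pQq{r_3}{s_3}{\bm a_3}{\bm b_3}{q}{c_3\D_{q^2}}[x^n].
\]

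Next we identify each side by Lemma \ref{lem:qdiff.to.hypergeometric}. For the left-hand factors we use \eqref{eq:qdifferentialoperator1} with the base $q$ replaced by $q^2$, which is allowed because that lemma is stated for an arbitrary base in $(0,1)$. This gives
\[
\pQq{r_k}{s_k}{\bm a_k}{\bm b_k}{q^2}{c_k\D_{q^2}}[x^n]\propto \pQq{r_k+s_k+1}{r_k+s_k}{q^{-2n},\bm b_k^{-1}q^{2-2n},\bm 0_{r_k}}{\bm a_k^{-1}q^{2-2n},\bm 0_{s_k}}{q^2}{c_k'q^2x}=p_k(x).
\]
For the right-hand side the base of the series is $q$ but the operator is $\D_{q^2}$, which is exactly \eqref{eq:qdifferentialoperator2}. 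It produces the ${}_{r_3+s_3+2}\phi_{r_3+s_3+1}$ polynomial in base $q$ with the extra lower parameter $-q$, the extra upper zero and argument $c_3'qx$, as claimed. Since proportionality constants multiply through the bilinear operation $\boxplus_{n,q^2}$, combining these gives $p_1\boxplus_{n,q^2}p_2\propto$ the stated polynomial. Here $p,r$ in the statement are read as $p_1,p_2$.

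The only real obstacle is bookkeeping: the degree index must be consistent. As printed, $p_1,p_2$ carry $q^{-2n}$ in base $q^2$, i.e.\ $(q^2)^{-n}$. This is exactly what \eqref{eq:qdifferentialoperator1} with base $q^2$ acting on $x^n$ produces, with $(q^2)^{1-n}=q^{2-2n}$. The right side carries $q^{-n}$ in base $q$, matching \eqref{eq:qdifferentialoperator2} on $x^n$. I would check carefully that $c_k'$ in \eqref{eq:constants} is base-independent, which it is, since it depends only on the parameters and $c_k$. I would also confirm that the hidden proportionality constants are nonzero under \eqref{constrainB}, applied with $q^2$ for the first two factors. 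With these checks the proof is a two-line consequence of the lemma, exactly parallel to Theorem \ref{thm:additive.1}.
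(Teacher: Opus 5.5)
Your proposal is correct and follows the paper's argument exactly: evaluate \eqref{product3} at $\D_{q^2}$, apply both sides to $x^n$, use Proposition~\ref{prop:qadditive} in base $q^2$, and identify the two factors via \eqref{eq:qdifferentialoperator1} with $q$ replaced by $q^2$ and the right-hand side via \eqref{eq:qdifferentialoperator2}. Your extra remarks (truncation on $\P_n$, bilinearity absorbing the constants, reading $p,r$ as $p_1,p_2$) only make explicit what the paper leaves implicit. One refinement: the nonvanishing of the hidden constants amounts to $(\bm a_k;q^2)_n\neq 0$ and $(\bm a_3;q)_n\neq 0$, which is exactly the admissibility of the lower parameters $\bm a_k^{-1}q^{2-2n}$ and $\bm a_3^{-1}q^{1-n}$ in the displayed polynomials.
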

\begin{proof}
 We evaluate \eqref{product3} at $\D_{q^2}$ and apply the resulting operator to $x^{n}$, obtaining
   $$
   \pQq{r_1}{s_1}{\bm a_1 }{\bm b_1}{q^2}{c_1\D_{q^2}}\pQq{r_2}{s_2}{\bm a_2 }{\bm b_2}{q^2}{c_2\D_{q^2}}[x^{n}]=\pQq{r_3}{s_3}{\bm a_3 }{\bm b_3}{q}{c_3\D_{q^2}}[x^{n}].
   $$
The conclusion follows from \eqref{prop:qadditive1} and Lemma \ref{lem:qdiff.to.hypergeometric}. Specifically, on the right-hand side, we use \eqref{eq:qdifferentialoperator2}, while on the left-hand side, we use \eqref{eq:qdifferentialoperator1} with $q^2$ instead of $q$.
\end{proof}

We now provide some examples:
\begin{example}
The $q$-extension of Clausen's identity is \cite[Equation 4.12]{srivastava1986q},
    \begin{equation*}
      \pQq{4}{3}{a, b, \sqrt{ab}, -\sqrt{ab}}{ab, \sqrt{abq},-\sqrt{abq}}{q}{x}  =\pQq{2}{1}{a,b}{abq}{q^2}{x} \pQq{2}{1}{a, b}{abq}{q^2}{qx}.
    \end{equation*}
Using Theorem \ref{thm:additive.3}, and normalizing $x:=q^2x$ we obtain
\begin{multline*}
  \pQq{3}{2}{q^{-2n},a^{-1}b^{-1}q^{1-2n},0}{a^{-1}q^{2-2n},b^{-1}q^{2-2n}}{q^{2}}{qx}\boxplus_{n,q^{2}}\pQq{3}{2}{q^{-2n},a^{-1}b^{-1}q^{1-2n},0}{a^{-1}q^{2-2n},b^{-1}q^{2-2n}}{q^{2}}{x} \\
  \propto   \pQq{5}{4}{q^{-n},a^{-1}b^{-1}q^{1-n},(ab)^{-1/2}q^{1/2-n},-(ab)^{-1/2}q^{1/2-n},0,0}{a^{-1}q^{1-n},b^{-1}q^{1-n},(ab)^{-1/2}q^{1-n},-(ab)^{-1/2}q^{1-n},-q}{q}{x}.  
\end{multline*}
\end{example}

\begin{example}
From \cite[Equation 4.11]{srivastava1986q} we get
    \begin{equation*}
        \pQq{4}{3}{ab,-ab,ab\sqrt{q},-ab\sqrt{q}}{a^2b^2,-a^2,-b^2q}{q}{x}=\pQq{2}{1}{a^2,a^2q}{a^4}{q^2}{x}\pQq{2}{1}{b^2,b^2q}{b^4q^2}{q^2}{qx}.
    \end{equation*}
Using Theorem \ref{thm:additive.3}, and letting $x:=qx$ we get
\begin{multline*}
  \pQq{3}{2}{q^{-2n},a^{-4}q^{2-2n},0}{a^{-2}q^{2-2n},a^{-2}q^{1-2n}}{q^{2}}{x}\boxplus_{n,q^{2}}\pQq{3}{2}{q^{-2n},b^{-4}q^{-2n},0}{b^{-2}q^{2-2n},b^{-2}q^{1-2n}}{q^{2}}{qx} \\
 \propto  \pQq{6}{5}{q^{-n},a^{-2}b^{-2}q^{1-n},-a^{-2}q^{1-n},-b^{-2}q^{-n},0,0}{a^{-1}b^{-1}q^{1-n},-a^{-1}b^{-1}q^{1-n},a^{-1}b^{-1}q^{\frac{1}{2}-n},-a^{-1}b^{-1}q^{\frac{1}{2}-n},-q}{q}{x}  .
\end{multline*}
Sending $a$ or $b$ to $0$ or $\infty$, we get some interesting formulas. Alternatively, one can obtain these cases directly from \cite[Equation (3.13)--(3.16)]{srivastava1986some}, and using Lemma \ref{lem:qdiff.to.hypergeometric.gen} to obtain a generalized version of Theorem \ref{thm:additive.3} that allows parameters to be equal to 0. 

For instance, with $b\to \infty$ we get
\begin{equation*} 
\pQq{3}{2}{q^{-2n},a^{-4}q^{2-2n},0}{a^{-2}q^{2-2n},a^{-2}q^{1-2n}}{q^{2}}{x}\boxplus_{n,q^{2}}\pQq{1}{0}{q^{-2n}}{\cdot}{q^{2}}{qx} 
\propto \pQq{2}{1}{q^{-n},-a^{-2}q^{1-n}}{-q}{q}{x}.
\end{equation*}
If we let $a\to \infty$, then 
\begin{equation*}
  \pQq{1}{0}{q^{-2n}}{\cdot}{q^{2}}{x}\boxplus_{n,q^{2}}\pQq{3}{2}{q^{-2n},b^{-4}q^{-2n},0}{b^{-2}q^{2-2n},b^{-2}q^{1-2n}}{q^{2}}{qx} \propto     \pQq{2}{1}{q^{-n},-b^{-2}q^{-n}}{-q}{q}{x}.
\end{equation*}
\end{example}

\appendix
 
\section{Polynomials with real roots.} \label{appendixA}

Here we will prove both claims of Lemma \ref{lem:p.r.real}: 
\begin{equation*}
    U_n(x):=\sum_{k=0}^n\frac{\qraising{q^{-n}}{q}{k}}{k!}x^k \in \P^q_n(\R_{>0})\quad \text{ and }\quad V_n(x):=\sum_{k=0}^n\frac{\raising{-n}{k}}{\qraising{q}{q}{k}}q^{\binom{k}{2}}x^k \in \P^q_n(\R_{>0}).
\end{equation*}
\begin{proof}[Proof of Lemma \ref{lem:p.r.real}] We prove our assertions by induction on the degree $n$. 

\underline{$U_n\in \P^q_n(\R_{>0})$}: we will use the following straightforward identities: for $n\ge 2$,
\begin{align}
U'_n(x)&=(1-q^{-n})U_{n-1}(x), \label{auxiliarpderivative}\\
(1-q^n)U_n(x)&=x\D_qU_n(x)+(1-q^n)U_{n-1}(x). \label{eq:pnpn-1}
\end{align}

Now, for $n=2$ we notice that
    \begin{equation*}
        U_2(x)=1+(1-q^{-2})x+\tfrac{(1-q^{-2})(1-q^{-1})}{2}\, x^2,
    \end{equation*}
has roots
$$
\lambda_1(U_{2})=q\tfrac{1-q^2-\sqrt{(1-q)^3(q+1)}}{(1-q)(1-q^2)}, \quad  \lambda_2(U_{2})=q\tfrac{1-q^2+\sqrt{(1-q)^3(q+1)}}{(1-q)(1-q^2)},
$$
with $0<\lambda_1(U_{2})<\lambda_2(U_{2})$. Thus, its logarithmic root separation is
$$
\lmesh{U_2}=\frac{\lambda_1(U_{2})}{\lambda_2(U_{2})}=\frac{1-q^2-\sqrt{(1-q)^3(q+1)}}{1-q^2+\sqrt{(1-q)^3(q+1)}}=\frac{1-\sqrt{1-q^2}}{q}<q.
$$
Therefore, $U_2\in\P_2^q(\R_{>0})$. 

For $n>2$, let us denote
$$
\alpha_j:=\lambda_j(U_{n-1}), \quad j=1, \dots, n-1.
$$
By induction hypothesis we assume that $U_{n-1}\in\P^q_{n-1}(\R_{>0})$, which by \eqref{lmeshRplus} implies 
$$
\alpha_1  < q\alpha_2  < \alpha_2  < \dots < \alpha_{n-2}  <  q\alpha_{n-1}  < \alpha_{n-1}.
$$
By \eqref{auxiliarpderivative}, we know that $U'_{n}=(1-q^{-n})U_{n-1}$, so the zeros of $U_{n-1}$ are the critical values of $U_{n}$, and $U_n$ is strictly monotonic on each interval $(\alpha_j , \alpha_{j+1}) $, $j=0,\cdots,n-1$, with the convention of $\lambda_0 =-\infty$ and $\lambda_n =+\infty$. Using the monotonicity of $U_n$, we obtain for
$j\in\{1,\cdots n-2\}$,
\begin{equation}\label{pmonotonicity}
    U_n(q\, \alpha_{j} )<U_n(\alpha_{j} ) \quad \Longrightarrow \quad  U_n(q\, \alpha_{j+1} )>U_n(\alpha_{j+1} ).
\end{equation}
which implies, 
\begin{equation}\label{pchangeofsign}
\sign{ U_n(\alpha_{j} ) -U_n(q\, \alpha_{j}) } =-\sign{ U_n(\alpha_{j+1})-U_n(q\, \alpha_{j+1}) }.
\end{equation}
By the definition of $\D_q$,
$$
\D_qU_n(x)=\frac{U_n(x)-U_n(qx)}{x},
$$
and \eqref{pchangeofsign} shows that $\D_qU_n$ changes of sign in each interval $(\alpha_j,\alpha_{j+1})$; this gives us the location of $n-1$ zeros. If we assume that for some $j\in\{1,\cdots,n-1\}$,  $\D_qU_n(\alpha_j)=0$, then replacing it into the expression for $\D_q$ would give us that $U_n(q\alpha_j)=U_n(\alpha_j)$; this contradicts \eqref{pmonotonicity}. Hence, the remaining zero of $\D_qU_n$ belongs either to $(-\infty,\alpha_1)$ or $(\alpha_{n-1},+\infty)$.

For $U_n$, the leading coefficient is $(-1)^n$ and since its degree is $n$, then $U_n$ decrease in $(-\infty,\alpha_1)$, this implies that 
$$
\D_qU_n(\alpha_1)<0.
$$
Using that the sign of the leading coefficient of $\D_qU_n$ is $(-1)^n$ and that its degree is $n-1$, we get that $\D_qU_n$ is increasing and negative on $(-\infty,\lambda_1(\D_qU_n))$. Then $\alpha_1<\lambda_1(\D_qU_n)$, hence $\D_qU_n$ does not have zeros on the interval $(-\infty,\alpha_1)$. Therefore, that last zero must belong in $(\alpha_{n-1},+\infty)$. Note that the location of each zero implies $U_{n-1}\prec\D_q(U_n)$,
or equivalently
\begin{equation}\label{eq:interlacinp}
   x\D_qU_n(x) \prec U_{n-1}(x).
\end{equation}
Note that \eqref{eq:pnpn-1} satisfies the conditions of Lemma \ref{Jordaaninterlacing} and using \eqref{eq:interlacinp}, we get that 
$$
U_n(x)\prec x\D_qU_n(x) \quad\text{or}\quad x\D_qU_n(x)\prec U_n(x).
$$
Since $U_n(0)=1$, $U_n$ decreases on $(-\infty,\alpha_1)$, and $\alpha_1$ is the smallest critical value of $U_n$, we conclude that all its zeros are strictly positive, therefore 
$$
x\D_qU_n(x)\prec U_n(x)\quad \Longrightarrow \quad U_n\prec \D_q(U_n).
$$
Then Proposition \ref{prop:lmeshinterlaing} yields $U_n\in \P^q_n(\R_{>0})$.

\medskip

\underline{$V_n\in \P^q_n(\R_{>0})$}: we will use the following straightforward identities: for $n\ge 2$, 
\begin{align}
nV_n (x) & =- W_n(q^{-1}x) +xV_n'(x) ,\quad W_n (x):= \D_qV_n(x), \label{eq:rn} \\
         -nV_{n-1}(x)&=W_n(q^{-1}x) . \label{auxiliarpderivative2}
\end{align}

Again, for $n=2$, the polynomial
$$
 V_2=1-\tfrac{2}{1-q}x+\tfrac{2}{(1-q)(1-q^2)}qx^2,
$$
has positive roots
$$
\lambda_1(V_{2})=\tfrac{1-q^2-\sqrt{(1-q)^3(q+1)}}{2q}, \quad \text{and } \lambda_2(V_{2})=\tfrac{1-q^2+\sqrt{(1-q)^3(q+1)}}{2q},
$$
with $0<\lambda_1(V_{2})<\lambda_2(V_{2})$. Moreover, its logarithmic root separation is
$$
\lmesh{V_2}=\frac{1-q^2-\sqrt{(1-q)^3(q+1)}}{1-q^2+\sqrt{(1-q)^3(q+1)}}=\lmesh{U_2}<q.
$$
Suppose that for $n>2$, $V_{n-1}\in\P^q_{n-1}(\R_{>0})$. Denote the zeros of $V_{n-1}$ by $\beta_j:=\lambda_j(V_{n-1})$, since its the logarithmic mesh is less than $q$,
\begin{equation}\label{zerosrn-1}
 0<\beta_1<q^{-1}\beta_1<\cdots<q^{-1}\beta_j<\beta_{j+1}<q^{-1}\beta_{j+1}<\cdots<\beta_{n-1}. 
\end{equation}
Letting $x=\beta_j$ in \eqref{auxiliarpderivative2} we get
 \begin{equation*}
     0=V_{n-1}(\beta_j)=[\D_qV_n](q^{-1}\beta_j)=\frac{V_n(q^{-1}\beta_j)-V_n(\beta_j)}{q^{-1}\beta_j},
 \end{equation*}
 then $V_n(\beta_j)=V_n(q^{-1}\beta_j)$ for  $j=1,\cdots,n-1$. Using Rolle's Theorem we get that the zeros of $V'_n$ satisfy $\lambda_j(V'_n)\in(\beta_j,q^{-1}\beta_j)$. By \eqref{zerosrn-1},
 \begin{equation*}
     [\D V_n](q^{-1}x)\prec V'_n \quad \Longrightarrow \quad  xV'_n\prec [\D V_n](q^{-1}x).
 \end{equation*}
 Note that the sign of the leading coefficient of $V_n$ is equal to $(-1)^n$ and its critical values are positive, then  $V_n$ decreases on $(-\infty,\lambda_1(V'_n))$ and
 since $V_n(0)=1$,  we conclude that all its zeros are strictly positive. Therefore \eqref{eq:rn} satisfies the conditions of Lemma \ref{Jordaaninterlacing}, then we get 
$$
V_n\prec [\D_qV_n](q^{-1}x).
$$
By Remark \ref{remark:lmeshinterlaing}, we conclude $V_n\in\P^q_n(\R_{>0})$.
\end{proof}

\section{Proof of technical lemmas} \label{appendixB}

The goal of this section is to prove Lemma \ref{lem:qdiff.to.hypergeometric}. Although it requires lengthy and cumbersome computations, the proof is, in a certain sense, straightforward once one understands the effect of applying the operators $\D_q$, $\D_{q^2}$, and $\D^2_q$ to powers of $x$.

Recall that for $q\in (0,1)$, our (modified) \textbf{$q$-derivative} operator $\mathfrak{d}_q$ is defined as
\begin{equation*}
    \mathfrak{d}_q f(x) := \begin{cases} 
\frac{f(x) - f(qx)}{x}, & x \neq 0 \\ 
f'(0), & x = 0.
\end{cases}
\end{equation*}
It is readily seen that $\mathfrak{d}_q$ is linear. When applied to powers of $x$ it yields, for $n\in\N$, that
\begin{equation} \label{derivadaxn}
\D_q x^n = (1-q^n)x^{n-1}\quad \text{and} \quad\D_q 1=0.
\end{equation}
Thus, $q$-differentiating $k$ times gives
\begin{equation}
\D_q^{k}[x^n]=x^{n-k} \prod_{i=0}^{k-1}(1-q^{n-i})=x^{n-k} (-1)^kq^{nk-\binom{k}{2}}=\frac{\qraising{q}{q}{n}}{\qraising{q}{q}{n-k}}x^{n-k}\label{eq:qdiff.1}.
\end{equation}
We can use this formula to check that repeatedly applying $\D_{q^2}$ is related to repeatedly applying $\D_{q}$:
\begin{equation}\label{eq:qdiff.2}
    \D_{q^2}^{k}[x^{n}]=x^{n-k} \prod_{i=0}^{k-1}(1-q^{n-i})(1+q^{n-i})= q^{nk-\binom{k}{2}}\qraising{-q^{-n}}{q}{k
    }\D^{k}_{q}[x^{n}].
\end{equation}
 Similarly we notice that applying $\D_q^{2}$ repeatedly is related to $\D_{q^2}$. Specifically, 
\begin{equation}\label{eq:qdiff.3}
    \D_q^{2k}[x^{2n}]=(-1)^kq^{2nk-2\binom{k}{2}-k}\qraising{q^{-2n+1}}{q^2}{k
    }\D^{k}_{q^2}[t^n]\bigg\vert_{t=x^2}.
\end{equation}

The last technical bit that we require is the following formula, a direct consequence of the definition of the $q$-factorials \eqref{eq:def.qfactorial},
\begin{equation}\label{qraising2.bis}
 \qraising{a}{q}{k}(-1)^k q^{-\binom{k}{2}}=  \frac{(-1)^nq^{-\binom{n}{2}}\qraising{a}{q}{n}}{a^{n-k}\qraising{a^{-1} q^{1-n}}{q}{n-k}} .    
\end{equation}

\begin{proof}[Proof of Lemma \ref{lem:qdiff.to.hypergeometric}]
We begin by proving \eqref{eq:qdifferentialoperator1}. The strategy is to expand the left-hand side using \eqref{eq:qdiff.1}, and applying \eqref{qraising2.bis} to each parameter in order to obtain the $q$-hypergeometric polynomial on the left-hand side:
\begin{align*}
\pQq{r}{s}{\bm a}{\bm b}{q}{c\D_q}[x^n] 
&= \sum_{k=0}^{\infty}\frac{\qraising{\bm a}{q}{k}}{\qraising{\bm b}{q}{k}\qraising{q}{q}{k}}(-1)^{(s-r+1)k}q^{(s-r+1)\binom{k}{2}}c^k \D_q^k x^n \\
&\propto \sum_{k=0}^{n}  \left( \frac{b_1\cdots b_sq}{a_1 \cdots a_r} \right)^{n-k} \frac{\qraising{\bm b^{-1} q^{1-n}}{q}{n-k}\qraising{q^{-n}}{q}{n-k} }{{\qraising{\bm a^{-1} q^{1-n}}{q}{n-k}}} \frac{\qraising{q}{q}{n}}{\qraising{q}{q}{n-k}}\frac{x^{n-k}}{c^{n-k}} \\
& \propto  \sum_{k=0}^{n} \frac{\qraising{\bm b^{-1} q^{1-n}}{q}{n-k}\qraising{q^{-n}}{q}{n-k} }{{\qraising{\bm a^{-1} q^{1-n}}{q}{n-k}}\qraising{q}{q}{n-k}} \left( \frac{b_1b_2\cdots b_s}{ca_1a_2 \cdots a_r} qx\right)^{n-k}\\
&\propto \pQq{r+s+1}{r+s}{q^{-n},\bm b^{-1} q^{1-n},\bm 0_{r}}{\bm a^{-1} q^{1-n}, \bm 0_{s}}{q}{ c' qx},
\end{align*}
where $c':= \frac{b_1b_2\dots b_s}{ca_1a_2\dots a_r}$ is as in 
\eqref{eq:constants}. This concludes the proof of \eqref{eq:qdifferentialoperator1}.

To prove \eqref{eq:qdifferentialoperator2}, we will expand the right-hand side and use \eqref{eq:qdiff.2} to reduce it to the previous formula that we just derived.
 \begin{align*}
       \pQq{r}{s}{\bm a }{\bm b}{q}{c\D_{q^2}}[x^{n}] &=\sum_{j=0}^n\frac{\qraising{\bm a}{q}{k}}{\qraising{\bm b}{q}{k}}(-1)^{(s-r+1)k}q^{(s-r+1)\binom{k}{2}}\frac{c^k}{\qraising{q}{q}{k}}\D_{q^2}^k[x^{n}] \\
       &=\sum_{j=0}^n\frac{\qraising{\bm a}{q}{k}\qraising{-q^{-n}}{q}{k}}{\qraising{\bm b}{q}{k}}(-1)^{(s-r)k}q^{(s-r)\binom{k}{2}}\frac{(-q^nc)^k}{\qraising{q}{q}{k}}\D_{q}^k[x^{n}].\\
        &=\pQq{r+1}{s}{\bm a,-q^{-n} }{\bm b}{q}{-q^nc\D_q}[x^{n}]  
    \end{align*}
Equation \eqref{eq:qdifferentialoperator2} then follows from invoking \eqref{eq:qdifferentialoperator1}, and noticing that the dilation constant is given by $qc'$ where $c'= \frac{b_1b_2\dots b_s}{-q^{n}c a_1a_2\dots a_r(-q^{-n})}:= \frac{b_1b_2\dots b_s}{c a_1a_2\dots a_r}$, as in \eqref{eq:constants}.

Finally, we now use \eqref{eq:qdiff.3} and our first formula to prove \eqref{eq:qdifferentialoperator3}. 
 \begin{align*}
         \pQq{r}{s}{\bm a}{\bm b}{q^2}{c\D_q^2}[x^{2n}] 
         &= \sum_{j=0}^n\frac{\qraising{\bm a}{q^2}{k}}{\qraising{\bm b}{q^2}{k}}(-1)^{(s-r+1)k}q^{2(s-r+1)\binom{k}{2}}\frac{c^k\D_q^{2k}}{\qraising{q^2}{q^2}{k}}[x^{2n}]\\
         &=\pQq{r+1}{s}{\bm a,q^{-2n+1}}{\bm b}{q^2}{cq^{2n-1}\D_{q^2}}[t^{n}]\bigg\vert_{t=x^2}\\
        &\propto \pQq{r+s+2}{r+s+1}{q^{-n},\bm b^{-1} q^{2-2n},\bm 0_{r+1}}{\bm a^{-1} q^{2-2n}, q,\bm 0_{s}}{q^2}{ c' q^2 x^2},
    \end{align*}
where in the last line we used \eqref{eq:qdifferentialoperator1}, with $q^2$ instead of $q$, and the dilation constant is $q^2c'$ where
$$c'= \frac{ b_1b_2\dots b_s}{q^{2n-1}c a_1a_2\dots a_rq^{-2n+1}}= \frac{ b_1b_2\dots b_s}{c a_1a_2\dots a_r},$$
as stated.
\end{proof}

The assumption in Lemma \ref{lem:qdiff.to.hypergeometric} that the parameters $a_1,\dots, a_r,b_1,\dots, b_s$ are nonzero can be relaxed. One can informally derive this formula by simply taking $a_l=0$ or $b_l=0$ in Lemma \ref{lem:qdiff.to.hypergeometric}, with the convention that the corresponding term ``$0^{-1}q^{1-n}=\infty$'' on the right-hand side will actually not appear, and adjusting the constant $c'$ by a factor of $q^{1-n}$ (multiplying or dividing) each time we use a zero. We state the result here for completeness.
\begin{lemma}
\label{lem:qdiff.to.hypergeometric.gen}
Consider a constant $c\neq 0$ and tuples of parameters $\bm a=(a_1,\dots,a_{r'})$ and $\bm b =(b_1,\dots,b_{s'})$, where $a_1,\dots, a_{r'},b_1,\dots, b_{s'}$ are non-zero real numbers satisfying restrictions~\eqref{constrainB}. Let $r'',s''$ be non-negative integers and set $r=r'+r''$ and $s=s'+s''$. As before, define the constant
\begin{equation}\label{cor:constants.gen}
    c':= \frac{ b_1b_2\dots b_s}{ca_1a_2\dots a_r}.
\end{equation}
Then the following three formulas hold:
\begin{align}
\pQq{r}{s}{\bm a, \bm 0_i}{\bm b, \bm 0_j}{q}{c\D_q}[x^n] 
&\propto \pQq{r+s'+1}{s+r'}{q^{-n},\bm b^{-1} q^{1-n},\bm 0_{r}}{\bm a^{-1} q^{1-n}, \bm 0_{s}}{q}{ c'q^{(j-i)(1-n)+1}x},
\label{eq:qdifferentialoperator1.gen} \\
\pQq{r}{s}{\bm a, \bm 0_i }{\bm b, \bm 0_j}{q}{c\D_{q^2}}[x^n] 
& \propto\pQq{r+s'+2}{s+r'+1}{q^{-n},\bm b^{-1} q^{1-n},\bm 0_{r+1}}{\bm a^{-1} q^{1-n},-q, \bm 0_{s}}{q}{ c'q^{(j-i)(1-n)+1} x}, \quad \text{and} 
\label{eq:qdifferentialoperator2.gen} \\
\pQq{r}{s}{\bm a, \bm 0_i }{\bm b, \bm 0_j}{q^2}{c\D_{q}^2}[x^{2n}]
&\propto\pQq{r+s'+2}{s+r'+1}{q^{-2n},\bm b^{-1} q^{2-2n},\bm 0_{r+1}}{\bm a^{-1} q^{2-2n},q, \bm 0_{s}}{q^2}{ c'q^{2(j-i)(1-n)+2}x^2},
\label{eq:qdifferentialoperator3.gen}
\end{align}
where $\bm 0_k$ is just a vector of $k$ zeros.
\end{lemma}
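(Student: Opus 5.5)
The plan is to reduce Lemma~\ref{lem:qdiff.to.hypergeometric.gen} to Lemma~\ref{lem:qdiff.to.hypergeometric}. The reduction can be done either by a direct coefficient computation, or by a limiting argument in the zero parameters. I would carry out the direct computation, since it avoids justifying the interchange of limit and proportionality. It repeats the proof of \eqref{eq:qdifferentialoperator1} given above, keeping track separately of the zero parameters.

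First, for \eqref{eq:qdifferentialoperator1.gen}, expand the left-hand side using \eqref{eq:qdiff.1}:
\[
\sum_{k=0}^n \frac{\qraising{\bm a}{q}{k}}{\qraising{\bm b}{q}{k}\qraising{q}{q}{k}}(-1)^{(s-r+1)k}q^{(s-r+1)\binom{k}{2}}c^k\frac{\qraising{q}{q}{n}}{\qraising{q}{q}{n-k}}x^{n-k}.
\]
The zero parameters contribute $(0;q)_k=1$, so their only effect is through the prefactor $(-1)^{(s-r+1)k}q^{(s-r+1)\binom{k}{2}}$, where now $s-r=(s'-r')+(j-i)$.

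Next, reindex by $m=n-k$. Apply \eqref{qraising2.bis} to each nonzero $a_l$ and $b_l$ and to the factor $\qraising{q}{q}{k}$, exactly as before; this produces the parameters $\bm a^{-1}q^{1-n}$, $\bm b^{-1}q^{1-n}$ and $q^{-n}$ in $m$. The remaining sign–power factor $(-1)^{(j-i)k}q^{(j-i)\binom{k}{2}}$ must then be rewritten in terms of $m$ using
\[
\binom{n-m}{2}=\binom{n}{2}-\binom{m}{2}-m(n-1)+m^2-m+\binom{m}{2}\cdot 0,
\]
more precisely $\binom{n-m}{2}=\binom{n}{2}+\binom{m+1}{2}-nm$, which up to constants independent of $m$ gives $(-1)^{(j-i)m}q^{(j-i)\binom{m}{2}}q^{(j-i)(1-n)m}$. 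The factor $q^{(j-i)(1-n)m}$ is absorbed into the argument, producing the dilation $q^{(j-i)(1-n)}$ on top of $c'q$. The factor $(-1)^{(j-i)m}q^{(j-i)\binom{m}{2}}$ combines with the balance of the remaining parameters. This combination is the step I expect to be the main obstacle: matching the final count of zero entries and the exponent $(-1)^{(s-r)m}q^{(s-r)\binom m2}$ required by \eqref{def:qpolynomial1} for the stated type ${}_{r+s'+1}\phi_{s+r'}$. I would verify it by equating the exponent of $\binom{m}{2}$ on both sides, using that the numbers of upper and lower parameters differ by exactly the net number of zeros.

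Finally, \eqref{eq:qdifferentialoperator2.gen} and \eqref{eq:qdifferentialoperator3.gen} follow exactly as in the proof of Lemma~\ref{lem:qdiff.to.hypergeometric}. Use \eqref{eq:qdiff.2} (respectively \eqref{eq:qdiff.3}) to convert $\D_{q^2}$ (respectively $\D_q^2$ acting on $x^{2n}$) into $\D_q$ (respectively $\D_{q^2}$ acting on $t^n$). This adds the nonzero parameter $-q^{-n}$ (respectively $q^{1-2n}$) together with the compensating constant $-q^n$ (respectively $q^{2n-1}$), and then \eqref{eq:qdifferentialoperator1.gen} is applied with base $q$ (respectively $q^2$, where the dilation exponent doubles to $2(j-i)(1-n)$). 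As in the proof of \eqref{eq:qdifferentialoperator2} and \eqref{eq:qdifferentialoperator3}, the added parameter and the compensating constant cancel in $c'$.
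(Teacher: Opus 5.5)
Your proposal is correct and follows the route the paper indicates: the paper omits this proof, saying it repeats the computation for Lemma~\ref{lem:qdiff.to.hypergeometric} while tracking the zero parameters (with the limiting argument as an alternative). The step you flag as the main obstacle closes at once: for ${}_{r+s'+1}\phi_{s+r'}$ the required prefactor exponent is $(s+r')-(r+s'+1)+1=j-i$, which matches your leftover factor $(-1)^{(j-i)m}q^{(j-i)\binom{m}{2}}$, provided $i=r''$, $j=s''$ and $c'$ is read as $b_1\cdots b_{s'}/(c\,a_1\cdots a_{r'})$ over the nonzero parameters only.
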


We omit the proof, as it follows the same steps of the proof of Lemma \ref{lem:qdiff.to.hypergeometric}, just keeping track of the new terms introduced by the zeros. Alternatively, this result can be derived as a limit of Lemma \ref{lem:qdiff.to.hypergeometric}, by letting some parameters tend to zero.

\begin{remark}
Notice that on the left-hand side of \eqref{eq:qdifferentialoperator1} the vectors of zeros $\bm 0_i$ and $\bm 0_j$ may cancel each other. In practice, we need to consider the case when at least one of $i$ and $j$ is 0. The same goes for the right-hand side with $r$ and $s$. The reason we opted to include both vectors is to provide a unified treatment and avoid distinguishing four different cases in which the zeros might be above or below on each side of the equation. 
\end{remark}

 \section*{Acknowledgments}
The first author was partially supported by Junta de Andaluc\'{\i}a, Spain (research group FQM-229 and Instituto Interuniversitario Carlos I de F\'{\i}sica Te\'orica y Computacional).  The third author was partially supported by AMS-Simons travel grant.

\end{document}